\theoremstyle{plain}
\renewcommand*{\thesubfigure}{\roman{subfigure})}
\theoremstyle{plain}
\newtheorem{theorem}{Theorem}[section]
\newtheorem{proposition}[theorem]{Proposition}
\newtheorem{lemma}[theorem]{Lemma}
\theoremstyle{definition}
\newtheorem{defin}[theorem]{Definition}
\newtheorem{remark}[theorem]{Remark}
\theoremstyle{remark}
\font\manual=manfnt
\newcommand\xqed[1]{%
  \leavevmode\unskip\penalty9999 \hbox{}\nobreak\hfill
  \quad\hbox{#1}}
\newcommand\triang{\xqed{\manual\char'170}}
\numberwithin{equation}{section}
\def\dis{\displaystyle}
\DeclareMathOperator{\R}{\mathbb{R}}
\DeclareMathOperator{\N}{\mathbb{N}}
\newcommand{\car}[1]{\raise1pt\hbox{$\chi$}_{#1}}
\newcommand{\DM }{\mathcal{DM}^\infty }
\def\rn{\mathbb{R}^N}
\def\into{\int_{\Omega}}
\def\dH{\mathrm{d}\mathcal{H}^{N-1}}
\def\ae{\mathrm{a.e.}}
\def\re{\mathbb{R}}
\def\rn{\mathbb{R}^N}
\begin{document}
\title[Sub-supersolution and concave-convex problems]{The Sattinger iteration method for 1-Laplace type problems and its application to  concave-convex nonlinearities}
%\title[Sub-supersolution and concave-convex problems]{Sub-supersolution method for 1-Laplace type  problems and an application to a concave-convex problem}

\author[A. J. Mart\'inez Aparicio]{Antonio J. Mart\'inez Aparicio}
\author[F. Oliva]{Francescantonio Oliva}
\author[F. Petitta]{Francesco Petitta}

\address[Antonio J. Mart\'inez Aparicio]{Departamento de Matem\'aticas,
Universidad de Almer\'ia
	\hfill \break\indent
    Ctra. Sacramento s/n, La Ca\^{n}ada de San Urbano, 04120 Almer\'ia, Spain}
\email{\tt ajmaparicio@ual.es}

\address[Francescantonio Oliva]{
Dipartimento di Scienze di Base e Applicate per l' Ingegneria, Sapienza Universit\`a di Roma
	\hfill \break\indent
	Via Scarpa 16, 00161 Roma, Italy}
\email{\tt francescantonio.oliva@uniroma1.it}
\address[Francesco Petitta ]{Dipartimento di Scienze di Base e Applicate per l' Ingegneria, Sapienza Universit\`a di Roma
		\hfill \break\indent
		Via Scarpa 16, 00161 Roma, Italy}
\email{\tt francesco.petitta@uniroma1.it}

\keywords{$1$-Laplacian, Nonlinear elliptic equations, Singular elliptic equations, monotone iterations, sub-supersolutions} \subjclass[2020]{35J25, 35J60,  35J75, 35B51, 35A01, 47J25}

\begin{abstract}
In this paper  we extend the classical  sub-supersolution Sattinger iteration method to $1$-Laplace type boundary value  problems of the form 
\begin{equation*}
	\begin{cases}
		\dis -\Delta_1 u = F(x,u) & \text{in}\;\Omega,\\
		u=0 & \text{on}\;\partial\Omega,
	\end{cases}
\end{equation*}
where $\Omega$ is an open bounded domain of $\rn$ ($N\geq 2$) with Lipschitz boundary and $F(x,s)$ is a Carathe\'{o}dory function. This goal is achieved through a perturbation method that overcomes structural obstructions arising from the presence of the $1$-Laplacian and by proving a weak comparison principle for these problems.
As a significant application of our main result we establish existence and non-existence theorems for the so-called ``concave-convex'' problem involving the $1$-Laplacian as leading term. 
\end{abstract}

\maketitle
 
\tableofcontents

\section{Introduction}

In recent years, many authors have dealt with various aspects of problems involving the $1$-Laplacian operator. One of the most basic model of homogeneous Dirichlet problem associated to this operator reads as 
\begin{equation}
	\label{eq:PbIntro1}
	\begin{cases}
		\dis -\Delta_1 u = F(x,u) & \text{in}\;\Omega,\\
		u=0 & \text{on}\;\partial\Omega,
	\end{cases}
\end{equation}
where $\Omega\subset \R^N$ {$(N\geq 2)$} is a bounded open domain with Lipschitz boundary and $F(x,s)$ is  a Carathe\'{o}dory function for which we do not assume any sign or growth assumption. Here $\displaystyle \Delta_1 u := {\rm div}\left(|\nabla u|^{-1} \nabla u\right)$  represents the 1-Laplacian, i.e. the formal limit as $p\to 1^+$ of the usual $p$-Laplace operator $\displaystyle \Delta_p u := {\rm div}\left({|\nabla u|^{p-2}{\nabla u}}\right)$.

\medskip

The primary goal of this study is to introduce a sub- and supersolution method for \eqref{eq:PbIntro1} in its full generality. Apart from its simplicity, this method, when applicable, has some outstanding properties. For example, it usually allows more flexible requirements on the growth conditions of $F(x,s)$ than other methods, such as those from the calculus of variations. Furthermore, it also gives some ordering properties on the solutions.
\medskip

The sub-supersolution method, also known as the order method or the Sattinger method, is a powerful tool which has been widely used in many problems involving, for instance, the $p$-Laplacian ($p>1$) (\cite{sat,amman,mopo,BDGK}). With these operators, the strategy to prove this result is to add, when necessary, a non-decreasing function $l(x,s)$ on both sides of the equation in order to obtain a non-decreasing right-hand. Then, one can construct an iterative scheme with the sub- or the supersolution as a starting point that leads to a solution of the original problem. However, the case of the $1$-Laplacian is strikingly different from that of the $p$-Laplacian and the sub-supersolution method is far from being a trivial extension.
\medskip

On the one hand, it should be clear that a weak comparison principle is key for constructing an iterative scheme. Nevertheless, the non-uniqueness phenomena for solutions to
\begin{equation}
	\label{eq:PbIntro2}
	\begin{cases}
		\dis -\Delta_1 u = f(x) & \text{in}\;\Omega,\\
		u=0 & \text{on}\;\partial\Omega,
	\end{cases}
\end{equation}
even in the natural space $BV(\Omega)$ precludes the existence of a weak comparison principle. In fact, if $u_1,u_2\in BV(\Omega)$ are such that $-\Delta_1 u_1 \leq -\Delta_1 u_2$ ($+$ boundary data), in general one can not deduce that $u_1\leq u_2$ $\mathrm{a.e.}$ in $\Omega$.
\medskip

On the other hand, another additional difficulty when dealing with $1$-Laplace type problems relies on the fact that some smallness assumptions on the norm of $f$ are required in order to have existence of solutions for~\eqref{eq:PbIntro2}. Specifically, it is necessary that $\|f\|_{L^{N,\infty}(\Omega)} \le (\tilde{\mathcal{S}}_1)^{-1}$, where $\tilde{\mathcal{S}}_1$ denotes the best constant in the Sobolev embedding (see for instance \cite{CT, MST1}). This implies that, during the hypothetical iterative scheme, one would have to ask some smallness conditions on the norm of $F(x,s)$ evaluated in both the sub- and the supersolution and, in general, this is not feasible.
\medskip

Due to the difficulties mentioned above, for the $1$-Laplacian operator the approach of the $p$-Laplacian is not effective at first glance.
\medskip

In the current work, we manage to overcome these difficulties by adding some conditions on $l(x,s)$ that do not imply any restriction on the solution given by the method. Two are the key ingredients: the first one is that, as we shall prove, a weak comparison principle is available for the operator $-\Delta_1 u + l(x,u)$ when $l(x,s)$ is increasing. The second one is that, as we show following \cite{AnDaSe}, no smallness condition on $f\in L^N(\Omega)$ is required to have existence in the Dirichlet problem associated with the equation $-\Delta_1 u + l(x,u) = f$ when $\lim_{s\to\pm\infty} l(x,s) = \pm \infty$ uniformly $\mathrm{a.e.}$ in $x\in \Omega$. To the best of our knowledge, sub-supersolution results have only been established for a very specific case, essentially corresponding to $ F(x,s) = |u|^{q-1}u + \lambda $, with $ q > 1 $ (see \cite{Dem1}).

\medskip

As a second aim and after proving the above mentioned sub-supersolution result, we employ it to study existence and non-existence of solutions for problems whose model is given by
\begin{equation}
	\label{eq:PbIntro4}
	\begin{cases}
		\dis -\Delta_1 u = \frac{\lambda f(x)}{u^\gamma} + g(u) & \text{in}\;\Omega,\\
		u=0 & \text{on}\;\partial\Omega,
	\end{cases}
\end{equation}
where $\gamma>0$, $f\in L^{N,\infty}(\Omega)$ is a non-negative function, $g\colon [0,\infty)\to [0,\infty)$ is a continuous function with $g(0)=0$ and $\lambda>0$ is a parameter. These are the so-called ``concave-convex'' problems in the case of the 1-Laplacian (see~\cite{OrPiS}). We prove both existence for $\lambda$ small and non-existence for $\lambda$ large. 

\medskip
Some remarks on the terminology are in order. When talking about concave-convex problems with the $p$-Laplacian operator {($p>1$)}, the literature usually refers to problems whose model is
\begin{equation}
	\label{eq:PbIntro5}
	\begin{cases}
		\dis -\Delta_p u = h(x,u) & \text{in}\;\Omega,\\
		u=0 & \text{on}\;\partial\Omega,
	\end{cases}
\end{equation}
with $h(x,s) = \lambda s^\alpha + s^\beta$ for $0<\alpha< p-1 < \beta$. The name of these problems is given since, for $p=2$, the nonlinearity becomes the sum of a concave and a convex term. For general $p$, the nonlinearity is the addition of a sublinear and a superlinear term with respect to the growth $p-1$. On the other hand, other types of problems widely studied and that behave similarly are the ``singular convex-convex'' problems. Here, the model nonlinearity is $h(x,s) = \frac{\lambda}{s^\alpha} + s^\beta$ with $\alpha>0$ and $ p-1 < \beta$. Again, the lower order term is the sum of a sublinear and a superlinear term with respect to the growth $p-1$. Both types of problems become~\eqref{eq:PbIntro4} when $p=1$. 

\renewcommand*{\thesubfigure}{\tiny{}}

\begin{figure}[ht]
% \label{figure}
\captionsetup[subfigure]{justification=centering}
\centering
\begin{subfigure}[t]{.245\textwidth}
  \centering
  \includegraphics[scale=0.8]{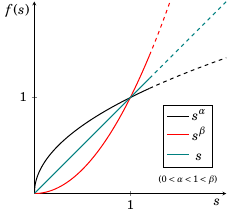}
  \caption{\hspace{4mm} \tiny{concave-convex \\ \hspace{4mm} nonlinearities $(p=2)$}}
\end{subfigure}%
\begin{subfigure}[t]{.245\textwidth}
  \centering
  \includegraphics[scale=0.8]{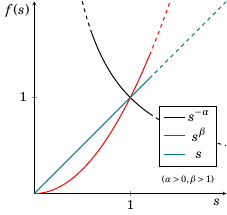}
  \caption{\hspace{4mm} \tiny{singular convex-convex\\
  \hspace{4mm} nonlinearities $(p=2)$}}
\end{subfigure}%
\begin{subfigure}[t]{.245\textwidth}
  \centering
  \includegraphics[scale=0.8]{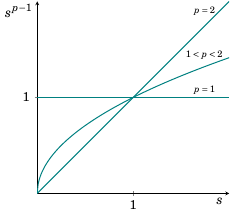}
  \caption{\hspace{4mm} \tiny{$p-1$-linear growth}}
\end{subfigure}
\begin{subfigure}[t]{.245\textwidth}
  \centering
  \includegraphics[scale=0.8]{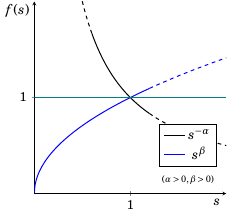}
  \caption{\hspace{4mm}\tiny{sub-superlinear 0-growth}}
\end{subfigure}
%\caption{To fill.}
\end{figure}%

With respect to the concave-convex problems, the authors in~\cite{BEP} proved using the Sattinger method that a solution to~\eqref{eq:PbIntro5} with $h(x,s) = \lambda s^\alpha + s^\beta$ and $0<\alpha< p-1 < \beta$ always exists for $\lambda$ small, independently of the domain and the growth of the superlinear term. This fact was somewhat surprising, as it was well known since the works of Pohožaev~\cite{Po}, for $p=2$,  and Pucci and Serrin~\cite{PuSe}, for $1<p<N$,  that for starshaped domains a solution to~\eqref{eq:PbIntro5} can not exist for any $\lambda>0$ if $h(x,s) = \lambda s^\beta$ with $\beta\geq p^*-1$. Strongly related to~\cite{BEP} is the seminal paper~\cite{ABC} in which, for $p=2$,  the authors presented a complete study of~\eqref{eq:PbIntro5}, including non-existence results for $\lambda$ large and, when $\beta<2^*-1$, multiplicity results for $\lambda$ small. This result was later extended to the $p$-Laplacian case in~\cite{GaPeMa}. For other relevant related works on this problem, we refer to \cite{dgu, BA} and references therein.
\medskip

Regarding the singular convex-convex problems, there is a lot of literature available. Among others, let us mention the paper~\cite{B} where the author uses the sub-supersolution method to find a solution for $\lambda$ small to~\eqref{eq:PbIntro5} with $h(x,s) = \frac{\lambda}{s^\alpha} + s^\beta$ for $\alpha>0$ and $ p-1 < \beta$. Both the sub- and the supersolution are obtained choosing the solutions to problem~\eqref{eq:PbIntro5} with $h(x,s) = \frac{\tilde\lambda}{s^\alpha}$ for two suitable $\tilde\lambda>0$. For further works dealing with multiplicity, we refer the reader to~\cite{CoPa, ArB, ArMM, GiScTa, HiSaSh}.

\medskip 

On the other hand, when $p=1$, the literature concerning problem \eqref{eq:PbIntro4} is less extensive. Indeed for problems involving $1$-Laplacian operator, most of the existing literature deals with the purely singular case $g\equiv 0$ given by
\begin{equation}
	\label{eq:PbIntro6}
	\begin{cases}
		\dis -\Delta_1 u = h(u)f(x) & \text{in}\;\Omega,\\
		u=0 & \text{on}\;\partial\Omega,
	\end{cases}
\end{equation}
with $f\in L^{N,\infty}(\Omega)$ and $h\colon (0,\infty) \to (0,\infty)$ continuous with $h(\infty):= \lim_{s\to \infty} h(s) < \infty$ and possibly unbounded when approaching to $0$. 
\medskip

As for the existence of solutions to \eqref{eq:PbIntro6}, it is usual to work by approximating it with  
$p$-Laplacian type problems, such as 
\begin{equation}
	\label{eq:PbIntro6bis}
	\begin{cases}
		\dis -\Delta_p u = h(u)f(x) & \text{in}\;\Omega,\\
		u=0 & \text{on}\;\partial\Omega.
	\end{cases}
\end{equation}

These latter problems \eqref{eq:PbIntro6bis} have been extensively studied by the community in the past decades. Just to give an idea, when $h(s) = s^{-\gamma}$ for some $\gamma>0$ and $f(x)$ is a smooth positive function, first~\cite{CrRaTa} and then~\cite{LM} exhaustively studied the existence of classical solutions when $p=2$. Important advances were made in~\cite{BO}, where also for $p=2$ the authors began to consider $f(x)$ merely integrable. This study was extended to the $p$-Laplacian case in \cite{DCA}. An important feature of this problem is that, even for $f(x)\equiv 1$, when $\gamma>\frac{2p-1}{p-1}$ weak solutions are not in the finite energy space (i.e. in $W_0^{1,p}(\Omega)$) but only in $W_\mathrm{loc}^{1,p}(\Omega)$. Moreover, solutions are only bounded if $f\in L^m(\Omega)$ with $m>\frac{N}{p}$. For an extensive study of problems as \eqref{eq:PbIntro6bis}, we refer the interested reader to the recent survey \cite{OPsur}.

\medskip

Coming back to the $1$-Laplacian problem \eqref{eq:PbIntro6}, when $h(s)\equiv 1$, it was shown in~\cite{CT} (see also~\cite{MST1}) that a $BV(\Omega)$ solution can exist only under some smallness assumptions on the datum, namely $\|f\|_{L^{N,\infty}(\Omega)} \le (\tilde{\mathcal{S}}_1)^{-1}$ with $\tilde{\mathcal{S}}_1$ being the best constant of the Sobolev embedding $BV(\Omega) \hookrightarrow L^{N/(N-1),1}(\Omega)$; moreover, if $\|f\|_{L^{N,\infty}(\Omega)} < (\tilde{\mathcal{S}}_1)^{-1}$, the solutions are identically zero.
\medskip

When $h$ is not constant and it is bounded, it was showed in~\cite{DGOP} that a $BV$-solution to~\eqref{eq:PbIntro6} exists if $\|f\|_{L^{N,\infty}(\Omega)} < (\tilde{\mathcal{S}}_1 h(\infty))^{-1}$. Then, if $h(\infty) = 0$, no smallness assumption in the data is required. Furthermore, when $h(s)$ behaves essentially as ${s^{-\gamma}}$ for some $\gamma>0$ near $0$, the authors showed that solution $u\in BV_{\rm loc}(\Omega)$ for the same threshold of $f$ exists satisfying $u^{\max\{1,\gamma\}} \in BV(\Omega)$. This regularity property has recently been improved in \cite{MAOP} showing that $u$ always belongs to $BV(\Omega)$ independently of $\gamma$. Moreover, if $f(x)$ is positive and $h(s)$ goes to infinity at 0 a regularizing effect appears, since all the solutions found in~\cite{DGOP} are positive.

\medskip

Then, once the above setting is well understood, problems as \eqref{eq:PbIntro4} can also be seen as a first natural extension of \eqref{eq:PbIntro4} to the case in which $h$ is actually unbounded at infinity. 
\medskip

Up to our knowledge, problem~\eqref{eq:PbIntro4} has only been studied in~\cite{OrPiS}, where the aim of the authors was to show multiplicity of solutions for $\lambda$ small. Due to the methods employed (essentially variational arguments) they have to add some conditions on the growth of the nonlinearities. In order to be concrete, they consider $\gamma<1$, $f(x)=1$,  and $g(u)=u^q$ with $1<q<\frac{N}{N-1}$.

\medskip 

In the current paper we show that the existence of a global $BV$ solution to~\eqref{eq:PbIntro4} always holds independently of $\gamma$ and $g(s)$ if $\lambda$ is required to be small enough. This existence result is suitably complemented by the non-existence result when $\lambda$ is large.

\medskip

The paper is organized as follows. In Section \ref{sec:prel} some preliminaries are presented with the tools used throughout this work, including a brief review of the 1-Laplace eigenvalue problem and the proofs of the comparison principles needed. Then, in Section \ref{sec:subsuper}, we show the sub-supersolution existence result for the $1$-Laplace Dirichlet problem~\eqref{eq:PbIntro1}.
Next, in Section \ref{sec:existence}, we apply this sub-supersolution method to prove the existence of a solution to the concave-convex problem~\eqref{eq:PbIntro4} for $\lambda$ small and we rely on the 1-Laplace eigenvalue problem to show the non-existence for $\lambda$ small. Finally, Appendices \ref{sec:app1} and \ref{sec:app2} are dedicated to some technical results used in the paper.

\section{Preliminaries}
\label{sec:prel}

Throughout this paper $\Omega$ is an open bounded subset of $\R^N$ ($N\geq 2$) with Lipschitz boundary. Given a set $E\subset \R^N$, $\mathcal{H}^{N-1}(\partial E)$ stands for the $(N-1)$-dimensional Hausdorff measure of the boundary of $E$ while $|E|$ denotes the classical $N$-dimensional Lebesgue measure of $E$.
\medskip

For a fixed $k>0$, $T_{k}$ and $G_{k}$ are the real functions defined by
$$
T_k(s):=\max (-k,\min (s,k)) \qquad \text{and} \qquad G_k(s):=(|s|-k)^+ \operatorname{sign}(s). 
$$
Note in particular that $T_k(s) + G_k(s)=s$ for any $s\in \mathbb{R}$. Moreover, for fixed $\delta>0$ we will also use the following auxiliary  function
\begin{align}\label{not:Vdelta}
V_{\delta}(s):=
\begin{cases}
1 \ \ &s\le \delta, \\
\displaystyle \frac{2\delta-s}{\delta} \ \ &\delta <s< 2\delta, \\
0 \ \ &s\ge 2\delta.
\end{cases}
\end{align}

As for the integrals, we use the following notation 
$$
\int_{\Omega} f := \int_{\Omega} f(x) \ \mathrm{d}x \qquad \text{and} \qquad \int_{\Omega} f\mu := \int_{\Omega} f(x) \ \mathrm{d}\mu,
$$
where $\mu$ is a general measure defined over $\Omega$.
\medskip

Let us explicitly stress that, for the sake of simplicity, by saying that a function $F\colon \Omega\times\R \to \R$ is increasing (non-decreasing) we mean that for $\ae$ $x\in \Omega$ it holds $F(x,s_1)<F(x,s_2)$ ($F(x,s_1)\le F(x,s_2)$) for every $s_1,s_2\in\R$ such that $s_1<s_2$. The reciprocal notation holds for decreasing and non-increasing functions.
\medskip

Finally, let us mention that we denote by $C$ several constants whose value may change from line to line. These values only depend on the data but they do not depend on the indexes of the involved sequences. We underline the use of the standard convention of not relabelling an extracted compact subsequence.

\subsection{Functions of bounded variation}

We denote by $\mathcal{M}(\Omega)$ the space of Radon measures with finite total variation over $\Omega$. Moreover, the space of functions of bounded  variation is denoted by
$$BV(\Omega):= \big\{ u\in L^1(\Omega) : Du \in \mathcal{M}(\Omega)^N \big\},$$
where $Du$ stands for the distributional gradient of $u$. We recall that $BV(\Omega)$ is endowed with the norm
\[
\|u\|=\int_\Omega|Du| + \int_\Omega |u|
\]
  where $\int_\Omega|Du|$ denotes the total variation of the measure $Du$ over $\Omega$; in particular it is a Banach space which is non-reflexive and non-separable.   Another important feature is that every $BV(\Omega)$ function has a  continuous trace operator into $L^1(\partial\Omega)$. This allows to define a norm equivalent to the previous one, namely
\[
\|u\|_{BV(\Omega)}=\int_\Omega|Du| + \int_{\partial\Omega} |u| \ \dH.
\]
Several times it will be used that these norms are lower semicontinuous in $BV(\Omega)$ with respect to the $L^1(\Omega)$ convergence. The same property holds for the functional $u\mapsto \int_\Omega \varphi|Du|$ with $0\leq \varphi\in C_c^1(\Omega)$ fixed.

\medskip
Recall that for $u \in L^1(\Omega)$, $u$ has an approximate limit at $x \in \Omega$ if there exists $\widetilde{u}(x)$ such that 
\begin{equation*}
    \lim_{\rho \downarrow 0} \fint_{B_{\rho}(x)} |u(y) - \widetilde{u}(x)| \, dy = 0,
\end{equation*}
where $\fint_E f = \frac{1}{|E|} \int_E f$ denotes the average integral over $E$. Such points are called \emph{Lebesgue points} of $u$, and the set of these points is denoted by $L_u$. The set where this property does not hold is denoted by $S_u$. This set is a $\mathcal{L}^N$-negligible Borel set \cite[Proposition 3.64]{AFP}.
\medskip

We say that $x$ is an \emph{approximate jump point} of $u$ if there exist $u^+(x) \neq u^-(x)$ and $\nu \in S^{N-1}$ such that
\begin{align*}
    \lim_{\rho \downarrow 0} \fint_{B^+_{\rho}(x, \nu)} |u(y) - u^+(x)| \, dy &= 0, \\
    \lim_{\rho \downarrow 0} \fint_{B^-_{\rho}(x, \nu)} |u(y) - u^-(x)| \, dy &= 0,
\end{align*}
where
\begin{align*}
    B^+_{\rho}(x, \nu) &= \{y \in B_{\rho}(x) : \langle y - x, \nu \rangle > 0\}, \\
    B^-_{\rho}(x, \nu) &= \{y \in B_{\rho}(x) : \langle y - x, \nu \rangle < 0\}.
\end{align*}

The set of approximate jump points is denoted by $J_u$. It is a Borel subset of $S_u$ \cite[Proposition 3.69]{AFP}, and $\mathcal{H}^{N-1}(S_u \setminus J_u) = 0$ if $u \in BV(\Omega)$. Moreover, up to an $\mathcal{H}^{N-1}$-negligible set, $J_u$ is an $\mathcal{H}^{N-1}$-rectifiable set, and an orientation $\nu_u(x)$ is defined for $\mathcal{H}^{N-1}$-almost every $x \in J_u$.
\medskip

The precise representative of $u$  is defined as the $\mathcal{H}^{N-1}$-a.e. finite function 
$u^*\colon \Omega\setminus(S_u\setminus J_u)\to \mathbb{R}$ by 
\begin{equation*}
u^*(x)=
\left\{
\begin{array}{lcr}
\tilde{u}(x)&\hbox{if}&x\in \Omega\setminus S_u,\\
\frac{u^++u^-}{2}&\hbox{if}& x\in J_u.
\end{array}
\right.
\end{equation*}

For further properties of functions of bounded variation, we refer the reader to~\cite{AFP}.
 
\medskip

The classic embedding $W_0^{1,1}(\Omega) \hookrightarrow L^{\frac{N}{N-1}}(\Omega)$ can be extended to the context of $BV(\Omega)$ functions by approximation (\cite{AFP}). This means that
\[
\|u\|_{L^{\frac{N}{N-1}}(\Omega)} \leq \mathcal{S}_1 \|u\|_{BV(\Omega)},
\]
where $\mathcal{S}_1=\big(N\omega_{N}^{\frac{1}{N}}\big)^{-1}$ with $\omega_{N}$ as the volume of the unit sphere of $\rn$ (see for instance \cite{talenti}).
\medskip

The embedding still holds true in the smaller Lorentz space $L^{\frac{N}{N-1},1}(\Omega)$, i.e.
\[
\|u\|_{L^{\frac{N}{N-1},1}(\Omega)} \leq \tilde{\mathcal{S}}_1 \|u\|_{BV(\Omega)},
\]
where $\tilde{\mathcal{S}}_1 = \big( (N-1)\omega_{N}^{\frac{1}{N}} \big)^{-1}$ (see~\cite{alvino}).

\medskip

Here we finally underline that an H\"{o}lder inequality is available for the Lorentz spaces, which are denoted by $L^{p,q}(\Omega)$ with $p>1$ and $q\in[1,\infty]$; in particular, the conjugate space associated with $L^{p,q}(\Omega)$ is given by $L^{p',q'}(\Omega)$. We refer the reader to~\cite{PKOF} for a more in-depth introduction to these spaces.

\subsection{$L^\infty$-divergence-measure vector fields}

When defining the concept of solution for 1-Laplacian type problems, the $L^\infty$-divergence-measure vector fields theory plays a key role. This theory began to be independently developed by~\cite{An} and~\cite{CF}. Let us start defining
\[
\DM(\Omega):= \big\{ z\in L^\infty(\Omega)^N : \operatorname{div}z \in \mathcal{M}(\Omega) \big\}.
\]

First, observe that, if $z\in \DM(\Omega)$, then $\operatorname{div}z$ can be shown to be absolutely continuous with respect to $\mathcal H^{N-1}$ (see~\cite[Proposition~3.1]{CF}). Moreover, a generalized dot product between the vector field $z\in \DM(\Omega)$ and the gradient $Du$ of a function $u\in BV(\Omega)$ can be formulated as
\begin{equation} \label{eq:Pairing}
	\langle(z,Du),\varphi\rangle:=-\int_\Omega u^*\varphi\operatorname{div}z-\int_\Omega
	uz\cdot\nabla\varphi, \,\quad \forall \varphi\in C_c^1(\Omega),
\end{equation}
which is well posed if, for instance, one of following compatibility conditions hold:
\begin{enumerate}[i)]
    \item $z\in \DM(\Omega)$ with $\operatorname{div}z \in L^N(\Omega)$ and $u\in BV(\Omega)$ (see \cite{An});
    \item $z\in \DM(\Omega)$ and $u\in BV(\Omega)\cap L^\infty(\Omega)$ (see \cite{Ca}).
\end{enumerate}
\medskip

Furthermore, under the above conditions, $(z,Du)$ is a Radon measure having finite total variation and for every Borel set $B$ with $B\subseteq U \subseteq \Omega$ ($U$ open) it holds
\begin{equation*} %\label{ec:2}
    \left| \int_B (z, Du) \right| \leq \int_B |(z, Du)| \leq \|z\|_{L^{\infty}(U)^N} \int_B |Du|.
\end{equation*}

Therefore, $(z,Du)$ is absolutely continuous with respect to the measure $|Du|$. We denote by \(\theta(z, Du, \cdot)\)
the Radon-Nikodym derivative of $(z, Du)$ with respect to $|Du|$, so it holds
\begin{equation*}
	(z, Du) = \theta(z,Du,x) \, {|Du|} \text{ as measures in } \Omega.
\end{equation*}

This derivative behaves well with respect to the composition, as the following result shows.
\begin{lemma}{\cite[Proposition $4.5$]{CrDec}}
	\label{lem:Composition}
	Let $z\in \mathcal{D}\mathcal{M}(\Omega)$ and $u\in BV(\Omega) \cap L^\infty(\Omega)$. Let $\Lambda\colon \R\to\R$ be a non-decreasing locally Lipschitz function. Then
	\[
	\theta(z, D\Lambda(u),x) = \theta(z, Du,x) \quad \text{for } |D\Lambda(u)|\text{-}{\rm a.e.}\ x \in \Omega.
	\]
	As a consequence, $(z,Du)=|Du|$ as measures implies $(z,D\Lambda(u))= |D\Lambda(u)|$ as measures.
\end{lemma}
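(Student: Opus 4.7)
The plan is to first reduce to a simpler class of $\Lambda$ and then decompose the measures $Du$ and $D\Lambda(u)$ according to the standard $BV$ decomposition, checking the identity $|D\Lambda(u)|$-a.e. on each piece.

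Since $u\in L^\infty(\Omega)$, I would first reduce to the case where $\Lambda\colon\R\to\R$ is globally Lipschitz and non-decreasing on all of $\R$. Indeed, modifying $\Lambda$ outside the interval $[-\|u\|_\infty,\|u\|_\infty]$ does not affect $\Lambda(u)$ nor $D\Lambda(u)$, and one can choose the modification to be affine with slope equal to the local Lipschitz constant of $\Lambda$ there. In particular, denoting by $L$ such a constant, one has $\Lambda(u)\in BV(\Omega)\cap L^\infty(\Omega)$ with $|D\Lambda(u)|\leq L\,|Du|$, so that $|D\Lambda(u)|$ is absolutely continuous with respect to $|Du|$ and both pairings $(z,Du)$ and $(z,D\Lambda(u))$ are well-defined by condition \emph{ii)} in the discussion preceding the statement.

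Next, I would invoke the Vol'pert--Ambrosio--Dal Maso chain rule to write
\[
D\Lambda(u)=\Lambda'(\tilde u)\bigl(D^{a}u+D^{c}u\bigr)+\bigl(\Lambda(u^{+})-\Lambda(u^{-})\bigr)\nu_{u}\,\mathcal{H}^{N-1}\res J_{u},
\]
and the analogous decomposition of $Du$. Since $\Lambda$ is non-decreasing, $\Lambda'(\tilde u)\geq 0$ and $\Lambda(u^+)-\Lambda(u^-)$ has the same sign as $u^+-u^-$, so the polar vector $\sigma_{\Lambda(u)}:=\mathrm{d}D\Lambda(u)/\mathrm{d}|D\Lambda(u)|$ coincides with $\sigma_u:=\mathrm{d}Du/\mathrm{d}|Du|$ at $|D\Lambda(u)|$-a.e. point. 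The identity $\theta(z,D\Lambda(u),x)=\theta(z,Du,x)$ would then be checked piece by piece: on the absolutely continuous part, both densities equal $z(x)\cdot\nabla u(x)/|\nabla u(x)|$ since $\nabla(\Lambda(u))=\Lambda'(u)\nabla u$ and $\Lambda'(u)\geq 0$ cancels; on the jump part, one uses the explicit representation of $\theta$ through normal traces of $z$ on the $(N-1)$-rectifiable jump set together with the fact that $J_{\Lambda(u)}\subseteq J_u$ with $\nu_{\Lambda(u)}=\nu_u$ on $J_{\Lambda(u)}$.

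The main obstacle is the Cantor part, where $\theta$ has no pointwise representative. Here I would proceed by approximation: pick non-decreasing $\Lambda_{n}\in C^{1}(\R)$ converging uniformly to $\Lambda$ on the range of $u$ with $\Lambda_n'$ uniformly bounded, so that $\Lambda_n(u)\to\Lambda(u)$ strictly in $BV$. For $C^{1}$ $\Lambda_n$, the Leibniz formula applied to $\varphi\in C_c^1(\Omega)$ in the definition \eqref{eq:Pairing} together with the identity $D\Lambda_n(u)=\Lambda_n'(\tilde u)Du$ on the diffuse part allows one to write $(z,D\Lambda_n(u))=\Lambda_n'(\tilde u)(z,Du)$ on $\Omega\setminus J_u$; dividing by $|D\Lambda_n(u)|=\Lambda_n'(\tilde u)|Du|$ one gets $\theta(z,D\Lambda_n(u),\cdot)=\theta(z,Du,\cdot)$ there. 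Passing to the limit $n\to\infty$ and using dominated convergence for the $\varphi$-integrals, together with lower semicontinuity of the total variation, yields the identity for $\Lambda$ on the Cantor component.

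Finally, the consequence is immediate: if $(z,Du)=|Du|$, i.e. $\theta(z,Du,x)=1$ for $|Du|$-a.e. $x$, then since $|D\Lambda(u)|\ll|Du|$ the same identity holds $|D\Lambda(u)|$-a.e., whence $\theta(z,D\Lambda(u),x)=1$ and $(z,D\Lambda(u))=|D\Lambda(u)|$ as measures.
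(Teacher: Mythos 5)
First, note that the paper does not prove this lemma: it is quoted verbatim from \cite[Proposition 4.5]{CrDec}, so your attempt is really an attempt to reprove that cited result, and must be judged on its own. Your overall skeleton is the natural one, and several pieces are sound: the reduction to a globally Lipschitz non-decreasing $\Lambda$ using $u\in L^\infty(\Omega)$, the chain-rule decomposition of $D\Lambda(u)$, the observation that monotonicity forces the polar vectors of $D\Lambda(u)$ and $Du$ to coincide $|D\Lambda(u)|$-a.e. (together with $|D\Lambda(u)|\le L|Du|$), and the final deduction of the ``consequence'' from the density identity are all correct.

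The genuine gap is exactly at the point you yourself call the main obstacle, and your proposed fix does not close it. The asserted identity $(z,D\Lambda_n(u))=\Lambda_n'(\tilde u)\,(z,Du)$ on $\Omega\setminus J_u$ for smooth $\Lambda_n$ \emph{is} the chain-rule/locality property of the Anzellotti pairing whose proof is the substance of \cite[Proposition 4.5]{CrDec}; it cannot be obtained by ``applying the Leibniz formula'' to \eqref{eq:Pairing}, because the pairing is defined only distributionally through $-\int_\Omega u^*\varphi\operatorname{div}z-\int_\Omega u\,z\cdot\nabla\varphi$ with $z$ merely bounded, and there is no mechanism for factoring $\Lambda_n'(\tilde u)$ out of these integrals. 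If instead one tries the standard approximation $u_\varepsilon=u\ast\rho_\varepsilon$, for which $z\cdot\nabla u_\varepsilon\,\mathcal{L}^N\rightharpoonup (z,Du)$, one is left with $z\cdot\nabla\Lambda_n(u_\varepsilon)=\Lambda_n'(u_\varepsilon)\,z\cdot\nabla u_\varepsilon$, and passing the varying factor $\Lambda_n'(u_\varepsilon)$ to the limit against a weakly-$*$ convergent sequence of measures is precisely the delicate step (note also that the limit of $z\cdot\nabla\Lambda_n(u_\varepsilon)\,\mathcal{L}^N$ differs from $(z,D\Lambda_n(u))$ on $J_u$, since $(\Lambda_n(u))^*\neq\Lambda_n(u^*)$ at jump points). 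The known proofs handle the diffuse (in particular Cantor) part through representation formulas for the density $\theta$ obtained by blow-up, coarea and locality arguments, which is exactly what the citation supplies and what your sketch implicitly assumes. Two further points are stated without justification but are at least available in the literature: that the absolutely continuous part of $(z,Dv)$ is $z\cdot\nabla v\,\mathcal{L}^N$, and the representation of the jump part through averaged weak normal traces; in addition, your limit $n\to\infty$ needs both the fine fact that the diffuse part of $|Du|$ does not charge $\tilde u^{-1}(N)$ for $\mathcal{L}^1$-null sets $N$ (so that $\Lambda_n'(\tilde u)\to\Lambda'(\tilde u)$ $|\widetilde{D}u|$-a.e.), and an argument for why restrictions to $\Omega\setminus J_u$ pass to the limit, neither of which is addressed. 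As it stands, the Cantor-part step is asserted rather than proven, so the proposal does not constitute an independent proof; citing \cite[Proposition 4.5]{CrDec}, as the paper does, remains the appropriate course.
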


Since $\Omega$ has a Lipschiz boundary, the outward normal unit vector $\nu(x)$ is defined for $\mathcal H^{N-1}$-almost every $x\in\partial\Omega$. In \cite{An}, it is shown that every $z \in \mathcal{DM}^{\infty}(\Omega)$ possesses
a weak trace on $\partial \Omega$ of the
normal component of $z$ which is denoted by
$[z, \nu]$. This notion of weak trace generalizes the classical one, in the sense that $[z, \nu] = z\cdot \nu$ on $\partial\Omega$ if $z\in C^1(\overline{\Omega}, \R^N)$. Moreover, it verifies
\begin{equation*} %\label{des1}
	\|[z,\nu]\|_{L^\infty(\partial\Omega)}\le \|z\|_{L^\infty(\Omega)^N}.
\end{equation*}

Whenever a compatibility condition holds, a Green formula involving the measure $(z,Du)$ and the weak trace $[z,\nu]$ is verified (see~\cite{An} or~\cite{Ca}), namely
\begin{equation} \label{eq:Green}
	\int_{\Omega} u^* \operatorname{div}z + \int_{\Omega} (z, Du) =
	\int_{\partial \Omega} u[z, \nu] \ \dH.
\end{equation}

\subsection{1-Laplace eigenvalue problem}
\label{sec:eigenvalue}

Let us consider the following minimization problem
\[
h_1(\Omega):=\inf_{E\subset \overline\Omega} \frac{P(E,\R^N)}{|E|},
\]
where $P(E,\R^N)$ is the perimeter in the distributional sense measured with respect to $\R^N$. 
\medskip

Recall that a set $E\subset \R^N$ is said to have finite perimeter in $\R^N$ if the characteristic function $\chi_E$ belongs to $ BV(\R^N)$ and, in such case, $P(E,\R^N)=\int_{\R^N} |D\chi_E|$ (see~\cite{AFP}). The value $h_1(\Omega)$ is called Cheeger constant of $\Omega$ while a set $C\subset \overline{\Omega}$ which minimizes the previous infimum, i.e., a set such that
\[
h_1(\Omega) = \frac{P(C,\R^N)}{|C|} ,
\]
is called Cheeger set for $\Omega$. 
The existence of at least one Cheeger set can be proved while, in general, they are not unique; indeed the uniqueness of the Cheeger set depends strongly on the geometry of the domain. For instance, it is known that convex domains admit a unique Cheeger set. For a gentle introduction to this topic we refer the reader to~\cite{Pa}.
\medskip

Let us now focus on the connection of the Cheeger problem with the first eigenvalue problem for the $1$-Laplacian operator in $\Omega$ with Dirichlet boundary conditions, which is defined as
\begin{equation*}
% \label{eq:First_eigenvalue}
\lambda_1(\Omega) := \inf_{0\neq \phi\in W^{1,1}_0(\Omega)} \frac{\into |\nabla \phi|}{\into |\phi|} = \inf_{0\neq \phi\in BV(\Omega)} \frac{\into |D\phi| + \int_{\partial\Omega} |\phi| \ \dH}{\into |\phi|}.
\end{equation*}

This infimum is achieved in $BV(\Omega)$ and, moreover, it can be shown that $\lambda_1(\Omega) = h_1(\Omega)$. Thus, among the minimizers (eigenfunctions) one can always find characteristic functions $\chi_C$ associated with any Cheeger set $C$. All the minimizers of~\eqref{eq:PbEigen} satisfy, in a suitable generalized sense (see~\cite[Corollary~4.18]{KaSch} for further details), the Dirichlet problem
\begin{equation*}
	\begin{cases}
		\dis -\Delta_1 \phi  = \lambda_1 \frac{\phi}{|\phi|}  & \text{in}\;\Omega,\\
		\phi=0 & \text{on}\;\partial\Omega.
	\end{cases}
\end{equation*} 
Observe that $\frac{\phi}{|\phi|}$ is just the sign function, which has to be interpreted as a multivalued monotone graph. In~\cite{Dem2}, it is shown that among all the minimizers, there exists a non-negative one  formally satisfying  
\begin{equation*}
	\begin{cases}
		\dis -\Delta_1 \phi  = \lambda_1  & \text{in}\;\Omega,\\
		\phi=0 & \text{on}\;\partial\Omega,
	\end{cases}
\end{equation*} 
in the sense  that there exists some $z_\phi\in L^\infty(\Omega)^N$ with $\|z_\phi\|_{L^\infty(\Omega)^N}\leq 1$ such that  (see also Remark \ref{remphi} below for further insights) 
\begin{equation}\label{eq:PbEigen}
	\begin{cases}
		-\operatorname{div} z_\phi = \lambda_1,\\
		(z_\phi, D\phi) = |D\phi| \text{ as measures in } \Omega,\\
		\phi(1+[z_\phi,\nu])=0 \text{ $\mathcal{H}^{N-1}$-a.e. on } \partial\Omega.
	\end{cases}
\end{equation} 

When $\Omega$ is a self-Cheeger set,  i.e., when $\lambda_1 = \frac{P(\Omega,\re^N)}{|\Omega|}$,  by~\cite[Corollary~4.18]{KaSch} constant positive functions are immediately eigenfunctions solving~\eqref{eq:PbEigen}. For our later purposes, it is important to recall that, in particular, balls are self-Cheeger sets  and thus when $\Omega$ is a ball any positive constant is a solution to~\eqref{eq:PbEigen}. In this specific case, $\lambda_1 = \frac{N}{R}$ where $R$ is the radius of the ball.

\subsection{Comparison principles}

To present a general sub-supersolution method it is crucial to have comparison results between solutions. Following~\cite{Dem1}, we show that under certain conditions some comparison principles still hold for solutions to equations involving the $1$-Laplacian operator. For ease of notation, we introduce the next definition.

\begin{defin}\label{def1h}
    A function $u\in BV(\Omega)$ is said to be almost 1-harmonic in $\Omega$ if there exists some $z\in L^\infty(\Omega)^N$ with $\|z\|_{L^\infty(\Omega)^N}\leq 1$ and $\operatorname{div} z\in L^N(\Omega)$  such that $(z,Du)=|Du|$ as measures in $\Omega$, and it holds ${|u|+ u[z,\nu]=0}$ $\mathcal{H}^{N-1}$-$\ae$ on $\partial\Omega$. 
\end{defin}

\begin{remark}\label{remphi}
Some comments on the previous  definition are needed.  As it is nowadays well known, in order to solve problems as
\begin{equation}
	\label{eq:Pb1}
	\begin{cases}
		 \displaystyle -{\rm div} \left(\frac{D u}{|D u|} \right)= f & \text{in}\;\Omega,\\
		u=0 & \text{on}\;\partial\Omega,
	\end{cases}
\end{equation}
with $f$ in $L^N(\Omega)$, the main issue relies in giving the right meaning to the singular quotient $D u|D u|^{-1}$. Since Anzellotti's work (\cite{An}, see also \cite{ABCM}) this can be done by asking for the existence of a bounded vector field $z$ that mimics the role of $D u|D u|^{-1}$ in \eqref{eq:Pb1}. This is how to interpret the role of $z$ in Definition \ref{def1h}. 

On the other hand, it is also  well known that problem \eqref{eq:Pb1} has a BV-solution only for small data in the sense of the $L^N$-norm (\cite{CT, MST2}); this is a way to think about the terminology {\it almost 1-harmonic}.\triang  
\end{remark}

Here is our  first comparison result. The arguments are essentially the ones given in~\cite[Corollary~3]{Dem1}, where a technical density result is used, namely Proposition~\ref{prop:MP_Dens} below.

\begin{theorem}
\label{th:MP_Nondecr}
    Let $l(x,s)$ be a Carathe\'{o}dory function non-decreasing in $s\in \R$ and let $f_1,f_2\in L^N(\Omega)$. If $u_1,u_2\in BV(\Omega)$ are almost 1-harmonic functions in $\Omega$ such that
    \begin{align}
        \label{eq:Proof_MP_3}
        -\operatorname{div} z_1 + l(x,u_1) &= f_1 \quad \text{ in } \mathcal{D}'(\Omega),\\
        \label{eq:Proof_MP_4}
        -\operatorname{div} z_2 + l(x,u_2) &= f_2 \quad \text{ in } \mathcal{D}'(\Omega)
    \end{align}
    with $f_1<f_2$ in the set $\{u_1>u_2\}$, then $u_1\leq u_2$ $\mathrm{a.e.}$ in $\Omega$.
\end{theorem}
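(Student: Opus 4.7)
The plan is to subtract the two equations, test with a $BV(\Omega)$ approximation of $\chi_{\{u_1>u_2\}}$, and use the Anzellotti pairing to reach a sign contradiction unless this set is negligible, following the scheme of \cite{Dem1}. Subtracting \eqref{eq:Proof_MP_3}--\eqref{eq:Proof_MP_4} in $\mathcal{D}'(\Omega)$ gives
\[
-\operatorname{div}(z_1-z_2) + \bigl(l(x,u_1)-l(x,u_2)\bigr) = f_1 - f_2,
\]
and, since $\operatorname{div} z_i\in L^N(\Omega)$ by Definition~\ref{def1h} and $f_i\in L^N(\Omega)$, all terms belong to $L^N(\Omega)$. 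As a test function I take
\[
\varphi_k := \frac{1}{k}\,T_k\!\bigl((u_1-u_2)^+\bigr)\in BV(\Omega)\cap L^\infty(\Omega),\qquad k>0,
\]
a non-decreasing locally Lipschitz function of $u_1-u_2$ taking values in $[0,1]$, vanishing on $\{u_1\le u_2\}$, and converging pointwise to $\chi_{\{u_1>u_2\}}$ as $k\to 0^+$. Applying Green's formula \eqref{eq:Green} (whose legitimacy for such a $\varphi_k$ is granted by Proposition~\ref{prop:MP_Dens}) with $z=z_1-z_2$ and $u=\varphi_k$ produces
\begin{equation*}
\int_\Omega (z_1-z_2,\, D\varphi_k) + \int_\Omega \bigl(l(x,u_1)-l(x,u_2)\bigr)\varphi_k^* = \int_\Omega (f_1-f_2)\varphi_k^* + \int_{\partial\Omega} \varphi_k\,[z_1-z_2,\nu]\,\dH.
\end{equation*}

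The first key step is checking that the left-hand side is nonnegative. The second term is so because $\varphi_k\ge 0$ is supported in $\{u_1>u_2\}$ and there $l(x,u_1)\ge l(x,u_2)$ by monotonicity of $l$. For the first term I expand, using linearity of the pairing together with $(z_i,Du_i)=|Du_i|$,
\[
(z_1-z_2,D(u_1-u_2)) = |Du_1|+|Du_2|-(z_1,Du_2)-(z_2,Du_1),
\]
and then invoke the bound $(z_i,Du_j)\le\|z_i\|_\infty|Du_j|\le|Du_j|$ to deduce $(z_1-z_2,D(u_1-u_2))\ge 0$ as Radon measures; equivalently, $\theta(z_1,D(u_1-u_2))\ge\theta(z_2,D(u_1-u_2))$ holds $|D(u_1-u_2)|$-almost everywhere. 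Since $\varphi_k=\Lambda(u_1-u_2)$ for the non-decreasing locally Lipschitz $\Lambda(s)=T_k(s^+)/k$ (composing with a truncation of $u_1-u_2$ at a level $M>k$ to fit the $BV\cap L^\infty$ framework of the lemma), Lemma~\ref{lem:Composition} identifies $\theta(z_i,D\varphi_k)$ with $\theta(z_i,D(u_1-u_2))$ for $|D\varphi_k|$-a.e.\ $x$, and consequently $(z_1-z_2,D\varphi_k)\ge 0$ as a measure.

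The second key step is passing to the limit $k\to 0^+$ on the right-hand side. By dominated convergence and the strict hypothesis $f_1<f_2$ on $\{u_1>u_2\}$, the interior integral converges to $\int_{\{u_1>u_2\}}(f_1-f_2)<0$ as soon as $|\{u_1>u_2\}|>0$. For the boundary term I exploit the almost 1-harmonic identity $|u_i|+u_i[z_i,\nu]=0$ $\mathcal{H}^{N-1}$-a.e.\ on $\partial\Omega$, which forces $[z_i,\nu]=-1$ where $u_i>0$ and $[z_i,\nu]=+1$ where $u_i<0$. A case-by-case analysis on $\{u_1>u_2\}\cap\partial\Omega$, splitting according to the signs of the traces of $u_1$ and $u_2$, shows that in every situation $[z_1-z_2,\nu]\le 0$, so the boundary integral is nonpositive for every $k>0$. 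Hence, if $|\{u_1>u_2\}|>0$ the right-hand side tends to a strictly negative number while the left-hand side remains nonnegative; the contradiction yields $u_1\le u_2$ a.e.\ in $\Omega$. The main obstacle in carrying this through rigorously is precisely the boundary step: the weak normal trace $[z_i,\nu]$ is not pointwise defined in the classical sense and the Dirichlet condition is available only $\mathcal{H}^{N-1}$-a.e., so the sign comparison must be extracted entirely from the almost 1-harmonic relation rather than by direct evaluation.
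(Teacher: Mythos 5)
Your route is genuinely different from the paper's: you work directly at the $BV$ level with Anzellotti pairings and the weak normal traces, whereas the paper never does — its key inequality \eqref{eq:Proof_MP_1} is inherited from the $W_0^{1,1+\varepsilon}$ approximations supplied by Proposition~\ref{prop:MP_Dens}, where the monotonicity of the $(1+\varepsilon)$-Laplacian is elementary, and the passage to the limit uses only $\operatorname{div}z_{i,\varepsilon}\to\operatorname{div}\tilde z_i$ in $L^N(\Omega)$; moreover, since the equations are a.e.\ identities between $L^N(\Omega)$ functions, the paper simply multiplies them by $T_k(u_1-u_2)^+$ and integrates, so no Green formula and no boundary terms ever enter. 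The trouble is that your central step is exactly the one this detour is designed to bypass, and as written it has a genuine gap. Your proof of $(z_1-z_2,D(u_1-u_2))\ge 0$ is fine (bilinearity of the pairing is legitimate here because $\operatorname{div}z_i\in L^N(\Omega)$, and $|(z_i,Du_j)|\le |Du_j|$), but transferring this sign to $D\varphi_k$ through Lemma~\ref{lem:Composition} is not: the lemma is stated for $u\in BV(\Omega)\cap L^\infty(\Omega)$, and $u_1-u_2$ need not be bounded. The truncation device does not repair this. Applying the lemma to $T_M(u_1-u_2)$ identifies $\theta(z_i,D\varphi_k,\cdot)$ with $\theta(z_i,DT_M(u_1-u_2),\cdot)$, \emph{not} with $\theta(z_i,D(u_1-u_2),\cdot)$, and the ordering $\theta(z_1,DT_M(u_1-u_2),\cdot)\ge\theta(z_2,DT_M(u_1-u_2),\cdot)$ on the support of $|D\varphi_k|$ is precisely what remains unproven; to pass from $D(u_1-u_2)$ to $DT_M(u_1-u_2)$ you would need the composition result for the unbounded function $u_1-u_2$ (or a locality property of $\theta$ from the finer Crasta--De Cicco theory), i.e.\ the very statement you set out to avoid. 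So the assertion ``consequently $(z_1-z_2,D\varphi_k)\ge 0$'' is unjustified with the tools quoted in the paper.

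The rest of your argument is essentially sound, with minor caveats: Green's formula for $\varphi_k$ is legitimate because of Anzellotti's compatibility condition (here $\operatorname{div}(z_1-z_2)\in L^N(\Omega)$ and $\varphi_k\in BV(\Omega)\cap L^\infty(\Omega)$), not ``by Proposition~\ref{prop:MP_Dens}''; the absorption term is integrable since $l(x,u_i)=f_i+\operatorname{div}z_i\in L^N(\Omega)$; and the boundary case analysis is correct, though it tacitly uses that the trace of $\Lambda(u_1-u_2)$ equals $\Lambda$ of the difference of the traces. Note, finally, that only the integrated inequality $\int_\Omega(z_1-z_2,D\varphi_k)\ge 0$ is needed, and this (in unnormalized form) is exactly what Proposition~\ref{prop:MP_Dens} delivers; if you replace your first key step by that approximation argument and test the $L^N$-identities directly, the whole boundary-trace discussion becomes unnecessary and you recover the paper's proof.
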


\begin{proof}
Let $u_{1,\varepsilon}$ and $u_{2,\varepsilon}$ be the two sequences given by Proposition~\ref{prop:MP_Dens} to approximate $u_1$ and $u_2$ respectively, and let $\tilde z_1$ and $\tilde z_2$ be the vector fields given by that result. We denote $z_{i,\varepsilon} = |\nabla u_{i,\varepsilon}|^{\varepsilon-1} \nabla u_{i,\varepsilon}$ for $i=1,2$. We first prove that
\begin{equation}
\label{eq:Proof_MP_1}
    -\int_\Omega T_k(u_1-u_2)^+ (\operatorname{div}\tilde z_1-\operatorname{div}\tilde z_2) \geq 0.
\end{equation}

Due to the known inequality
\[
(z_{1,\varepsilon} - z_{2,\varepsilon}) ( \nabla u_{1,\varepsilon} - \nabla u_{2,\varepsilon} )\geq 0,
\]
we deduce that
\[
(z_{1,\varepsilon} - z_{2,\varepsilon}) \nabla T_k(u_{1,\varepsilon} - u_{2,\varepsilon})^+ \geq 0.
\]
Then we get
\begin{equation}
\label{eq:Proof_MP_2}
    -\int_\Omega T_k(u_{1,\varepsilon} - u_{2,\varepsilon})^+ ( \operatorname{div} (z_{1,\varepsilon}) - \operatorname{div} (z_{2,\varepsilon})) = \int_\Omega (z_{1,\varepsilon} - z_{2,\varepsilon}) \nabla T_k(u_{1,\varepsilon} - u_{2,\varepsilon})^+ \geq 0.
\end{equation}

Now, using that $(u_{1,\varepsilon} - u_{2,\varepsilon})^+ \to (u_1-u_2)^+$ in $L^\frac{N}{N-1}(\Omega)$ and that $\operatorname{div}(z_{1,\varepsilon}) - \operatorname{div}(z_{2,\varepsilon}) \to \operatorname{div} \tilde z_1 - \operatorname{div} \tilde z_2$ in $L^N(\Omega)$, we can take the limit in~\eqref{eq:Proof_MP_2} to deduce~\eqref{eq:Proof_MP_1}.

Next, we take $T_k(u_1-u_2)^+$ as test function in~\eqref{eq:Proof_MP_3} and~\eqref{eq:Proof_MP_4} and we subtract the equality resulting from~\eqref{eq:Proof_MP_4} to~\eqref{eq:Proof_MP_3}. Taking into account that $\operatorname{div} \tilde z_i = \operatorname{div} z_i$ for $i=1,2$, we obtain that
\[
-\int_\Omega T_k(u_1-u_2)^+ (\operatorname{div} \tilde z_1-\operatorname{div} \tilde z_2) + \int_\Omega (l(x,u_1)-l(x,u_2)) T_k(u_1-u_2)^+ = \int_\Omega (f_1-f_2) T_k(u_1-u_2)^+.
\]
Using~\eqref{eq:Proof_MP_1}, that $l(x,s)$ is non-decreasing in $s$ and that $f_1<f_2$ in $\{u_1>u_2\}$, we deduce that
\[
\int_\Omega (f_1-f_2)T_k(u_1-u_2)^+ = 0, \forall k>0.
\]

Since $u_1,u_2\in BV(\Omega)$, one can use Lebesgue Theorem to pass to the limit when $k\to\infty$ and obtain
\[
\int_\Omega (f_1-f_2)(u_1-u_2)^+ = 0.
\]
This implies that $f_1=f_2$ in the set $\{u_1>u_2\}$. Since $f_1<f_2$ in $\{u_1>u_2\}$ by hypothesis, we conclude that the measure of the set $\{u_1>u_2\}$ is zero and thus $u_1\leq u_2$ $\mathrm{a.e.}$ in $\Omega$.
\end{proof}

\begin{remark}
Observe that $l(x,s)$ is allowed to be identically zero. For $l(x,s)=0$,  in fact, a general comparison result is shown to hold in~\cite[Corollary~3]{Dem1} under the stronger hypothesis $f_1<f_2$ in $\Omega$. As Theorem~\ref{th:MP_Nondecr} makes clear, it is only necessary to assume $f_1<f_2$ in $\{u_1>u_2\}$. This small detail will be key for comparing solutions in the proof of Proposition~\ref{prop:Approx}. \triang
\end{remark}

Next, we show that the previous comparison principle can be improved if we add an stronger assumption on $l(x,s)$.
\begin{theorem}
\label{th:MP_Incr}
    Let $l(x,s)$ be a Carathe\'{o}dory function increasing in $s\in \R$. If $u_1,u_2\in BV(\Omega)$ are two almost 1-harmonic functions in $\Omega$ such that $l(x,u_1), l(x,u_2)\in L^1(\Omega)$ and
    \begin{align}
        \label{eq:Proof_MP_6}
        -\operatorname{div} z_1 + l(x,u_1) \leq
        -\operatorname{div} z_2 + l(x,u_2) \quad \text{ in } \mathcal{D}'(\Omega),
    \end{align}
    then $u_1\leq u_2$ $\mathrm{a.e.}$ in $\Omega$.
\end{theorem}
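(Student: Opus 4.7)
The plan is to mimic the proof of Theorem \ref{th:MP_Nondecr}, with the \emph{strict} monotonicity of $l(x,\cdot)$ playing the role previously taken by the strict inequality $f_1<f_2$ on $\{u_1>u_2\}$. As a first step I would invoke Proposition \ref{prop:MP_Dens} to produce approximating sequences $u_{1,\varepsilon}, u_{2,\varepsilon}$ together with vector fields $\tilde z_1,\tilde z_2$ such that $\operatorname{div}\tilde z_i=\operatorname{div} z_i$ for $i=1,2$. The monotonicity of the $p$-Laplacian applied to the approximating equations, combined with the convergences granted by Proposition \ref{prop:MP_Dens}, yields exactly as in the proof of Theorem \ref{th:MP_Nondecr} the inequality
\[
-\into T_k(u_1-u_2)^+\bigl(\operatorname{div} z_1-\operatorname{div} z_2\bigr)\geq 0
\qquad\text{for every } k>0.
\]

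Next, since each $u_i$ is almost $1$-harmonic, $\operatorname{div} z_i\in L^N(\Omega)$, and together with $l(x,u_i)\in L^1(\Omega)$ this gives $f_i:=-\operatorname{div} z_i+l(x,u_i)\in L^1(\Omega)$. The distributional inequality \eqref{eq:Proof_MP_6}, being an inequality between two locally integrable functions in $\mathcal D'(\Omega)$, therefore holds pointwise a.e.\ in $\Omega$, i.e.\ $f_1\leq f_2$ a.e. Testing this pointwise inequality against the non-negative bounded function $T_k(u_1-u_2)^+$ (which is admissible since all products are integrable) would give
\[
\into \bigl(\operatorname{div} z_1-\operatorname{div} z_2\bigr) T_k(u_1-u_2)^+ + \into \bigl(l(x,u_2)-l(x,u_1)\bigr) T_k(u_1-u_2)^+ \geq 0.
\]
The first display shows that the divergence contribution above is $\leq 0$, so one is left with
\[
\into \bigl(l(x,u_2)-l(x,u_1)\bigr) T_k(u_1-u_2)^+ \geq 0 \qquad\text{for every } k>0.
\]

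To conclude, I would observe that on $\{u_1>u_2\}$ the strict monotonicity of $l(x,\cdot)$ forces $l(x,u_2)-l(x,u_1)<0$ and $T_k(u_1-u_2)^+>0$, while outside this set the latter vanishes. Thus the integrand is non-positive a.e.\ and strictly negative on $\{u_1>u_2\}$, so the last integral can be non-negative only if $|\{u_1>u_2\}|=0$, which gives $u_1\leq u_2$ a.e.\ in $\Omega$. The main adaptation with respect to Theorem \ref{th:MP_Nondecr} is precisely this final comparison: the strict sign of $l(x,u_2)-l(x,u_1)$ on $\{u_1>u_2\}$, guaranteed by the strict monotonicity of $l$, plays exactly the role that the hypothesis $f_1<f_2$ on $\{u_1>u_2\}$ played in the previous proof. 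I do not expect any genuinely new technical obstacle beyond verifying that the distributional inequality can be legitimately tested against $T_k(u_1-u_2)^+$, a step made routine by the $L^1$ integrability of both sides.
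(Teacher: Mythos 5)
Your proposal is correct and follows essentially the same route as the paper: Proposition \ref{prop:MP_Dens} gives the inequality $-\int_\Omega T_k(u_1-u_2)^+(\operatorname{div}\tilde z_1-\operatorname{div}\tilde z_2)\geq 0$ exactly as in Theorem \ref{th:MP_Nondecr}, then testing \eqref{eq:Proof_MP_6} with $T_k(u_1-u_2)^+$ and using the strict monotonicity of $l(x,\cdot)$ forces $|\{u_1>u_2\}|=0$. Your observation that both sides of \eqref{eq:Proof_MP_6} lie in $L^1(\Omega)$, so the inequality holds pointwise a.e.\ and may be multiplied by the bounded non-negative function $T_k(u_1-u_2)^+$, is simply an explicit justification of the testing step the paper performs directly.
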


\begin{proof}
Arguing as in the proof of Theorem~\ref{th:MP_Nondecr}, we can deduce that
\begin{equation}
\label{eq:Proof_MP_7}
    -\int_\Omega T_k(u_1-u_2)^+ (\operatorname{div}\tilde z_1-\operatorname{div}\tilde z_2) \geq 0,
\end{equation}
where $\tilde z_1$ and $\tilde z_2$ are two vector fields associated to $u_1$ and $u_2$ respectively. Recall that $\operatorname{div}\tilde z_1 = \operatorname{div} z_1$ and $\operatorname{div}\tilde z_2=\operatorname{div} z_2$.

Taking $T_k(u_1-u_2)^+$ for some $k>0$ fixed as test function in~\eqref{eq:Proof_MP_6}, we obtain
\[
-\int_\Omega T_k(u_1-u_2)^+ (\operatorname{div} \tilde z_1-\operatorname{div} \tilde z_2) + \int_\Omega (l(x,u_1)-l(x,u_2)) T_k(u_1-u_2)^+ \leq 0.
\]
Using~\eqref{eq:Proof_MP_7} and that $l(x,s)$ is increasing, we deduce that
\begin{equation*}
\int_\Omega (l(x,u_1)-l(x,u_2)) T_k(u_1-u_2)^+ = \int_{\{u_1>u_2\}} (l(x,u_1)-l(x,u_2)) T_k(u_1-u_2)^+ = 0.
\end{equation*}
Since $l(x,s)$ is increasing in $s\in\R$, the last equality implies that the set $\{u_1>u_2\}$ has measure zero. We conclude that $u_1\leq u_2$ $\mathrm{a.e.}$ in $\Omega$.
\end{proof}

\begin{remark}
Let us stress that, in the previous theorem, the stronger monotonicity assumption on $l(x,s)$ is not only technical. Due to the homogeneity of the operator, in fact, if $l(x,s)$ is only assumed to be non-decreasing with respect to $s$, then  known non-uniqueness outcomes appear and Theorem \ref{th:MP_Incr} can not hold. \triang
\end{remark}

\section{A general existence result via sub-supersolution method}
\label{sec:subsuper}

Let us consider
\begin{equation} \label{eq:PbSS}
	\begin{cases}
		\dis -\Delta_1 u = F(x,u) & \text{in}\;\Omega,\\
		u=0 & \text{on}\;\partial\Omega,
	\end{cases}
\end{equation}
where $F\colon \Omega\times \R\to \mathbb{R}$ is a Carathe\'{o}dory function on which, we underline, no sign condition is imposed. As it is usual in  sub-supersolution existence type results, we assume that there is a Carathe\'{o}dory function $l\colon \Omega\times \R \to \mathbb{R}$ such that
 \begin{equation} \label{eq:hyp_ss_l_incr}
    l(x,s) \text{ is increasing in } s\in \mathbb{R} \text{ and }F(x,s) + l(x,s) \text{ is non-decreasing in } s\in \mathbb{R}.
\end{equation} 
Without loosing generality we can suppose that
\begin{equation}
    \label{eq:hyp_ss_l_inf}
    \lim_{s\to \pm \infty} l(x,s) = \pm \infty \text{ uniformly w.r.t. } x \qquad \text{and} \qquad l(x,s)s \geq 0,\ \forall s\in\R.
\end{equation}
We further assume  that it holds
\begin{equation} \label{eq:hyp_ss_l_reg}
    l(x,s)\in L^N(\Omega) \text{ for all } s\in\R \text{ fixed}.
\end{equation}
We first specify what we mean by a sub- and a supersolution to \eqref{eq:PbSS}.

\begin{defin}  
    A function $u\in BV(\Omega)$ is a \textit{subsolution} (resp. a \textit{supersolution}) to~\eqref{eq:PbSS} if $F(x,u) \in L^N(\Omega)$ and if there exists $z\in \DM(\Omega)$ with $\|z\|_{L^\infty(\Omega)^N}\leq 1$ and $\operatorname{div} z\in L^N(\Omega)$ such that
    \begin{equation}\label{def_distr}
    	-\operatorname{div} z \overset{(\geq)}{\leq}  F(x,u)   \text{ in } \mathcal{D}'(\Omega),
    \end{equation}
     \begin{equation}\label{def_pairing}
 	(z, Du)=|Du| \text{ as measures in } \Omega,
 	\end{equation}   
    \begin{equation}\label{def_boundary}
	|u(x)|+u(x)[z,\nu] (x) =0 \text{ for } \mathcal{H}^{N-1}\text{-a.e. } x \in \partial\Omega.
  	\end{equation}   
     We say that $u$ is a \textit{solution} to \eqref{eq:PbSS} if it is both a sub- and a supersolution to \eqref{eq:PbSS}.
\end{defin}

Now we state and prove the main theorem of this section.

\begin{theorem}
\label{th:SS}
Assume that $F(x,s)$ and $l(x,s)$ verify~\eqref{eq:hyp_ss_l_incr}--\eqref{eq:hyp_ss_l_reg}. Let $w$ be a subsolution to~\eqref{eq:PbSS} and let $v$ be a supersolution to~\eqref{eq:PbSS} such that
\begin{gather}
    \label{eq:hyp_ss_LN}
    l(x,w),\ l(x,v)\in L^N(\Omega).
\end{gather}
If $w\leq v\ \ae$ in $\Omega$, then problem~\eqref{eq:PbSS} has a solution $u\in BV(\Omega)\cap L^\infty(\Omega)$ satisfying that $w\leq u\leq v \ \ae$ in $\Omega$.
\end{theorem}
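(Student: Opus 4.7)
The natural attack is a Sattinger-type iteration, now for the perturbed operator $-\Delta_1+l(x,\cdot)$. Setting $u_0:=w$, I would define $u_{n+1}$ as the unique solution of the Dirichlet problem
\begin{equation*}
    -\Delta_1 u_{n+1} + l(x, u_{n+1}) = F(x, u_n) + l(x, u_n) \quad \text{in } \Omega, \qquad u_{n+1}=0 \text{ on } \partial\Omega.
\end{equation*}
Existence for this perturbed equation at each step is ensured, with \emph{no} smallness condition on the datum, by the assumption $\lim_{s\to\pm\infty}l(x,s)=\pm\infty$ combined with the regularity \eqref{eq:hyp_ss_l_reg} (the AnDaSe-type result highlighted in the Introduction), while uniqueness is a direct consequence of the strict comparison principle, Theorem~\ref{th:MP_Incr}. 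The datum $F(x,u_n)+l(x,u_n)$ will be controlled in $L^N(\Omega)$ through the trapping established below together with hypothesis \eqref{eq:hyp_ss_LN} and the integrability of $F(x,w)$, $F(x,v)$ built into the sub/supersolution definition.

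The next step is to show, by induction, that $w\leq u_n\leq u_{n+1}\leq v$ almost everywhere in $\Omega$. The base case $u_0=w\leq v$ is given. For the inductive step, since $F(x,\cdot)+l(x,\cdot)$ is non-decreasing by \eqref{eq:hyp_ss_l_incr}, the right-hand side of the equation for $u_{n+1}$ is pinched between that for $u_n$ (or the subsolution inequality for $w$, when $n=0$) and $F(x,v)+l(x,v)$, which in turn is dominated by the supersolution inequality for $v$. Two applications of Theorem~\ref{th:MP_Incr}, using crucially that $l(x,\cdot)$ is \emph{strictly} increasing, then deliver both $u_n\leq u_{n+1}$ and $u_{n+1}\leq v$.

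By monotone convergence, $u_n\nearrow u$ pointwise and in $L^1(\Omega)$, with $w\leq u\leq v$. An $L^\infty$ bound on the whole sequence comes from $\lim_{s\to\pm\infty}l(x,s)=\pm\infty$ uniformly in $x$: since $l(x,u_n)$ is sandwiched in $L^N(\Omega)$ between $l(x,w)$ and $l(x,v)$, the $u_n$ cannot escape a common sublevel set. A uniform $BV$-estimate is then obtained in a standard way, testing the equation for $u_n$ against $u_n$ itself via the Green formula \eqref{eq:Green} and invoking the $L^N$-bound on the right-hand side; lower semicontinuity of the $BV$-norm along $L^1(\Omega)$-convergence yields $u\in BV(\Omega)\cap L^\infty(\Omega)$.

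The main obstacle is passing to the limit in the Anzellotti-type triplet \eqref{def_distr}--\eqref{def_boundary}. The vector fields $z_n$ associated with $u_n$ are equibounded in $L^\infty(\Omega)^N$, so along a subsequence $z_n\rightharpoonup z$ weakly-$\ast$ and the distributional equation \eqref{def_distr} passes to the limit without effort. Recovering the pairing identity $(z,Du)=|Du|$ as measures and the boundary condition \eqref{def_boundary}, however, requires playing the Green formula against both $u_n$ and $u$, combining lower semicontinuity of $\int_\Omega \varphi|Du|$ with Lemma~\ref{lem:Composition} for composition with truncations, and exploiting the $L^1$-domination of the product $l(x,u_n)u_n$ by $l(x,v)v+l(x,w)w$ to upgrade weak convergence to the equality of measures. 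This is the technical core of the argument and precisely the point at which the structural differences between the $1$- and the $p$-Laplacian are most keenly felt.
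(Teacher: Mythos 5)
Your skeleton coincides with the paper's: the same iteration $-\Delta_1 u_n + l(x,u_n) = F(x,u_{n-1})+l(x,u_{n-1})$ starting from $u_0=w$ (existence from the absorption result of the appendix, ordering by two applications of Theorem~\ref{th:MP_Incr}), monotone limit, weak-$\ast$ limit of the vector fields, and the same list of tools for recovering \eqref{def_pairing}--\eqref{def_boundary}. However, there is a genuine gap at the $L^\infty$ step. Neither $w$ nor $v$ is assumed bounded (they are only $BV(\Omega)$ sub/supersolutions), so the trapping $w\le u_n\le v$ gives no pointwise bound; and knowing that $l(x,u_n)$ is sandwiched between $l(x,w)$ and $l(x,v)$ only bounds $l(x,u_n)$ in $L^N(\Omega)$, which together with the uniform coercivity \eqref{eq:hyp_ss_l_inf} controls $|\{|u_n|>k\}|$ uniformly but does not prevent $u_n$ from being unbounded on small sets. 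The paper devotes Step 3 of its proof precisely to this point: one tests \eqref{eq:Proof_SS_1} with $G_k(u_n)$, uses Lemma~\ref{lem:Composition} and the boundary identity to produce the full $BV$-norm of $G_k(u_n)$ on the left, splits the majorant $\tilde F:=\max\{|F(x,w)+l(x,w)|,|F(x,v)+l(x,v)|\}$ over $\{\tilde F\le h\}$ and $\{\tilde F>h\}$, absorbs the first piece by the coercivity of $l$ (choosing $k=k(h)$ with $\inf_{|s|\ge k}|l(x,s)|\ge h$) and the second by Sobolev's inequality after fixing $h$ so large that $\mathcal{S}_1\|\tilde F\chi_{\{\tilde F>h\}}\|_{L^N(\Omega)}<1$; this forces $G_k(u_n)\equiv 0$, i.e.\ $\|u_n\|_{L^\infty(\Omega)}\le k$ uniformly in $n$. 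This quantitative absorption is exactly how the absence of any smallness assumption on the data is circumvented, and it cannot be replaced by the sandwich remark you give.

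A secondary consequence: your $BV$ estimate ``by invoking the $L^N$-bound on the right-hand side'' does not close by itself either, since the resulting inequality $\|u_n\|_{BV(\Omega)}\le \mathcal{S}_1\|\tilde F\|_{L^N(\Omega)}\|u_n\|_{BV(\Omega)}$ can only be absorbed when $\mathcal{S}_1\|\tilde F\|_{L^N(\Omega)}<1$, which is not assumed; one needs either the uniform $L^\infty$ bound just described (as in the paper, where $\int_\Omega\tilde F|u_n|\le C\int_\Omega\tilde F$) or a repetition of the same splitting/coercivity trick. The remaining limit steps you outline (distributional equation by weak-$\ast$ convergence, pairing via testing with $u_n\varphi$ and lower semicontinuity of $u\mapsto\int_\Omega\varphi|Du|$, boundary condition via testing with $u_n$ and the Green formula) are the ones the paper carries out, so once the $L^\infty$ step is repaired the argument goes through.
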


\begin{remark}
	  Let us stress that assumption \eqref{eq:hyp_ss_l_incr} can be weakened by requiring that the monotonicity holds only for $s \in [w(x), v(x)]$ for almost every $x\in \Omega$. \triang
\end{remark}

\begin{remark}
As we will see, we actually show the existence of a minimal solution $u_1\in BV(\Omega)\cap L^\infty(\Omega)$ to~\eqref{eq:PbSS}, in the sense that if $u$ is a solution to~\eqref{eq:PbSS} such that $w\leq u\leq v$, then $u_1\leq u$. Indeed one can make the same proof using $u$ instead of $v$ as a supersolution to~\eqref{eq:PbSS}. 

On the other hand, if one starts the iteration procedure from above, one can also show the existence of a maximal solution $u_2\in BV(\Omega)\cap L^\infty(\Omega)$ to~\eqref{eq:PbSS}. Therefore, every solution $u$ to~\eqref{eq:PbSS} such that $w\leq u\leq v$ verifies $u_1\leq u\leq u_2$ and, as a consequence, belongs to $L^\infty(\Omega)$. \triang
\end{remark}

\begin{remark}\label{rembv}
    Assumption~\eqref{eq:hyp_ss_l_incr} is equivalent to asking that, for $\ae$ $x\in\Omega$ fixed, $F(x,s)\in BV_{\rm loc}(\re)$. Indeed, observe that, if~\eqref{eq:hyp_ss_l_incr} is verified, then $F(x,s)$ can be written as the difference of two non-decreasing functions, $F(x,s)+l(x,s)$ and $l(x,s)$; then the Jordan decomposition Theorem implies that $F(x,s)\in BV_{\rm loc}(\re)$ for $\ae$ $x\in\Omega$. Conversely, if $F(x,s)\in BV_{\rm loc}(\re)$ for $\ae$ $x\in\Omega$, again the Jordan decomposition Theorem states that $F(x,s)$ can be written as the difference of two non-decreasing functions $m(x,s)$ and $l(x,s)$, namely $F(x,s) = m(x,s) - l(x,s)$. Therefore, $F(x,s) + l(x,s)$ is non-decreasing. \triang
\end{remark}

\begin{proof}[Proof of Theorem \ref{th:SS}]
\mbox{}
The proof is carried on by subsequent steps.

\smallskip

\textbf{Step 1.} Construction of a non-decreasing sequence $u_n$ such that $w\leq u_n\leq v$ a.e. in $\Omega$.

Let $u_0:=w$. We start our iterative scheme by defining $u_1$ as the unique solution to
\begin{equation*}
	\begin{cases}
		\dis -\Delta_1 u_1 + l(x,u_1) = F(x,u_0) + l(x,u_0) & \text{in}\;\Omega,\\
		u_1=0 & \text{on}\;\partial\Omega.
	\end{cases}
\end{equation*} 
The existence of $u_1\in BV(\Omega)\cap L^\infty(\Omega)$ is granted by Theorem~\ref{th:Abs} as $F(x,u_0) + l(x,u_0)\in L^N(\Omega)$ by~\eqref{eq:hyp_ss_LN} and $l(x,s)$ satisfies both~\eqref{eq:hyp_ss_l_inf} and~\eqref{eq:hyp_ss_l_reg}.  Moreover, as $l$ is increasing in $s$ and \eqref{eq:hyp_ss_l_reg} is in force, one gains that $l(x,u_1) \in L^N(\Omega)$.  

Now let us show that $u_0\leq u_1\leq v$ a.e. in $\Omega$. Indeed, as $u_0$ is a subsolution to~\eqref{eq:PbSS}, one has that
\[
-\Delta_1 u_0 + l(x,u_0) \leq -\Delta_1 u_1 + l(x,u_1),
\]
which allows to apply Theorem \ref{th:MP_Incr} to assure that $u_0\leq u_1$ a.e. in $\Omega$. On the other hand, as $u_0=w\leq v$, hypothesis~\eqref{eq:hyp_ss_l_incr} can be applied to obtain $F(x,u_0)+l(x,u_0) \leq F(x,v) + l(x,v)$. Since $v$ is a supersolution to~\eqref{eq:PbSS}, this implies that
\[
-\Delta_1 u_1 + l(x,u_1) \leq -\Delta_1 v + l(x,v), 
\]
and, once more, Theorem~\ref{th:MP_Incr} gives that $u_1\leq v$ a.e. in $\Omega$. A consequence of $u_0\leq u_1\leq v$ and both hypotheses~\eqref{eq:hyp_ss_l_incr} and~\eqref{eq:hyp_ss_LN} is that $F(x,u_1)+l(x,u_1) \in L^N(\Omega)$.

Hence, as applications of Theorem \ref{th:Abs}, once $u_{n-1}$ is defined, one can recursively set $u_n$ as the unique solution to
\begin{equation}\label{eq:Proof_SS_1}
	\begin{cases}
		\dis -\Delta_1 u_n + l(x,u_n) = F(x,u_{n-1}) + l(x,u_{n-1}) & \text{in}\;\Omega,\\
		u_n=0 & \text{on}\;\partial\Omega.
	\end{cases}
\end{equation} 
Following the previous arguments, one can prove by induction that, for every $n\in \N$, function $u_n$ belongs to $BV(\Omega)\cap L^\infty(\Omega)$ and verifies that $w\leq u_{n-1}\leq u_n \leq v$ and that $F(x,u_n)+l(x,u_n) \in L^N(\Omega)$.

Therefore, $u_n$ is a non-decreasing sequence for which we can define $u(x)=\lim_{n\to\infty} u_n(x)$ for almost every $x\in\Omega$. Observe that $w\leq u\leq v$ a.e. in $\Omega$. We will show that $u$ is a solution to~\eqref{eq:PbSS}.

\smallskip

\textbf{Step 2.} Existence of the vector field $z$.

Let $z_n\in \DM(\Omega)$ be the associated vector fields to $u_n$. We recall that $\|z_n\|_{L^\infty(\Omega)^N} \leq 1$ and that $(z_n,Du_n) = |Du_n|$ as measures in $\Omega$. Moreover, the boundary condition is satisfied in this way: $|u_n|+u_n[z_n,\nu] =0$ $\mathcal{H}^{N-1}$-$\ae$ on $\partial\Omega$.

Since $z_n$ is bounded in $L^\infty(\Omega)^N$, then there exists $z\in L^\infty(\Omega)^N$ such that $z_n\rightharpoonup z$ *-weakly in $L^\infty(\Omega)^N$. As the norm is *-weakly lower semicontinuous, then we deduce $\|z\|_{L^\infty(\Omega)^N}\leq 1$.

\smallskip

\textbf{Step 3.} $u_n$ is bounded in $L^\infty(\Omega)$ with respect to $n$.

Firstly let us define
\begin{equation*}
    \tilde{F}(x) := \max\{|F(x,w)+l(x,w)|, |F(x,v)+ l(x,v)|\} \in L^N(\Omega).
\end{equation*} 
Due to hypothesis~\eqref{eq:hyp_ss_l_incr} and since $w\leq u_n\leq v$, we know that
\begin{equation}
    \label{eq:Proof_SS_2}
    |F(x,u_n)+l(x,u_n)| \leq \tilde F(x),\ \forall n\in\N.
\end{equation}
Then, taking $G_k(u_n)\in BV(\Omega)\cap L^\infty(\Omega)$ $(k>0)$  as test function in~\eqref{eq:Proof_SS_1}, using~\eqref{eq:Green} and \eqref{eq:Proof_SS_2}, one gets
\begin{equation}
	\begin{aligned}
    \label{eq:Proof_SS_3}
    &\int_\Omega (z_n,D G_k(u_n)) - \int_{\partial\Omega} G_k(u_n)[z_n,\nu] \ \dH + \int_\Omega l(x,u_n) G_k(u_n) 
    \\
    &= \int_\Omega [F(x,u_{n-1}) + l(x,u_{n-1})] G_k(u_n) 
    \leq \int_\Omega \tilde F(x) |G_k(u_n)|.
    \end{aligned}
\end{equation}
Now observe that $u_n[z_n,\nu] =-|u_n|$ $\mathcal{H}^{N-1}$-a.e. on $\partial\Omega$ implies that $G_k(u_n)[z_n,\nu] =-|G_k(u_n)|$ $\mathcal{H}^{N-1}$-a.e. on $\partial\Omega$ since $G_k(s)s\geq 0$ for all $s\in\R$. Moreover, as $G_k$ is a Lipschitz function, one has that $(z_n, DG_k(u_n)) = |DG_k(u_n)|$ as measures in $\Omega$ by Lemma~\ref{lem:Composition}. Then, from~\eqref{eq:Proof_SS_3}, one deduces
\begin{equation}
    \label{eq:Proof_SS_4}
    \int_\Omega |DG_k(u_n)| + \int_{\partial\Omega} |G_k(u_n)| \ \dH + \int_\Omega l(x,u_n) G_k(u_n) \leq \int_\Omega \tilde F(x) |G_k(u_n)|.
\end{equation}

Let $h>0$ be a number that will be fixed later. Due to hypothesis~\eqref{eq:hyp_ss_l_inf}, there exists some $k>0$ depending on $h$ such that
\begin{equation*}
    \inf_{|s|\in [k,\infty)} |l(x,s)| \geq h.
\end{equation*}
Then, for the third integral of~\eqref{eq:Proof_SS_4} one has
\begin{equation}
    \label{eq:Proof_SS_5}
    h \int_\Omega |G_k(u_n)| \leq \inf_{|s|\in [k,\infty)} |l(x,s)| \int_\Omega |G_k(u_n)| \leq \int_\Omega l(x,u_n) G_k(u_n).
\end{equation}

For the right-hand of~\eqref{eq:Proof_SS_4}, using H\"{o}lder's and Sobolev's inequalities we have
\begin{equation}
    \label{eq:Proof_SS_6}
    \begin{split}
    \int_\Omega \tilde F(x) |G_k(u_n)| &\leq \int_{\{\tilde F\leq h\}} \tilde F |G_k(u_n)| + \int_{\{\tilde F>h\}} \tilde F |G_k(u_n)| \\
    &\leq h \int_\Omega |G_k(u_n)| + \left\|\tilde F \chi_{\{\tilde F>h\}} \right\|_{L^N(\Omega)} \|G_k(u_n)\|_{L^{\frac{N}{N-1}}(\Omega)} \\
    &\leq h \int_\Omega |G_k(u_n)| + \left\|\tilde F \chi_{\{\tilde F>h\}} \right\|_{L^N(\Omega)} \mathcal{S}_1 \left( \int_\Omega |D G_k(u_n)| + \int_{\partial\Omega} |G_k(u_n)| \ \dH \right).
    \end{split}
\end{equation}

Joining~\eqref{eq:Proof_SS_4},~\eqref{eq:Proof_SS_5} and~\eqref{eq:Proof_SS_6} we obtain
\begin{equation}
    \label{eq:Proof_SS_7}
    \left( 1- \left\|\tilde F \chi_{\{\tilde F>h\}} \right\|_{L^N(\Omega)} \mathcal{S}_1 \right) \left( \int_\Omega |DG_k(u_n)| + \int_{\partial\Omega} |G_k(u_n)| \ \dH \right) \leq 0.
\end{equation}

Now, we fix $h$ large enough in order to have
\begin{equation*}
    \left\|\tilde F \chi_{\{\tilde F>h\}} \right\|_{L^N(\Omega)} \mathcal{S}_1 < 1.
\end{equation*}

This allows to deduce from~\eqref{eq:Proof_SS_7} that $\|G_k(u_n)\|_{BV(\Omega)} = 0$ and thus $G_k(u_n)=0$ $\ae$ in $\Omega$. Therefore, we conclude that $\|u_n\|_{L^\infty(\Omega)}\leq k$ for every $n\in\N$. This also shows that $u\in L^\infty(\Omega)$ as $u_n\to u$ almost everywhere in $\Omega$.

\smallskip

\textbf{Step 4.} $u_n$ is bounded in $BV(\Omega)$ with respect to $n$.

Taking $u_n\in BV(\Omega)\cap L^\infty(\Omega)$ as test function in~\eqref{eq:Proof_SS_1}, using~\eqref{eq:Green} and recalling~\eqref{eq:Proof_SS_2}, we get
\begin{equation*}
    \int_\Omega (z_n,Du_n) - \int_{\partial\Omega} u_n[z,\nu] \ \dH + \int_\Omega l(x,u_n) u_n = \int_\Omega [F(x,u_{n-1}) + l(x,u_{n-1})] u_n \leq \int_\Omega \tilde F(x) |u_n|.
\end{equation*}
As $(z_n, Du_n) = |Du_n|$ as measures in $\Omega$ and $u_n[z_n,\nu] = -|u_n|$  $\mathcal{H}^{N-1}$-a.e. on $\partial\Omega$, from the previous one gets
\begin{equation}    \label{eq:Proof_SS_8bis}
	\int_\Omega |Du_n| + \int_{\partial\Omega} |u_n|\ \dH \leq \int_\Omega \tilde F(x) |u_n| \leq  C\int_\Omega \tilde F(x),
\end{equation}
where we have also used \eqref{eq:hyp_ss_l_inf} and the fact that $u_n$ is bounded in $L^\infty(\Omega)$ with respect to $n$. 
Since $\tilde F\in L^N(\Omega)$, from \eqref{eq:Proof_SS_8bis} we conclude that $u_n$ is bounded in $BV(\Omega)$ with respect to $n$. Therefore standard arguments show that $u\in BV(\Omega)\cap L^\infty(\Omega)$ and also that $u_n\to u$ in $L^q(\Omega)$ for every $q<\infty$.

\smallskip

\textbf{Step 5.} Distributional formulation \eqref{def_distr}.

We take $\varphi\in C_c^1(\Omega)$ as test function in~\eqref{eq:Proof_SS_1} obtaining, after using~\eqref{eq:Green}, that
\begin{equation} \label{eq:Proof_SS_8tris}
    \int_\Omega z_n \cdot \nabla \varphi + \int_\Omega l(x,u_n) \varphi = \int_\Omega \left(F(x,u_{n-1}) + l(x,u_{n-1})\right) \varphi,
\end{equation}
and we aim to take $n\to\infty$ into the previous.
The first term of \eqref{eq:Proof_SS_8tris} simply passes to the limit as $z_n\to z$ *-weakly in $L^\infty(\Omega)^N$. For the second and the third term one can apply the Lebesgue Theorem thanks to \eqref{eq:hyp_ss_LN} and \eqref{eq:Proof_SS_2} since $w\le u_n\le v$ in $\Omega$. Therefore, we have shown
\begin{equation*}
    \int_\Omega z \cdot \nabla \varphi = \int_\Omega F(x,u) \varphi,\ \forall \varphi\in C_c^1(\Omega),
\end{equation*}
i.e.,
\begin{equation} \label{eq:Proof_SS_11}
    -\operatorname{div} z = F(x,u) \text{ in } \mathcal{D}'(\Omega).
\end{equation}

\smallskip

\textbf{Step 6.} Role of $z$ given by \eqref{def_pairing}.

Here we take $u_n\varphi\in BV(\Omega)\cap L^\infty(\Omega)$ with $0\leq \varphi\in C_c^1(\Omega)$ as test function in~\eqref{eq:Proof_SS_1}. Using~\eqref{eq:Green} and that $(z_n,Du_n) = |Du_n|$ as measures in $\Omega$, we get
\begin{equation*}
    \int_\Omega |Du_n| \varphi + \int_\Omega z_n \cdot \nabla \varphi \ u_n + \int_\Omega l(x,u_n) u_n\varphi = \int_\Omega (F(x,u_{n-1}) + l(x,u_{n-1})) u_n\varphi,
\end{equation*}
 and we aim to take $n\to\infty$ into the previous. On the first integral we use the lower semicontinuity. On the second one we use that $z_n\to z$ *-weakly in $L^\infty(\Omega)^N$ and that $u_n\to u$ in $L^q(\Omega)$ for any $q<\infty$. Finally, for the remaining terms we can apply once more the Lebesgue Theorem as for the conclusion of Step $5$. Thus, we have proved that
\begin{equation}
    \label{eq:Proof_SS_9}
    \int_\Omega |Du| \varphi \leq \int_\Omega F(x,u) u\varphi - \int_\Omega uz \cdot \nabla \varphi.
\end{equation}
Now observe that, taking $u\varphi$ as test function in~\eqref{eq:Proof_SS_11}, one obtains that
\begin{equation}
    \label{eq:Proof_SS_12}
    -\into u\varphi \operatorname{div} z = \into F(x,u)u \varphi.
\end{equation}
Then, from~\eqref{eq:Proof_SS_9},~\eqref{eq:Proof_SS_12} and~\eqref{eq:Pairing}, we get
\[
\int_\Omega |Du| \varphi \leq \int_\Omega (z,Du)\varphi,\ \forall \varphi\in C_c^1(\Omega) \text{ with }\varphi\ge 0.
\]
As the reverse inequality is immediate since $\|z\|_{L^\infty(\Omega)^N}\leq 1$, we conclude that $(z,Du) = |Du|$ as measures in $\Omega$.

\smallskip

\textbf{Step 7.} Boundary condition \eqref{def_boundary}.

As in Step 4 we take $u_n \in BV(\Omega)\cap L^\infty(\Omega)$ as test function in~\eqref{eq:Proof_SS_1}. Using~\eqref{eq:Green} we obtain
\begin{equation*}
    \int_\Omega (z_n, Du_n) - \int_{\partial\Omega} u_n[z_n,\nu]\ \dH + \int_\Omega l(x,u_n) u_n = \int_\Omega (F(x,u_{n-1}) + l(x,u_{n-1})) u_n.
\end{equation*}

Using that $(z_n, Du_n) = |Du_n|$ as measures in $\Omega$ and that $u_n[z_n,\nu] = -|u_n|$  $\mathcal{H}^{N-1}$-a.e. on $\partial\Omega$, the previous expression takes to
\begin{equation}
    \label{eq:Proof_SS_10}
    \int_\Omega |Du_n| + \int_{\partial\Omega} |u_n|\ \dH + \int_\Omega l(x,u_n) u_n = \int_\Omega (F(x,u_{n-1}) + l(x,u_{n-1})) u_n.
\end{equation}

Next, we want to pass to the limit into \eqref{eq:Proof_SS_10}. For the first two terms on the left-hand of \eqref{eq:Proof_SS_10}, we can use the lower semicontinuity of the norm. In the remaining terms, we can easily apply the Lebesgue Theorem. Hence, we have shown that 
\begin{equation*}
    \int_\Omega |Du| + \int_{\partial\Omega} |u| \ \dH \leq \int_\Omega F(x,u)u.
\end{equation*}
Then it follows from taking $u$ as test function in \eqref{eq:Proof_SS_11} and from \eqref{eq:Green} that it holds
\begin{equation*}
    \int_\Omega |Du| + \int_{\partial\Omega} |u|\ \dH \leq \int_\Omega (z,Du) - \int_{\partial\Omega} u[z,\nu]\ \dH.
\end{equation*}
Noting that $(z,Du) = |Du|$ as measures in $\Omega$, the previous inequality takes to
\begin{equation*}
    \int_{\partial\Omega} \big(|u|+ u[z,\nu] \big) \ \dH \leq 0.
\end{equation*}

As $\|[z,\nu]\|_{L^\infty(\partial\Omega)} \leq \|z\|_{L^\infty(\Omega)^N} \leq 1$, we deduce that $|u|+u[z,\nu] (x) =0$ $\mathcal{H}^{N-1}$-a.e. on $\partial\Omega$, namely that \eqref{def_pairing} holds. This concludes the proof of Theorem \ref{th:SS}.    
\end{proof}

\section{An application to a concave-convex problem}
\label{sec:existence}
Let us  consider  the following Dirichlet problem 
\begin{equation}
	\label{eq:PbMain}
	\begin{cases}
		\dis -\Delta_1 u = \lambda h(u)f(x) + g(x,u) & \text{in}\;\Omega,\\
		u=0 & \text{on}\;\partial\Omega,
	\end{cases}
\end{equation}
where, $\lambda\geq 0$ is a parameter and $f\in L^{N,\infty}(\Omega)$ is positive. Furthermore, $h\colon [0,\infty)\mapsto [0,\infty]$ is assumed to be a continuous {finite outside the origin and possibly singular at zero (i.e.  $h(0)=\infty$)}, while $g\colon \Omega \times [0,\infty) \mapsto [0,\infty)$ is just a Carath\'eodory function with no imposed growth at infinity.
\medskip

Under suitable conditions, we show both existence for $\lambda$ small and non-existence for $\lambda$ large enough; in particular it is worth mention that, as we aim to be as general as possible,   these  two results do not share common  hypotheses (although they are extremely related) and they can be viewed as independent results. Nevertheless, we stress that both are valid for the model problem defined by $h(s)=s^{-\gamma}$ with $\gamma>0$ and $g(x,s) = s^p$ where $p>0$. In fact, in any case we show that the growth of $g(x,s)$ in $s$ at infinity does not play any role, so that more generally one can consider functions like $g(x,s) = \kappa(s)$ where $\kappa \colon [0, \infty) \to [0,\infty)$ is a continuous function satisfying $\kappa(0)=0$ and $\lim_{s\to\infty} \kappa(s) = \infty$. 

\medskip

First, let us specify what we mean by a solution to \eqref{eq:PbMain}.

\begin{defin}
	A  non-negative function $u\in BV(\Omega)\cap L^\infty(\Omega)$ is a solution to problem \eqref{eq:PbMain} if $h(u)f,  \,g(x,u) \in L^1_{\rm loc}(\Omega)$ and if there exists $z\in \mathcal{D}\mathcal{M}^\infty(\Omega)$ with $\|z\|_{L^\infty(\Omega)^N}\le 1$ such that
	\begin{gather}
		\label{def:distrp=1}
		-\operatorname{div}z = \lambda h(u)f(x) + g(x,u) \quad \text{ in } \mathcal{D}'(\Omega), \\
		\label{def:zp=1} (z,Du)=|Du| \quad \text{ as measures in } \Omega,\\
		\label{def:bordo}
        u(x)+u(x)[z,\nu](x) = 0
		\text{ for  $\mathcal{H}^{N-1}$-a.e. } x \in \partial\Omega.
	\end{gather}
\end{defin}

\begin{remark}
Note that we are asking the solution to globally belong to $BV(\Omega)$ even though~\eqref{eq:PbMain} may have a strong singular term (for instance, when $h(s)=s^{-\gamma}$ with $\gamma >1$). When it comes to strongly singular problems driven by the $p$-Laplacian operator, only solutions with local finite energy are guaranteed (see~\cite{BO}). In fact, in some cases these solutions have unbounded global energy even for smooth regular data $f$ (see~\cite{LM, OP2}). However, for the 1-Laplacian this is not the case as one can always find finite energy solutions independently of the value of $\gamma$ (see~\cite{MAOP} for further insights).
\triang
\end{remark}

We start by dealing with the existence theorem and stating its hypotheses. On $h$ we suppose that 
\begin{equation}\label{eq:hyp_h}
	\displaystyle \exists\;\gamma> 0\;\ \text{such that}\;\  h(s)\le \frac{1}{s^\gamma} \ \text{for all } s> 0 \text{ and }  h(0)=\infty,
\end{equation}
while we assume the existence of a continuous function $\kappa\colon [0,\infty)\mapsto [0,\infty)$ such that
\begin{equation}\label{eq:hyp_g} 
    g(x,s)\leq \kappa(s), \text{ for } \mathrm{a.e.}\ x\in\Omega,\ \forall s\geq 0 \ \text{ and } \ \kappa(0)=0.
\end{equation}
Without any restriction, $\kappa(s)$ can be assumed increasing and unbounded at infinity. 
\medskip

Our strategy for showing existence of solution is to approach~\eqref{eq:PbMain} directly with 1-Laplace problems by desingularizing $h(s)$ and truncating $f$ as 
\begin{equation}	\label{eq:hyp_l_incr0}
	h_\varepsilon(s) = h(s+\varepsilon) \quad \text{ and } \quad f_\varepsilon = T_{1/\varepsilon} (f) \ \forall \varepsilon>0.	
\end{equation}
The existence of solution to the approximated problems shall follow by Theorem~\ref{th:SS}, so that some monotonicity properties on the lower order term are needed. To this aim, we suppose that for every $\varepsilon>0$ and every $\lambda>0$ there exists an increasing Carathe\'{o}dory function $l_{\varepsilon,\lambda}\colon \Omega\times [0,\infty)\to \mathbb{R}$ such that 
\begin{equation}
	\label{eq:hyp_l_incr}
	\lambda h_\varepsilon(s)f_\varepsilon(x) + g(x,s) + l_{\varepsilon,\lambda}(x,s) \text{ is non-decreasing in } s\geq 0
\end{equation}
and
\begin{equation}
	\label{eq:hyp_l_LN}
	l_{\varepsilon,\lambda}(x,s)\in L^N(\Omega) \text{ for all } s>0 \text{ fixed}.
\end{equation}

Furthermore, it is not restrictive to assume
\begin{equation}
	\label{eq:hyp_l_inf}
	\lim_{s\to \infty} l_{\varepsilon,\lambda}(x,s) = \infty \text{ uniformly in } x \quad \text{and} \quad l_{\varepsilon,\lambda}(x,s) \geq 0,\ \forall s\geq 0.
\end{equation}

The existence of solutions to \eqref{eq:PbMain} can be stated as follows.
\begin{theorem} \label{th:CC_exist}
	Assume that $h(s)$ satisfies~\eqref{eq:hyp_h}, that $g(x,s)$ verifies~\eqref{eq:hyp_g} and that~\eqref{eq:hyp_l_incr}--\eqref{eq:hyp_l_inf} hold. Let $0<f\in L^{N,\infty}(\Omega)$. Then there exists $\overline{\lambda}>0$ such that for every $\lambda\leq\overline{\lambda}$ problem~\eqref{eq:PbMain} has a solution $u\in BV(\Omega)\cap L^\infty(\Omega)$.
\end{theorem}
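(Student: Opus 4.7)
The plan is to approximate~\eqref{eq:PbMain} by regularized problems with bounded singular nonlinearity and truncated datum, apply Theorem~\ref{th:SS} at each regularization level, derive uniform-in-$\varepsilon$ estimates exploiting the smallness of $\lambda$, and pass to the limit as $\varepsilon\to 0^+$. For $\varepsilon>0$ I consider
\begin{equation*}
    \begin{cases}
	    \dis -\Delta_1 u = \lambda h_\varepsilon(u) f_\varepsilon(x) + g(x,u) & \text{in}\;\Omega,\\
	    u=0 & \text{on}\;\partial\Omega,
    \end{cases}
\end{equation*}
with $h_\varepsilon,\,f_\varepsilon$ as in~\eqref{eq:hyp_l_incr0}; since $h_\varepsilon\le\varepsilon^{-\gamma}$ is bounded and $f_\varepsilon\in L^\infty(\Omega)$, hypotheses~\eqref{eq:hyp_l_incr}--\eqref{eq:hyp_l_inf} on $l_{\varepsilon,\lambda}$ place this regularized problem precisely in the framework of Theorem~\ref{th:SS}.

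The natural subsolution is $w\equiv 0$ (with $z_w\equiv 0$), since the right-hand side is non-negative. For the supersolution, fix $M>0$ to be chosen later and let $\overline v_\varepsilon\in BV(\Omega)\cap L^\infty(\Omega)$ be the unique solution (provided by Theorem~\ref{th:Abs}) of
\begin{equation*}
    \begin{cases}
    -\Delta_1 \overline v + l_{\varepsilon,\lambda}(x,\overline v) = \lambda\varepsilon^{-\gamma}f_\varepsilon(x) + \kappa(M) + l_{\varepsilon,\lambda}(x,M) & \text{in}\;\Omega,\\
    \overline v=0 & \text{on}\;\partial\Omega.
    \end{cases}
\end{equation*}
Testing this equation with $(\overline v_\varepsilon - M)^+$, using the strict monotonicity of $l_{\varepsilon,\lambda}$ together with the Sobolev--Lorentz embedding $BV(\Omega)\hookrightarrow L^{\frac{N}{N-1},1}(\Omega)$, yields $\overline v_\varepsilon\le M$ a.e.\ in $\Omega$ (for a suitable, possibly $\varepsilon$-dependent, choice of $M$). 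From the pointwise bounds $h_\varepsilon(\overline v_\varepsilon)\le\varepsilon^{-\gamma}$, $g(x,\overline v_\varepsilon)\le\kappa(\overline v_\varepsilon)\le\kappa(M)$ and $l_{\varepsilon,\lambda}(x,\overline v_\varepsilon)\le l_{\varepsilon,\lambda}(x,M)$, a direct computation shows that $\overline v_\varepsilon$ satisfies~\eqref{def_distr}--\eqref{def_boundary} and is thus a supersolution. Theorem~\ref{th:SS} then furnishes $u_\varepsilon\in BV(\Omega)\cap L^\infty(\Omega)$ solving the regularized problem with $0\le u_\varepsilon\le\overline v_\varepsilon\le M$.

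The crucial uniform-in-$\varepsilon$ $L^\infty$ bound is then obtained a posteriori by testing the equation for $u_\varepsilon$ with $G_k(u_\varepsilon)$: on $\{u_\varepsilon>k\}$ I exploit the sharper inequality $h_\varepsilon(u_\varepsilon)\le(u_\varepsilon+\varepsilon)^{-\gamma}\le k^{-\gamma}$ together with $g(x,u_\varepsilon)\le\kappa(u_\varepsilon)$ and the Sobolev--Lorentz inequality. Fixing $M^*$ so that $\tilde{\mathcal{S}}_1\kappa(M^*)|\Omega|^{1/N}<\tfrac{1}{2}$ (possible since $\kappa(0)=0$) and $\overline\lambda$ small enough so that $\tilde{\mathcal{S}}_1 \overline\lambda (M^*)^{-\gamma}\|f\|_{L^{N,\infty}}<\tfrac{1}{2}$, a bootstrap argument with $k=M^*$ yields $\|u_\varepsilon\|_{L^\infty}\le M^*$ uniformly in $\varepsilon$. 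A further test with $u_\varepsilon$ provides a uniform $BV$-bound. Standard BV compactness then delivers $u_\varepsilon\to u$ in $L^1(\Omega)$ and a.e., while $z_\varepsilon\rightharpoonup z$ $\ast$-weakly in $L^\infty(\Omega)^N$. The strict positivity $u>0$ a.e.\ in $\Omega$, needed to make sense of $h(u)$ in the limit, follows from a regularizing effect of the singular term, as in~\cite{DGOP,MAOP}. Finally, the distributional equation~\eqref{def:distrp=1} and the compatibility conditions~\eqref{def:zp=1}--\eqref{def:bordo} are recovered in the limit by lower semicontinuity and the Green--Gauss formula, closely following Steps 5--7 of the proof of Theorem~\ref{th:SS}.

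The hardest part will be the joint control of the singular contribution $\lambda h_\varepsilon(u_\varepsilon)f_\varepsilon$, which can blow up where $u_\varepsilon$ approaches zero, and of the superlinear term $g(x,u_\varepsilon)$ of unconstrained growth at infinity; the smallness of $\lambda$ is precisely what closes the bootstrap $L^\infty$ bound uniformly in $\varepsilon$ and enables the passage to the limit.
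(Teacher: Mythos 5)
There is a genuine gap, and it sits exactly where you flagged ``the hardest part'': the $\varepsilon$-uniform $L^\infty$ control. Your supersolution $\overline v_\varepsilon$ is built from the crude bound $h_\varepsilon\le \varepsilon^{-\gamma}$, so its right-hand side contains $\lambda\varepsilon^{-\gamma}f_\varepsilon$, which blows up as $\varepsilon\to 0^+$; consequently the level $M$ for which $\overline v_\varepsilon\le M$ can hold is forced to depend on $\varepsilon$ and to diverge (and since $\kappa(M)$ and $l_{\varepsilon,\lambda}(x,M)$ appear in the datum defining $\overline v_\varepsilon$, even the choice of $M(\varepsilon)$ is a fixed-point issue you do not resolve). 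Theorem~\ref{th:SS} then only gives $0\le u_\varepsilon\le M(\varepsilon)$. The a posteriori ``bootstrap'' cannot repair this: testing with $G_k(u_\varepsilon)$, $k=M^*$, the singular part is indeed controlled by $\lambda k^{-\gamma}\|f\|_{L^{N,\infty}(\Omega)}\tilde{\mathcal S}_1\|G_k(u_\varepsilon)\|_{BV(\Omega)}$, but on the set $\{u_\varepsilon>M^*\}$ one has $g(x,u_\varepsilon)\le\kappa(u_\varepsilon)$ with $\kappa$ increasing and unbounded, so $\kappa(u_\varepsilon)\ge\kappa(M^*)$ there and can only be bounded by $\kappa(M(\varepsilon))$; your condition $\tilde{\mathcal S}_1\kappa(M^*)|\Omega|^{1/N}<\tfrac12$ would be usable only if you already knew $u_\varepsilon\le M^*$, which is circular, and there is no continuity in $\varepsilon$ or in $k$ to run a genuine continuation argument (the estimate deteriorates, rather than improves, as $k$ grows). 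This is precisely why the paper keeps the singular structure in the supersolution: $v_\varepsilon$ solves \eqref{eq:PbSupersol} with the $\varepsilon$-independent parameter $\Lambda$, Proposition~\ref{prop:Bound} gives $\|v_\varepsilon\|_{L^\infty(\Omega)}\le(\tilde{\mathcal S}_1\|f+1\|_{L^{N,\infty}(\Omega)})^{1/\gamma}\Lambda^{1/\gamma}$ uniformly in $\varepsilon$, and Lemma~\ref{lem:Supersol} fixes $\Lambda$ and $\overline\lambda$ through the positivity near the origin of $s\mapsto s\bigl(1-C_1C^\gamma\kappa(Cs^{1/\gamma})\bigr)$, which is where $\kappa(0)=0$ and the smallness of $\lambda$ are genuinely used; the uniform bound on $u_\varepsilon$ then comes directly from $u_\varepsilon\le v_\varepsilon$, with no bootstrap.

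Two further points. First, your uniform $BV$ estimate ``by testing with $u_\varepsilon$'' fails when $\gamma>1$: the term $\lambda h_\varepsilon(u_\varepsilon)f_\varepsilon u_\varepsilon\le \lambda f_\varepsilon(u_\varepsilon+\varepsilon)^{1-\gamma}$ is not uniformly integrable near $\{u_\varepsilon\approx 0\}$; the paper instead tests with $u_\varepsilon-\|u_\varepsilon\|_{L^\infty(\Omega)}\le 0$ (the idea of \cite{MAOP}), so the sign of the right-hand side eliminates the singular term, and for the same reason the boundary condition in the limit is recovered with the test function $u_\varepsilon^\sigma$, $\sigma=\max\{1,\gamma\}$, not by a verbatim repetition of Step~7 of Theorem~\ref{th:SS}. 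Second, your choice $w\equiv 0$ as subsolution is admissible and would not spoil the positivity of the limit (which the paper derives from Fatou, $h(0)=\infty$ and $f>0$, not from the subsolution), so that simplification is fine; the proof collapses only at the supersolution/uniform-$L^\infty$ step described above.
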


\begin{remark}
As already said in Remark \ref{rembv},  assumption~\eqref{eq:hyp_l_incr} may be formulated in an equivalent way involving only the $BV$ regularity of $h_\varepsilon(s)$ and $g(x,s)$ in the $s$ variable. When we deal with more regular functions, namely when functions $h_\varepsilon(s)$ and $g(x,s)$ are absolutely continuous in $s\in (0,\infty)$ for $\ae$ $x\in \Omega$, one can explicitly define $l_{\varepsilon,\lambda}(x,s)$ as 
\[
l_{\varepsilon,\lambda}(x,s) = \int_0^s -\left(\lambda h'_\varepsilon(t)f_\varepsilon(x) + \frac{\partial g(x,t)}{\partial t}\right)^- \ \mathrm{d}t, \text{ for } \ae\ x\in \Omega \text{ and } s\geq 0,
\]
where $s^{-} = \min\{0,s\}$ denotes the negative part of $s$.
\triang
\end{remark}

\begin{remark}
We assume $h(0)=\infty$ in order to assure the positivity of the solution (see~\cite{DGOP}). When $h(0)<\infty$, the same existence result can be proved in an easier way, but in this case the solutions are not necessarily positive and may even be identically zero.
\triang
\end{remark}

\begin{remark}
This existence result can be extended to the case where $f$ is allowed to be zero in a set of positive measure. Here, the concept of solution has to be changed. The main difference is that the characteristic function $\chi_{\{u>0\}}$ plays a role in the distributional formulation~\eqref{def:distrp=1}. For an in-depth explanation of this phenomenon, we refer the reader to~\cite{DGS}.
\triang
\end{remark}

Before proving Theorem \ref{th:CC_exist} we focus our attention on the non-existence result. Here, the idea is to use a positive eigenfunction associated to the first eigenvalue of the 1-Laplacian as test function in~\eqref{eq:PbMain} to arrive at some contradiction when $\lambda$ is large enough.
\medskip

Concerning $h$, it is assumed to satisfy that
\begin{equation}\label{eq:hyp_h_nonex}
	h(s)>0 \text{ for all } s\ge 0. 
\end{equation}

We also suppose the existence of a ball $B\subset\subset \Omega$ such that
\begin{equation}\label{eq:hyp_g_nonex}
    f(x)\geq c\ \text{for }\mathrm{a.e.}\ x\in B \qquad \text{and} \qquad g(x,s)\geq \kappa(s) \text{ for } \mathrm{a.e.}\ x\in B,\ \forall s\geq 0,
\end{equation}
where $c$ is a positive constant and $\kappa\colon [0,\infty)\to [0,\infty)$ is a function such that $\lim_{s\to\infty} \kappa(s) = \infty$.

\begin{theorem} \label{th:CC_nonexist}
    Assume that both~\eqref{eq:hyp_h_nonex} and~\eqref{eq:hyp_g_nonex} hold. Then, there is some $\tilde\lambda>0$ such that problem~\eqref{eq:PbMain} has no solution for $\lambda\geq \tilde\lambda$.
\end{theorem}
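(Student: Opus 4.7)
The plan is to test \eqref{eq:PbMain} against the characteristic function $\chi_B$ of the ball $B$ provided by \eqref{eq:hyp_g_nonex}. The rationale, recalled in Section \ref{sec:eigenvalue}, is that balls are self-Cheeger sets, so $\chi_B$ is a non-negative first eigenfunction of the $1$-Laplacian on $B$ with $\lambda_1(B) = N/R$, where $R$ is the radius of $B$; equivalently, $\int_{\Omega} |D\chi_B| = (N/R)|B|$. This will yield an a priori upper bound of the form $(N/R)|B|$ on $\int_B (\lambda h(u)f + g(x,u))$, which for $\lambda$ large is incompatible with the strict positivity of $h$ combined with the blow-up of $\kappa$.

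Suppose $u \in BV(\Omega)\cap L^\infty(\Omega)$ is a solution to \eqref{eq:PbMain} with associated vector field $z \in \mathcal{DM}^\infty(\Omega)$. Since $B \subset\subset \Omega$, the trace of $\chi_B$ on $\partial\Omega$ vanishes, and Green's formula \eqref{eq:Green} with test function $\chi_B \in BV(\Omega)\cap L^\infty(\Omega)$ (allowed by compatibility (ii)) reads
\begin{equation*}
\int_\Omega \chi_B^*\, \operatorname{div} z + \int_\Omega (z, D\chi_B) = 0.
\end{equation*}
Since $-\operatorname{div} z = \lambda h(u)f + g(x,u) \in L^1_{\mathrm{loc}}(\Omega)$ is absolutely continuous with respect to Lebesgue measure (hence puts no mass on $\partial B$), and $\|z\|_{L^\infty(\Omega)^N} \leq 1$ gives $|(z, D\chi_B)| \leq |D\chi_B|$ as measures, I would deduce
\begin{equation*}
\int_B \bigl(\lambda h(u) f + g(x, u)\bigr) \leq \int_\Omega |D\chi_B| = \frac{N}{R}|B|.
\end{equation*}
Combining this with \eqref{eq:hyp_g_nonex} ($f \geq c$ and $g(x,u)\geq \kappa(u)$ on $B$) yields
\begin{equation*}
\lambda c \int_B h(u) + \int_B \kappa(u) \leq \frac{N}{R}|B|.
\end{equation*}

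To conclude, I would fix $L > N/R$ and pick $M_0 > 0$ with $\kappa(s) \geq L$ for all $s \geq M_0$ (available since $\kappa(s) \to \infty$); I then set $h_0 := \inf_{[0, M_0]} h$, which is strictly positive both when $h(0) < \infty$ (by continuity and positivity of $h$ on the compact $[0, M_0]$) and when $h(0) = \infty$ (using $h(s) \to \infty$ as $s \to 0^+$ together with continuity on $(0, M_0]$). Splitting $B = B_1 \cup B_2$ with $B_1 = \{u \leq M_0\}$ and $B_2 = \{u > M_0\}$, the previous inequality becomes
\begin{equation*}
\lambda c h_0 |B_1| + L |B_2| \leq \frac{N}{R}|B|,
\end{equation*}
which is impossible as soon as $\lambda \geq \tilde\lambda := L/(c h_0)$, since then both coefficients on the left-hand side strictly exceed $N/R$. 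The most delicate step is the rigorous use of $\chi_B$ as a test function, which requires the Anzellotti-Chen-Frid pairing and Green's formula \eqref{eq:Green} from Section \ref{sec:prel} (notably to handle the jump of $\chi_B$ across $\partial B$); the remainder is a clean exploitation of the self-Cheeger identity $\int_\Omega |D\chi_B| = (N/R)|B|$ together with an elementary partition argument.
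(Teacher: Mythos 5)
Your argument is correct and is essentially the paper's proof in disguise: the eigenfunction $\phi$ used there is a positive constant on $B$ extended by zero, i.e.\ a multiple of $\chi_B$, and the bound $\alpha\lambda_{1,B}|B|$ coincides with your $\tfrac{N}{R}|B| = P(B,\mathbb{R}^N)$, so testing with $\chi_B$ via the Green formula \eqref{eq:Green} (using that $\operatorname{div}z$ is absolutely continuous, so the value of $\chi_B^*$ on $\partial B$ is irrelevant) reproduces the same key estimate without invoking \eqref{eq:PbEigen} explicitly. Your endgame with a strict threshold $L>N/R$ is a slight streamlining of the paper's splitting, as it avoids having to choose $s_0$ so that $|B\cap\{u\le s_0\}|>0$.
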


\begin{remark}
Theorem \ref{th:CC_nonexist} still holds true if we replace hypothesis $\lim_{s\to\infty} \kappa(s) = \infty$ by the less restrictive assumption ${\liminf_{s\to \infty} \kappa(s) > \lambda_{1,B}}$, where $\lambda_{1,B}$ denotes the first eigenvalue of the 1-Laplacian in $B$.  
    \triang
\end{remark}

\subsection{Existence for $\lambda$ small} 
Let us consider the following problems
\begin{equation}
	\label{eq:PbApprox}
	\begin{cases}
		\dis -\Delta_1 u_\varepsilon = \lambda h_{\varepsilon}(u_\varepsilon)f_\varepsilon + g(x,u_\varepsilon) & \text{in}\;\Omega,\\
		u_\varepsilon=0 & \text{on}\;\partial\Omega,
	\end{cases}
\end{equation}
where $h_\varepsilon$ and $f_\varepsilon$ have been defined into \eqref{eq:hyp_l_incr0}. 
\medskip

As we mentioned, the existence of such $u_\varepsilon$ will follow as an application of Theorem~\ref{th:SS}, inspired by the strategy introduced in \cite{B} for the case of the Laplacian. In the sequel, as a subsolution to \eqref{eq:PbApprox}, we consider the non-negative and bounded solution to
\begin{equation}
	\label{eq:PbSubsol}
	\begin{cases}
		\dis -\Delta_1 w_\varepsilon = \lambda h_\varepsilon(w_\varepsilon)f_\varepsilon  & \text{in}\;\Omega,\\
		w_\varepsilon=0 & \text{on}\;\partial\Omega,
	\end{cases}
\end{equation} 
which has been found in \cite[Theorem~3.3]{DGOP}. In particular, let us highlight that no smallness assumption on the datum $f$ is required as $h$ degenerates at infinity. We will denote by $z_{w_\varepsilon}$ the associated vector field to $w_\varepsilon$.

As a supersolution to \eqref{eq:PbApprox} we consider 
\begin{equation}
	\label{eq:PbSupersol}
	\begin{cases}
		\dis -\Delta_1 v_\varepsilon  = \frac{\Lambda (f_\varepsilon + 1)}{(v_\varepsilon+\varepsilon)^\gamma}  & \text{in}\;\Omega,\\
		v_\varepsilon=0 & \text{on}\;\partial\Omega,
	\end{cases}
\end{equation} 
where $\Lambda>0$ is a constant that will be fixed later.
\medskip

Again, the existence of a unique non-negative solution $v_\varepsilon\in BV(\Omega)\cap L^\infty(\Omega)$ for~\eqref{eq:PbSupersol} is due to~\cite[Theorems~3.3 and 3.5]{DGOP}. The vector field associated to $v_\varepsilon$ will be denoted by $z_{v_\varepsilon}$. Next, we give a uniform $L^\infty(\Omega)$-bound for $v_\varepsilon$.

\begin{proposition}
\label{prop:Bound}
    Let $f\in L^{N,\infty}(\Omega)$ be a non-negative function. Then the solution $v_\varepsilon$ of~\eqref{eq:PbSupersol} verifies
    \begin{equation*}
        \|v_\varepsilon\|_{L^\infty(\Omega)} \leq (\tilde{\mathcal{S}}_1 \|f+1\|_{L^{N,\infty}(\Omega)})^\frac{1}{\gamma} \Lambda^\frac{1}{\gamma},\ \forall \varepsilon>0.
    \end{equation*}
\end{proposition}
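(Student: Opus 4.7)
The plan is to obtain the $L^\infty$ bound by a Stampacchia-type truncation argument, testing the equation satisfied by $v_\varepsilon$ against $G_k(v_\varepsilon) \in BV(\Omega) \cap L^\infty(\Omega)$ for $k>0$, and isolating $k$ via the Lorentz-Sobolev embedding $BV(\Omega)\hookrightarrow L^{N/(N-1),1}(\Omega)$ with constant $\tilde{\mathcal{S}}_1$, which is designed precisely to handle data in $L^{N,\infty}(\Omega)$.

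First I would apply the Anzellotti-type Green formula~\eqref{eq:Green} with $u=G_k(v_\varepsilon)$ and $z=z_{v_\varepsilon}$. Since $G_k$ is a non-decreasing Lipschitz function, Lemma~\ref{lem:Composition} together with $(z_{v_\varepsilon},Dv_\varepsilon)=|Dv_\varepsilon|$ yields $(z_{v_\varepsilon},DG_k(v_\varepsilon))=|DG_k(v_\varepsilon)|$ as measures in $\Omega$. On the boundary, the condition $v_\varepsilon(1+[z_{v_\varepsilon},\nu])=0$ combined with $G_k(0)=0$ gives $G_k(v_\varepsilon)(1+[z_{v_\varepsilon},\nu])=0$ $\mathcal{H}^{N-1}$-a.e.\ on $\partial\Omega$, so that $-G_k(v_\varepsilon)[z_{v_\varepsilon},\nu]=|G_k(v_\varepsilon)|$ there. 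The weak formulation therefore reduces to the identity
\[
\int_\Omega |DG_k(v_\varepsilon)| + \int_{\partial\Omega} |G_k(v_\varepsilon)|\, \dH = \int_\Omega \frac{\Lambda(f_\varepsilon+1)}{(v_\varepsilon+\varepsilon)^\gamma}\, G_k(v_\varepsilon).
\]

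The next step is to exploit that $G_k(v_\varepsilon)$ is supported in $\{v_\varepsilon>k\}$, where $(v_\varepsilon+\varepsilon)^\gamma\geq k^\gamma$. Combined with the pointwise bound $f_\varepsilon\leq f$, the right-hand side is controlled by $\tfrac{\Lambda}{k^\gamma}\int_\Omega (f+1)\,G_k(v_\varepsilon)$. Applying H\"older's inequality in the Lorentz pairing $(L^{N,\infty},L^{N/(N-1),1})$ and then the Lorentz-Sobolev embedding gives
\[
\|G_k(v_\varepsilon)\|_{BV(\Omega)} \leq \frac{\Lambda\, \tilde{\mathcal{S}}_1\, \|f+1\|_{L^{N,\infty}(\Omega)}}{k^\gamma}\, \|G_k(v_\varepsilon)\|_{BV(\Omega)}.
\]

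Choosing any $k$ with $k^\gamma>\Lambda\,\tilde{\mathcal{S}}_1\|f+1\|_{L^{N,\infty}(\Omega)}$ forces $\|G_k(v_\varepsilon)\|_{BV(\Omega)}=0$, hence $v_\varepsilon\leq k$ a.e.\ in $\Omega$, and letting $k$ approach the critical threshold delivers the stated bound. I do not expect a serious obstacle: the only delicate points are the correct use of Lemma~\ref{lem:Composition} to rewrite the pairing $(z_{v_\varepsilon},DG_k(v_\varepsilon))$ as $|DG_k(v_\varepsilon)|$ and the verification that the boundary term contributes with the right sign, both of which are standard once the compatibility conditions for $\DM$ fields are in place (they hold here because $\operatorname{div}z_{v_\varepsilon}\in L^\infty(\Omega)$ after the desingularization and truncation in~\eqref{eq:PbSupersol}).
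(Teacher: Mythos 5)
Your proposal is correct and follows essentially the same argument as the paper: testing with $G_k(v_\varepsilon)$, using the Green formula, Lemma~\ref{lem:Composition} and the boundary condition to get $\|G_k(v_\varepsilon)\|_{BV(\Omega)}$ on the left, then the Lorentz H\"older inequality and the embedding with constant $\tilde{\mathcal{S}}_1$ to absorb the right-hand side. The only cosmetic difference is that you bound $(v_\varepsilon+\varepsilon)^\gamma\geq k^\gamma$ and let $k$ decrease to the critical value, whereas the paper keeps $(k+\varepsilon)^\gamma$ and fixes $k$ exactly at the threshold, using the extra $\varepsilon$ to get strictness; both yield the stated constant.
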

\begin{proof}
Taking $G_k(v_\varepsilon)\in BV(\Omega)\cap L^\infty(\Omega)$ ($k>0$) as test function in~\eqref{eq:PbSupersol} we get, after using~\eqref{eq:Green}, that
\begin{equation}
    \label{eq:Pf_Prop_Bnd_1}
    \int_\Omega (z_{v_\varepsilon},D G_k(v_\varepsilon)) - \int_{\partial\Omega} G_k(v_\varepsilon)[z_{v_\varepsilon},\nu] \ \dH = \int_\Omega \frac{\Lambda (f_\varepsilon + 1)}{(v_\varepsilon+\varepsilon)^\gamma} G_k(v_\varepsilon).
\end{equation}
Observe that $v_\varepsilon[z_{v_\varepsilon},\nu] =- v_\varepsilon$ $\mathcal{H}^{N-1}$-a.e. on $\partial\Omega$ implies $G_k(v_\varepsilon)[z_{v_\varepsilon},\nu] =-G_k(v_\varepsilon)$ $\mathcal{H}^{N-1}$-a.e. on $\partial\Omega$ since $G_k$ is a Lipschitz function such that $G_k(s)s\geq 0$ for all $s\in\R$. Moreover, using that $(z_{v_\varepsilon}, DG_k(v_\varepsilon)) = |DG_k(v_\varepsilon)|$ as measures in $\Omega$ by Lemma~\ref{lem:Composition}, from~\eqref{eq:Pf_Prop_Bnd_1} we deduce 
\begin{equation*}
    \|G_k(v_\varepsilon) \|_{BV(\Omega)} = \int_\Omega |DG_k(v_\varepsilon)| + \int_{\partial\Omega} G_k(v_\varepsilon) \ \dH = \int_\Omega \frac{\Lambda (f_\varepsilon + 1)}{(v_\varepsilon+\varepsilon)^\gamma} G_k(v_\varepsilon).
\end{equation*}

Now, in the right-hand of the previous we use H\"{o}lder's and Sobolev's inequalities to obtain
\begin{equation*}
    \begin{split}
    \|G_k(v_\varepsilon) \|_{BV(\Omega)} &= \int_\Omega \frac{\Lambda (f_\varepsilon + 1)}{(v_\varepsilon+\varepsilon)^\gamma} G_k(v_\varepsilon) \leq \frac{\Lambda}{(k+\varepsilon)^\gamma} \int_\Omega (f+1) G_k(v_\varepsilon)\\
    &\leq \frac{\Lambda}{(k+\varepsilon)^\gamma} \|f+1\|_{L^{N,\infty}(\Omega)} \|G_k(v_\varepsilon)\|_{L^{\frac{N}{N-1},1}(\Omega)}
    \leq \frac{\Lambda \tilde{\mathcal{S}}_1}{(k+\varepsilon)^\gamma} \|f+1\|_{L^{N,\infty}(\Omega)}  \|G_k(v_\varepsilon) \|_{BV(\Omega)}.
    \end{split}
\end{equation*}

Then, we have that
\begin{equation}
    \label{eq:Pf_Prop_Bnd_2}
    \left( 1- \frac{\Lambda \tilde{\mathcal{S}}_1 \|f+1\|_{L^{N,\infty}(\Omega)}}{(k+\varepsilon)^\gamma} \right) \|G_k(v_\varepsilon) \|_{BV(\Omega)} \leq 0.
\end{equation}

Fixing $k=(\|f+1\|_{L^{N,\infty}(\Omega)} \tilde{\mathcal{S}}_1)^\frac{1}{\gamma} \Lambda^\frac{1}{\gamma}$, it is verified that
\[
1-\frac{\Lambda \tilde{\mathcal{S}}_1 \|f+1\|_{L^{N,\infty}(\Omega)}}{(k+\varepsilon)^\gamma} >0
\]
for every $\varepsilon>0$. Therefore, from~\eqref{eq:Pf_Prop_Bnd_2} we deduce that $\|G_k(v_\varepsilon)\|_{BV(\Omega)}=0$ for all $\varepsilon>0$, which means that $\|v_\varepsilon\|_{L^\infty(\Omega)} \leq k = (\tilde{\mathcal{S}}_1 \|f+1\|_{L^{N,\infty}(\Omega)})^\frac{1}{\gamma} \Lambda^\frac{1}{\gamma}$ for every $\varepsilon>0$. This concludes the proof.
\end{proof}

In the spirit of~\cite{B}, in the next result, we are able to fix $\Lambda$ so that $v_\varepsilon$ is a supersolution to~\eqref{eq:PbApprox} at least for $\lambda$ less than a constant $\overline{\lambda}$. We stress that both $\Lambda$ and $\overline{\lambda}$ are independent from $\varepsilon$ when $\varepsilon$ is small.

\begin{lemma}\label{lem:Supersol}
    Assume that $h$ satisfies~\eqref{eq:hyp_h} and that $g$ satisfies~\eqref{eq:hyp_g}. Let $f\in L^{N,\infty}(\Omega)$ be a non-negative function. Then there exists $\varepsilon_0>0$, $\Lambda>0$ and $\overline{\lambda}\in (0,\Lambda)$ such that for every $\varepsilon\in(0,\varepsilon_0)$ and for every $\lambda\leq \overline{\lambda}$ the solution $v_\varepsilon$ of~\eqref{eq:PbSupersol} is a supersolution to~\eqref{eq:PbApprox}.
\end{lemma}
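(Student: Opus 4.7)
Since $v_\varepsilon$ solves~\eqref{eq:PbSupersol}, its associated vector field $z_{v_\varepsilon}$ already provides all the structural ingredients needed to make $v_\varepsilon$ a supersolution to~\eqref{eq:PbApprox}: $\|z_{v_\varepsilon}\|_{L^\infty(\Omega)^N}\le 1$, $\operatorname{div} z_{v_\varepsilon}\in L^N(\Omega)$, $(z_{v_\varepsilon},Dv_\varepsilon)=|Dv_\varepsilon|$ as measures in $\Omega$, and $|v_\varepsilon|+v_\varepsilon[z_{v_\varepsilon},\nu]=0$ $\mathcal{H}^{N-1}$-$\ae$ on $\partial\Omega$ (this coincides with the boundary condition of supersolution since $v_\varepsilon\ge 0$). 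Therefore the only remaining task is to verify the distributional inequality
\begin{equation*}
-\operatorname{div} z_{v_\varepsilon}\;=\;\frac{\Lambda(f_\varepsilon+1)}{(v_\varepsilon+\varepsilon)^\gamma}\;\ge\;\lambda h_\varepsilon(v_\varepsilon) f_\varepsilon+g(x,v_\varepsilon),
\end{equation*}
which, since both sides lie in $L^\infty(\Omega)$, is equivalent to the corresponding pointwise inequality $\ae$ in $\Omega$.

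\textbf{Pointwise reduction.}
Combining \eqref{eq:hyp_h} with \eqref{eq:hyp_g} yields
\begin{equation*}
\lambda h_\varepsilon(v_\varepsilon) f_\varepsilon + g(x,v_\varepsilon)\;\le\;\frac{\lambda f_\varepsilon}{(v_\varepsilon+\varepsilon)^\gamma}+\kappa(v_\varepsilon),
\end{equation*}
so I split the target inequality into two independent requirements: (i) $\Lambda\ge\lambda$, which takes care of the $f_\varepsilon$-part, and (ii) $\dfrac{\Lambda}{(v_\varepsilon+\varepsilon)^\gamma}\ge \kappa(v_\varepsilon)$ $\ae$ in $\Omega$, which absorbs the $g$-term into the ``$+1$'' contribution on the right-hand side of~\eqref{eq:PbSupersol}.

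\textbf{Choice of $\Lambda$ and $\varepsilon_0$.}
By Proposition~\ref{prop:Bound}, $\|v_\varepsilon\|_{L^\infty(\Omega)}\le M(\Lambda):=C\,\Lambda^{1/\gamma}$ with $C=\bigl(\tilde{\mathcal{S}}_1\|f+1\|_{L^{N,\infty}(\Omega)}\bigr)^{1/\gamma}$, a bound uniform in $\varepsilon$. Since we may assume $\kappa$ non-decreasing, (ii) is implied by the single numerical inequality
\begin{equation*}
\frac{\Lambda}{(M(\Lambda)+\varepsilon)^\gamma}\;\ge\;\kappa(M(\Lambda)).
\end{equation*}
The crucial observation is that $\Lambda/M(\Lambda)^\gamma=1/C^\gamma$ is a positive constant independent of $\Lambda$, while $\kappa(M(\Lambda))\to 0$ as $\Lambda\to 0^+$ because $\kappa$ is continuous with $\kappa(0)=0$. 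I thus first fix $\Lambda>0$ small enough so that $\kappa(M(\Lambda))<1/(2C^\gamma)$, and subsequently pick $\varepsilon_0>0$ so small that $(M(\Lambda)+\varepsilon)^\gamma\le 2\,M(\Lambda)^\gamma$ for every $\varepsilon\in(0,\varepsilon_0)$; this forces
\begin{equation*}
\frac{\Lambda}{(M(\Lambda)+\varepsilon)^\gamma}\;\ge\;\frac{1}{2C^\gamma}\;>\;\kappa(M(\Lambda))\;\ge\;\kappa(v_\varepsilon),
\end{equation*}
and (ii) follows. Finally any choice $\overline\lambda\in(0,\Lambda)$ secures (i) for all $\lambda\le\overline\lambda$.

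\textbf{Main obstacle.}
The only nontrivial point is that $\Lambda$, $\varepsilon_0$ and $\overline\lambda$ must all be selected uniformly in $\varepsilon$; this is made possible precisely by the $\varepsilon$-independent scaling $M(\Lambda)\sim\Lambda^{1/\gamma}$ in Proposition~\ref{prop:Bound}, whose key consequence is that $\Lambda/M(\Lambda)^\gamma$ stays bounded away from zero as $\Lambda\to 0^+$, so that the lower-order superlinear term $g$ can be controlled by the constant contribution on the right-hand side of~\eqref{eq:PbSupersol} no matter how small $\Lambda$ is taken.
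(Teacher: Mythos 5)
Your proof is correct and follows essentially the same route as the paper: both reduce the supersolution property to the pointwise inequality $\frac{\Lambda(f_\varepsilon+1)}{(v_\varepsilon+\varepsilon)^\gamma}\ge \lambda h_\varepsilon(v_\varepsilon)f_\varepsilon+g(x,v_\varepsilon)$ and exploit the $\varepsilon$-uniform bound $\|v_\varepsilon\|_{L^\infty(\Omega)}\le C\Lambda^{1/\gamma}$ of Proposition~\ref{prop:Bound} together with the continuity of $\kappa$ and $\kappa(0)=0$ to fix $\Lambda$, $\varepsilon_0$ and $\overline{\lambda}$ independently of $\varepsilon$. The only (harmless) differences are bookkeeping: you pair the singular term with $f_\varepsilon$ and the $g$-term with the ``$+1$'' and take $\Lambda$ small, whereas the paper divides through by $f_\varepsilon+1$ and fixes $\Lambda$ at the maximum of $s\mapsto s\bigl(1-C_1C^\gamma\kappa\bigl(Cs^{1/\gamma}\bigr)\bigr)$.
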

\begin{proof}
In order to prove the lemma, it is enough to find $\varepsilon_0>0$, $\Lambda>0$ and $0<\overline{\lambda}<\Lambda$ such that
\[
\frac{\Lambda (f_\varepsilon+1)}{(v_\varepsilon+\varepsilon)^\gamma} \geq \lambda h_\varepsilon(v_\varepsilon) f_\varepsilon + g(x,v_\varepsilon)
\]
for every $\varepsilon\in(0,\varepsilon_0)$ and every $\lambda\leq \overline{\lambda}$. The previous is equivalent to show that
\[
\frac{\lambda h_\varepsilon(v_\varepsilon) (v_\varepsilon+\varepsilon)^\gamma f_\varepsilon}{f_\varepsilon +1} + \frac{g(x,v_\varepsilon) (v_\varepsilon+\varepsilon)^\gamma}{f_\varepsilon +1} \leq \Lambda.
\]
Let us do some computations on the left-hand of the previous. First, observe that $h_\varepsilon(v_\varepsilon)(v_\varepsilon+\varepsilon)^\gamma \leq 1$ by~\eqref{eq:hyp_h} and that $g(x,v_\varepsilon) \leq \kappa(v_\varepsilon)$ by~\eqref{eq:hyp_g}. Thus, recalling that $\kappa(s)$ is increasing and that $\|v_\varepsilon\|_{L^\infty(\Omega)} \leq C\Lambda^{\frac{1}{\gamma}}$ by Proposition~\ref{prop:Bound}, we obtain
\begin{align*}
    \frac{\lambda h_\varepsilon(v_\varepsilon)(v_\varepsilon+\varepsilon)^\gamma f_\varepsilon}{f_\varepsilon+1} + \frac{g(x,v_\varepsilon)(v_\varepsilon+\varepsilon)^\gamma}{f_\varepsilon + 1}
    & \leq \frac{\lambda f_\varepsilon}{f_\varepsilon +1} + \frac{ \kappa(v_\varepsilon) (v_\varepsilon+\varepsilon)^\gamma}{f_\varepsilon +1} 
    \leq \lambda + \kappa(v_\varepsilon)(v_\varepsilon+\varepsilon)^\gamma \\
    & \leq \lambda + \kappa \big(C\Lambda^\frac{1}{\gamma} \big) \big(C\Lambda^\frac{1}{\gamma} + \varepsilon \big)^\gamma
    \leq \lambda + C_1 \kappa \big(C\Lambda^\frac{1}{\gamma} \big) (C^\gamma\Lambda + \varepsilon^\gamma).
\end{align*}
Now, we would like to get $\lambda + C_1 \kappa \big(C\Lambda^\frac{1}{\gamma} \big) (C^\gamma\Lambda + \varepsilon^\gamma) \leq \Lambda$, i.e. $\lambda\leq \Lambda \Big(1-C_1 C^\gamma \kappa\big( C\Lambda^\frac{1}{\gamma}\big) \Big) -\varepsilon^\gamma C_1 \kappa \big(C\Lambda^\frac{1}{\gamma} \big)$. Observe that function $s \mapsto s\Big(1- C_1 C^\gamma \kappa \big(Cs^\frac{1}{\gamma}\big) \Big)$ is positive near the origin since $\kappa(0)=0$, so we can fix $\Lambda$ as the point where this function reaches its maximum and we can take $\overline{\lambda} < \Lambda \Big(1-C_1C^\gamma \kappa \big(C\Lambda^\frac{1}{\gamma} \big) \Big)$. Finally, since $\varepsilon^\gamma C_1 \kappa \big(C\Lambda^\frac{1}{\gamma} \big) \to 0$ as $\varepsilon\to 0^+$, there exists $\varepsilon_0>0$ such that
\[
\lambda \leq \Lambda \Big(1-C_1 C^\gamma \kappa\big( C\Lambda^\frac{1}{\gamma}\big) \Big) -\varepsilon^\gamma C_1 \kappa \big(C\Lambda^\frac{1}{\gamma} \big)
\]
for all $\varepsilon\in(0,\varepsilon_0)$ and all $\lambda\leq \overline{\lambda}$. This concludes the proof.
\end{proof}

We are ready to show that there exists a solution $u_\varepsilon$ to the approximated problems~\eqref{eq:PbApprox}.
\begin{proposition} \label{prop:Approx}
    Assume that $h$ satisfies~\eqref{eq:hyp_h}, that $g$ verifies~\eqref{eq:hyp_g} and that~\eqref{eq:hyp_l_incr}--\eqref{eq:hyp_l_inf} hold. Let $f\in L^{N,\infty}(\Omega)$ be a non-negative function. Then there exists $\varepsilon_0>0$, $\Lambda>0$ and $\overline{\lambda}\in (0,\Lambda)$ such that for every $\varepsilon\in(0,\varepsilon_0)$ and for every $\lambda\leq \overline{\lambda}$ there exists a non-negative solution $u_\varepsilon\in BV(\Omega)\cap L^\infty(\Omega)$ to \eqref{eq:PbApprox} such that $\|u_\varepsilon\|_{L^\infty(\Omega)} \leq C$, with $C>0$ not depending on $\varepsilon$.
\end{proposition}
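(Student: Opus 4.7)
The plan is to apply Theorem \ref{th:SS} to \eqref{eq:PbApprox} using $w_\varepsilon$ as a subsolution and $v_\varepsilon$ as a supersolution. Fix $\varepsilon_0,\Lambda,\overline{\lambda}$ as in Lemma \ref{lem:Supersol} and take $\varepsilon\in(0,\varepsilon_0)$, $\lambda\le\overline{\lambda}$. Lemma \ref{lem:Supersol} already furnishes $v_\varepsilon$ as a supersolution of \eqref{eq:PbApprox}; since $g\ge 0$, the identity $-\Delta_1 w_\varepsilon = \lambda h_\varepsilon(w_\varepsilon) f_\varepsilon \le \lambda h_\varepsilon(w_\varepsilon) f_\varepsilon + g(x,w_\varepsilon)$ immediately says that $w_\varepsilon$ is a subsolution of \eqref{eq:PbApprox}. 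The non-trivial task is to establish $w_\varepsilon\le v_\varepsilon$ a.e. in $\Omega$, which is precisely the remaining hypothesis of Theorem \ref{th:SS}.

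\textbf{The key comparison.} The delicate point, and the main obstacle, is the comparison $w_\varepsilon\le v_\varepsilon$. Because the right-hand sides of \eqref{eq:PbSubsol} and \eqref{eq:PbSupersol} are \emph{not} non-decreasing in $u$, Theorem \ref{th:MP_Incr} is not directly applicable, so I would invoke Theorem \ref{th:MP_Nondecr} with $l\equiv 0$. On the set $E=\{w_\varepsilon>v_\varepsilon\}$, the bound \eqref{eq:hyp_h}, the positivity $f>0$ a.e., and $\lambda\le\overline{\lambda}<\Lambda$ give
\[
\lambda h_\varepsilon(w_\varepsilon) f_\varepsilon \le \frac{\lambda f_\varepsilon}{(w_\varepsilon+\varepsilon)^\gamma} < \frac{\lambda f_\varepsilon}{(v_\varepsilon+\varepsilon)^\gamma} \le \frac{\Lambda(f_\varepsilon+1)}{(v_\varepsilon+\varepsilon)^\gamma} \quad \text{a.e. on } E.
\]
Both sides are bounded (hence in $L^N(\Omega)$), since $f_\varepsilon\le 1/\varepsilon$, $h_\varepsilon\le \varepsilon^{-\gamma}$ and $v_\varepsilon\ge 0$; moreover $w_\varepsilon,v_\varepsilon$ are almost $1$-harmonic by construction. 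Theorem \ref{th:MP_Nondecr} then yields $w_\varepsilon\le v_\varepsilon$ a.e. in $\Omega$.

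\textbf{Conclusion via Theorem \ref{th:SS} and uniform bound.} Once the ordering holds, the remaining hypotheses of Theorem \ref{th:SS} for $F(x,s)=\lambda h_\varepsilon(s) f_\varepsilon(x)+g(x,s)$ and $l(x,s)=l_{\varepsilon,\lambda}(x,s)$ are exactly \eqref{eq:hyp_l_incr}--\eqref{eq:hyp_l_inf}; the integrability \eqref{eq:hyp_ss_LN} at $w_\varepsilon$ and $v_\varepsilon$ follows because these functions are bounded, so the monotonicity in \eqref{eq:hyp_l_inf} bounds $l_{\varepsilon,\lambda}(x,w_\varepsilon)$ and $l_{\varepsilon,\lambda}(x,v_\varepsilon)$ a.e.~by $l_{\varepsilon,\lambda}(x,M)$ with $M=\max\{\|w_\varepsilon\|_\infty,\|v_\varepsilon\|_\infty\}$, which lies in $L^N(\Omega)$ by \eqref{eq:hyp_l_LN}. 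Theorem \ref{th:SS} then supplies $u_\varepsilon\in BV(\Omega)\cap L^\infty(\Omega)$ solving \eqref{eq:PbApprox} with $w_\varepsilon\le u_\varepsilon\le v_\varepsilon$. The uniform $L^\infty$ estimate comes directly from the upper barrier: Proposition \ref{prop:Bound} gives
\[
\|u_\varepsilon\|_{L^\infty(\Omega)} \le \|v_\varepsilon\|_{L^\infty(\Omega)} \le \bigl(\tilde{\mathcal{S}}_1\,\|f+1\|_{L^{N,\infty}(\Omega)}\bigr)^{1/\gamma}\Lambda^{1/\gamma},
\]
with the right-hand side independent of $\varepsilon$, finishing the proof.
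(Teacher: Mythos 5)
Your proposal is correct and takes essentially the same route as the paper: Lemma~\ref{lem:Supersol} supplies the supersolution $v_\varepsilon$, the solution $w_\varepsilon$ of \eqref{eq:PbSubsol} is the subsolution, the ordering $w_\varepsilon\le v_\varepsilon$ is obtained from Theorem~\ref{th:MP_Nondecr} with $l\equiv 0$ using \eqref{eq:hyp_h}, and Theorem~\ref{th:SS} together with Proposition~\ref{prop:Bound} concludes with the $\varepsilon$-independent bound. One small remark: since the proposition only assumes $f\ge 0$, the strict inequality required by Theorem~\ref{th:MP_Nondecr} on $\{w_\varepsilon>v_\varepsilon\}$ should be drawn from $\lambda f_\varepsilon<\Lambda(f_\varepsilon+1)$ (which holds because $\lambda\le\overline{\lambda}<\Lambda$), rather than from the middle step relying on $f>0$ a.e.; your chain of inequalities still delivers the needed strict comparison between its two endpoints.
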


\begin{proof}
Our aim is to apply Theorem~\ref{th:SS}. In what follows, we check that all its hypotheses are satisfied.

Let $\varepsilon_0>0$, $\Lambda>0$ and $\overline{\lambda}\in (0,\Lambda)$ be the ones given by Lemma~\ref{lem:Supersol}. We fix $\varepsilon\in(0,\varepsilon_0)$ and $\lambda \leq \overline\lambda$. Let $w_\varepsilon \in BV(\Omega)\cap L^\infty(\Omega)$ be the solution to~\eqref{eq:PbSubsol} found, as already mentioned, in \cite[Theorem~3.3]{DGOP} and let $v_\varepsilon \in BV(\Omega)\cap L^\infty(\Omega)$ be the solution to~\eqref{eq:PbSupersol}. Of course, $w_\varepsilon$ is a subsolution and, by Lemma~\ref{lem:Supersol}, $v_\varepsilon$ is a supersolution to \eqref{eq:PbApprox}.

Let us first show that $w_\varepsilon$ and  $v_\varepsilon$ are ordered in $\Omega$. Indeed $w_\varepsilon$ and $v_\varepsilon$ are two almost 1-harmonic functions satisfying, due to~\eqref{eq:hyp_h}, that
\begin{equation*}
\lambda h_\varepsilon(w_\varepsilon)f_\varepsilon < \frac{\Lambda (f_\varepsilon+ 1)}{(v_\varepsilon + \varepsilon)^\gamma}\ \ \text{in } \{w_\varepsilon > v_\varepsilon\},
\end{equation*}
which allows to apply Theorem~\ref{th:MP_Nondecr} in order to deduce that $w_\varepsilon\leq v_\varepsilon$ almost everywhere in $\Omega$.

On the other hand, since both $w_\varepsilon$ and $v_\varepsilon$ are in $L^\infty(\Omega)$, by~\eqref{eq:hyp_l_LN} we have that
\begin{gather*}
    l_{\varepsilon,\lambda}(x,w_\varepsilon),\ l_{\varepsilon,\lambda}(x,v_\varepsilon) \in L^N(\Omega).
\end{gather*}

Therefore, we can apply Theorem~\ref{th:SS} to assure the existence of a solution $u_\varepsilon \in BV(\Omega)\cap L^\infty(\Omega)$ of~\eqref{eq:PbApprox} such that $w_\varepsilon \leq u_\varepsilon \leq v_\varepsilon$ almost everywhere in $\Omega$. Lastly, by Proposition~\ref{prop:Bound} we conclude that $\|u_\varepsilon\|_{L^\infty(\Omega)} \leq C,\ \forall \varepsilon\in (0,\varepsilon_0)$. 
\end{proof}

Now we show that the sequence $u_\varepsilon$ of solutions of~\eqref{eq:PbApprox} given by Proposition~\ref{prop:Approx} tends to a solution to~\eqref{eq:PbMain}. Henceforth, we shall consider $\varepsilon\in (0,\varepsilon_0)$, where $\varepsilon_0$ is that of Proposition~\ref{prop:Approx}. We denote by $z_{u_{\varepsilon}}\in\DM(\Omega)$ the associated vector field to $u_\varepsilon$.
\medskip

The next result shows that sequence $u_\varepsilon$ is always bounded in $BV(\Omega)$ independently of the value $\gamma$. To prove that, we use the idea presented in~\cite{MAOP}.

\begin{lemma}  
Assume that $h$ satisfies~\eqref{eq:hyp_h}, that $g$ verifies~\eqref{eq:hyp_g} and that~\eqref{eq:hyp_l_incr}--\eqref{eq:hyp_l_inf} hold. Let $f\in L^{N,\infty}(\Omega)$ be a non-negative function. Let $\overline{\lambda}>0$ be the one given by Proposition~\ref{prop:Approx} and fix $\lambda\leq\overline{\lambda}$. Then, we have that $u_\varepsilon$ is bounded in $BV(\Omega)$ with respect to $\varepsilon$.
In particular, there exists $u\in BV(\Omega)\cap L^\infty(\Omega)$ such that, up to a subsequence, $u_\varepsilon\to u$ in $L^q(\Omega)$ for $1\leq q<\infty$ and $u_\varepsilon\to u$ almost everywhere in $\Omega$.
\end{lemma}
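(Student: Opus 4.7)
The plan is to test the approximated equation~\eqref{eq:PbApprox} with two carefully chosen functions in order to directly bound $\|u_\varepsilon\|_{BV(\Omega)}$. The key observation, following the spirit of~\cite{MAOP}, is that although $\int_\Omega u_\varepsilon h_\varepsilon(u_\varepsilon) f_\varepsilon$ might seem dangerous when $\gamma > 1$, the uniform $L^\infty$-bound on $u_\varepsilon$ given by Proposition~\ref{prop:Approx} allows us to extract it from the integral, reducing the problem to controlling the $L^1$-norm of the right-hand side of the equation.

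First I would apply the Green formula~\eqref{eq:Green} to $u\equiv 1 \in BV(\Omega)\cap L^\infty(\Omega)$ and $z=z_{u_\varepsilon}$. Since $Du=0$ and the trace equals $1$, this collapses to $\int_\Omega \operatorname{div} z_{u_\varepsilon} = \int_{\partial\Omega} [z_{u_\varepsilon},\nu]\, \dH$, whence by the distributional formulation~\eqref{def:distrp=1} together with the non-negativity of both $h_\varepsilon(u_\varepsilon) f_\varepsilon$ and $g(x,u_\varepsilon)$ (for fixed $\varepsilon$ these are $L^\infty$ and in particular in $L^1$) and the bound $\|[z_{u_\varepsilon},\nu]\|_{L^\infty(\partial\Omega)}\le 1$,
\[
\int_\Omega \lambda h_\varepsilon(u_\varepsilon) f_\varepsilon + \int_\Omega g(x,u_\varepsilon) = -\int_{\partial\Omega} [z_{u_\varepsilon},\nu] \,\dH \le \mathcal{H}^{N-1}(\partial\Omega).
\]
Note that this $L^1$-bound on the right-hand side of~\eqref{eq:PbApprox} is uniform in $\varepsilon$ and $\lambda$.

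Next I would take $u_\varepsilon \in BV(\Omega)\cap L^\infty(\Omega)$ itself as test function in~\eqref{eq:PbApprox}. Applying~\eqref{eq:Green}, using $(z_{u_\varepsilon}, Du_\varepsilon) = |Du_\varepsilon|$ as measures in $\Omega$ (property~\eqref{def:zp=1}) and $u_\varepsilon[z_{u_\varepsilon},\nu] = -u_\varepsilon$ on $\partial\Omega$ (property~\eqref{def:bordo}), the left-hand side becomes exactly $\|u_\varepsilon\|_{BV(\Omega)} = \int_\Omega |Du_\varepsilon| + \int_{\partial\Omega} u_\varepsilon \, \dH$. Hence, by the previous step and Proposition~\ref{prop:Approx},
\[
\|u_\varepsilon\|_{BV(\Omega)} = \int_\Omega u_\varepsilon\bigl(\lambda h_\varepsilon(u_\varepsilon)f_\varepsilon + g(x,u_\varepsilon)\bigr) \le \|u_\varepsilon\|_{L^\infty(\Omega)}\,\mathcal{H}^{N-1}(\partial\Omega) \le C,
\]
with $C>0$ independent of $\varepsilon$. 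This is the desired uniform $BV$-estimate.

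Finally, the convergence claims follow by standard arguments: the compact embedding $BV(\Omega)\hookrightarrow L^1(\Omega)$ yields, up to a subsequence, $u_\varepsilon\to u$ in $L^1(\Omega)$ and a.e.\ in $\Omega$; the lower semicontinuity of the $BV$-norm under $L^1$-convergence gives $u\in BV(\Omega)$, while the uniform $L^\infty$-bound together with a.e. convergence implies $u\in L^\infty(\Omega)$. Interpolating then upgrades the convergence to $L^q(\Omega)$ for every $1\le q<\infty$. The main (already-resolved) obstacle is the potential non-integrability of $u_\varepsilon h_\varepsilon(u_\varepsilon) f_\varepsilon$ when $\gamma > 1$: the detour through the constant test function bypasses this by never asking for pointwise control of $u_\varepsilon h_\varepsilon(u_\varepsilon)$, trading it for the uniform $L^\infty$-bound of $u_\varepsilon$.
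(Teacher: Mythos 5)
Your proof is correct and uses the same ingredients as the paper's: the non-negativity of the right-hand side of \eqref{eq:PbApprox}, the uniform $L^\infty$-bound from Proposition~\ref{prop:Approx}, and $\|[z_{u_\varepsilon},\nu]\|_{L^\infty(\partial\Omega)}\le 1$, arriving at the same bound $C\,\mathcal{H}^{N-1}(\partial\Omega)$. The only difference is cosmetic: the paper tests once with $u_\varepsilon-\|u_\varepsilon\|_{L^\infty(\Omega)}$, which is precisely the combination of your two tests (with $u_\varepsilon$ and with a constant), so the arguments are essentially identical.
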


\begin{proof} Since the right-hand of~\eqref{eq:PbApprox} is non-negative, taking $(u_\varepsilon-\|u_\varepsilon\|_{L^\infty(\Omega)}) \in BV(\Omega)\cap L^\infty(\Omega)$ as test function in \eqref{eq:PbApprox}, one deduces
\begin{equation*}
	-\int_\Omega (u_\varepsilon-\|u_\varepsilon\|_{L^\infty(\Omega)}) \, \operatorname{div}z_{u_{\varepsilon}} \leq 0.
\end{equation*}
Applying~\eqref{eq:Green} one gets
\begin{equation*}
    \int_\Omega (z_{u_{\varepsilon}},D(u_\varepsilon-\|u_\varepsilon\|_{L^\infty(\Omega)})) - \int_{\partial\Omega} (u_\varepsilon-\|u_\varepsilon\|_{L^\infty(\Omega)})[z_{u_{\varepsilon}},\nu] \ \dH \leq 0.
\end{equation*}
Recalling that $(z_{u_{\varepsilon}},Du_\varepsilon)=|Du_\varepsilon|$ as measures in $\Omega$ and that $u_\varepsilon[z_{u_{\varepsilon}},\nu] = -u_\varepsilon$ $\mathcal{H}^{N-1}$-$\mathrm{a.e.}$ on $\partial\Omega$, we deduce
\begin{equation*}
    \int_\Omega |Du_\varepsilon| + \int_{\partial\Omega} u_\varepsilon \ \dH +  \int_{\partial\Omega} \|u_\varepsilon\|_{L^\infty(\Omega)} [z_{u_{\varepsilon}},\nu] \ \dH \leq 0.
\end{equation*}
Moreover from the previous and from $\|u_\varepsilon\|_{L^\infty(\Omega)} \leq C$, one gains
\begin{align*}
	\|u_\varepsilon\|_{BV(\Omega)} \leq -\int_{\partial\Omega} \|u_\varepsilon\|_{L^\infty(\Omega)} [z_{u_{\varepsilon}},\nu] \ \dH \leq \|u_\varepsilon\|_{L^\infty(\Omega)} \mathcal{H}^{N-1}(\partial\Omega) \leq C \mathcal{H}^{N-1}(\partial\Omega),
\end{align*}
where we have used that $\|[z_{u_{\varepsilon}},\nu]\|_{L^\infty(\partial\Omega)}\leq 1$. This concludes the proof.
\end{proof}

Now, we are in position to prove Theorem~\ref{th:CC_exist}.

\begin{proof}[Proof of Theorem~\ref{th:CC_exist}]
\mbox{}
Let $u_\varepsilon$ be the solution to \eqref{eq:PbApprox} whose existence has been proved in Proposition \ref{prop:Approx}. We fix $\lambda\leq\overline{\lambda}$. For the sake of clarity, we divide the proof into several steps. 

	\smallskip

    \textbf{Step 1.} Existence of the vector field $z$.

    As $\|z_{u_{\varepsilon}}\|_{L^\infty(\Omega)^N}\le 1$, then there exists $z\in L^\infty(\Omega)^N$ such that $z_{u_{\varepsilon}}\rightharpoonup z$ *-weakly in $L^\infty(\Omega)^N$. Since the norm is *-weakly lower semicontinuous, then we also have $\|z\|_{L^\infty(\Omega)^N}\leq 1$.

	\smallskip

    \textbf{Step 2.} Distributional formulation \eqref{def:distrp=1}.
    
    Taking $0\leq \varphi\in C_c^1(\Omega)$ as test function in the weak formulation of \eqref{eq:PbApprox}, one has
    \begin{equation}\label{eq:Proof_Main_0}
    	\int_\Omega z_{u_{\varepsilon}} \cdot \nabla \varphi = \int_\Omega \lambda h_{\varepsilon}(u_\varepsilon)f_\varepsilon \varphi + \int_\Omega g(x,u_\varepsilon) \varphi.     	
    \end{equation}
    Now we pass to the limit \eqref{eq:Proof_Main_0} as $\varepsilon\to 0$. Let observe that $z_{u_{\varepsilon}}\rightharpoonup z$ *-weakly in $L^\infty(\Omega)^N$; moreover the second term on the right-hand of \eqref{eq:Proof_Main_0} simply passes to the limit as $u_\varepsilon$ is bounded in $L^\infty(\Omega)$ with respect to $\varepsilon$ thanks to Proposition \ref{prop:Approx}. Then, using the Fatou Lemma in the first term on the right-hand of \eqref{eq:Proof_Main_0}, one yields to 
    \begin{equation}
    \label{eq:Proof_Main_1}
    \int_\Omega \lambda h(u)f\varphi \leq 
    \int_\Omega z \cdot \nabla \varphi - \int_\Omega g(x,u) \varphi.
    \end{equation}
    
Hence  $h(u)f\in L^1_\mathrm{loc}(\Omega)$ and  $\{u=0\}\subseteq \{f=0\}$ since $h(0)=\infty$. This allows to deduce that $u>0$ almost everywhere in $\Omega$ as $f>0$.
    Then, in order to prove \eqref{def:distrp=1}, remains to deal with the singular term. For $0\leq \varphi\in C_c^1(\Omega)$ we split it into two integrals, namely
    \begin{equation}
    \label{eq:Proof_Main_2}
    \int_\Omega \lambda h_{\varepsilon}(u_\varepsilon)f_\varepsilon\varphi = \int_{\{u_\varepsilon \leq \delta\}} \lambda h_{\varepsilon}(u_\varepsilon) f_\varepsilon\varphi + \int_{\{u_\varepsilon> \delta\}} \lambda h_{\varepsilon}(u_\varepsilon) f_\varepsilon\varphi.
    \end{equation}

    First, we control the singular term near zero, i.e.   the first integral on the right-hand of~\eqref{eq:Proof_Main_2}. To do that, we take $V_\delta(u_\varepsilon)\varphi\in BV(\Omega)\cap L^\infty(\Omega)$ where $0\leq \varphi\in C_c^1(\Omega)$ and $V_\delta$ is defined in \eqref{not:Vdelta}, as test function in the weak formulation of \eqref{eq:PbApprox}, obtaining after  applying \eqref{eq:Pairing}  that
    \[
    \int_\Omega (z_{u_{\varepsilon}}, DV_\delta(u_\varepsilon)) \varphi + \int_\Omega z_{u_{\varepsilon}} \cdot \nabla\varphi \, V_\delta(u_\varepsilon) = \int_\Omega \lambda h_{\varepsilon}(u_\varepsilon)f_\varepsilon V_\delta(u_\varepsilon)\varphi + \int_\Omega g(x,u_\varepsilon) V_\delta(u_\varepsilon)\varphi.
    \]
    Since $(z_{u_{\varepsilon}}, DV_\delta(u_\varepsilon)) = -|DV_\delta(u_\varepsilon)|$ as measures in $\Omega$ by Lemma~\ref{lem:Composition}, we get
    \[
    \int_{\{u_\varepsilon\leq \delta\}} \lambda h_{\varepsilon}(u_\varepsilon) f_\varepsilon\varphi \leq \int_\Omega z_{u_{\varepsilon}} \cdot \nabla\varphi \, V_\delta(u_\varepsilon) - \int_\Omega g(x,u_\varepsilon) V_\delta(u_\varepsilon)\varphi.
    \]
    Now, we take limits when $\varepsilon\to 0^+$. In the second integral we use that $z_{u_{\varepsilon}} \to z$ *-weakly in $L^\infty(\Omega)^N$ and that $V_\delta(u_\varepsilon)\to V_\delta(u)$ in $L^1(\Omega)$, and in the last integral we apply Lebesgue Theorem by using that $u_\varepsilon\to u$ $\mathrm{a.e.}$ in $\Omega$, that $u_\varepsilon$ is bounded in $L^\infty(\Omega)$ and hypothesis~\eqref{eq:hyp_g}. Thus, we have
    \[
    0\leq \limsup_{\varepsilon\to 0^+} \int_{\{u_\varepsilon \leq \delta\}} \lambda h_{\varepsilon}(u_\varepsilon) f_\varepsilon\varphi \leq \int_\Omega z \cdot \nabla\varphi \, V_\delta(u) - \int_\Omega g(x,u) V_\delta(u)\varphi.
    \]
    Then, as $\delta \to 0^+$, the Lebesgue Theorem implies
    \begin{equation}
    \label{eq:Proof_Main_3}
        0\leq \lim_{\delta\to 0^+} \limsup_{\varepsilon \to 0^+} \int_{\{u_\varepsilon\leq \delta\}} \lambda h_{\varepsilon}(u_\varepsilon) f_\varepsilon\varphi \leq \int_{\{u=0\}} z \cdot \nabla\varphi \,  - \int_{\{u=0\}} g(x,u) \varphi = 0,
    \end{equation}
    where the last equality is due to the fact that $u>0$ almost everywhere in $\Omega$.

It remains to deal with the second integral on the right-hand of \eqref{eq:Proof_Main_2}. Without loss of generality, we can always assume that $\delta \notin \{\eta>0:|\{u=\eta\}|>0\}$ since this set is at most countable. In particular, this implies that $\chi_{\{u_\varepsilon>\delta\}}$ converges to $\chi_{\{u>\delta\}}$ $\mathrm{a.e.}$ in $\Omega$ as $\varepsilon\to 0^+$. Moreover, we have both
    \[
    h_{\varepsilon}(u_\varepsilon) f_\varepsilon \chi_{\{u_\varepsilon >\delta\}} \varphi \leq \sup_{s\in (\delta,\infty)} h(s) f\varphi \in L^1(\Omega)
    \]
    and
    \[
    h(u) f \chi_{\{u>\delta\}} \varphi \leq h(u) f \varphi \overset{\eqref{eq:Proof_Main_1}}{\in}L^1(\Omega),
    \]
    so we can apply Lebesgue Theorem to deduce 
    \begin{equation}
    \label{eq:Proof_Main_4}
    \lim_{\delta\to 0^+} \lim_{\varepsilon\to 0^+} \int_{\{u_\varepsilon > \delta\}} \lambda h_{\varepsilon}(u_\varepsilon) f_\varepsilon\varphi = \int_{\{u > 0\}} \lambda h(u)f\varphi = \int_\Omega \lambda h(u)f\varphi
    \end{equation}
    since $u>0$. From~\eqref{eq:Proof_Main_2},~\eqref{eq:Proof_Main_3} and~\eqref{eq:Proof_Main_4} we obtain
    \begin{equation}
    \label{eq:Proof_Main_5}
    \lim_{\varepsilon\to 0^+} \int_\Omega \lambda h_{\varepsilon}(u_\varepsilon) f_\varepsilon\varphi = \int_\Omega \lambda h(u)f\varphi.
    \end{equation}
    Therefore, once that the previous is in force, we can pass to the limit \eqref{eq:Proof_Main_0} as $\varepsilon\to 0^+$ in order to get
    \begin{equation*}
    \int_\Omega z \cdot \nabla \varphi = \int_\Omega \lambda h(u)f\varphi + \int_\Omega g(x,u) \varphi,
    \end{equation*}
    for any $0\leq \varphi\in C_c^1(\Omega)$. Here, we have used that $z_{u_{\varepsilon}}\rightharpoonup z$ *-weakly in $L^\infty(\Omega)^N$, equation~\eqref{eq:Proof_Main_5} and that $\int_\Omega g(x,u_\varepsilon) \varphi\to \int_\Omega g(x,u) \varphi$ since $u_\varepsilon$ is bounded in $L^\infty(\Omega)$. It is immediate to see that this equality, which has been proved for non-negative $\varphi\in C_c^1(\Omega)$, is also true for any $\varphi\in C_c^1(\Omega)$.

	\smallskip

    \textbf{Step 3.} Test functions allowed.

    We have proved that 
    \[
    -\operatorname{div} z = \lambda h(u)f + g(x,u) \quad \text{in } \mathcal{D}'(\Omega)
    \]
    and that $\lambda h(u)f + g(x,u)\in L^1_{\mathrm{loc}}(\Omega)$ (see~\eqref{eq:Proof_Main_1} and recall that $u\in L^\infty(\Omega)$). Applying~\cite[Lemma~5.3]{DGOP} we deduce that $\lambda h(u)f + g(x,u)\in L^1(\Omega)$ and that
    \[
    -\int_\Omega \varphi\operatorname{div} z = \int_\Omega \lambda h(u)f\varphi + \int_\Omega g(x,u)\varphi,\ \forall \varphi\in BV(\Omega)\cap L^\infty(\Omega).
    \]

	\smallskip

    \textbf{Step 4.} Identification of $z$ given by \eqref{def:zp=1}.

    We can take $u_\varepsilon \varphi \in BV(\Omega)\cap L^\infty(\Omega)$ with $0\leq \varphi\in C_c^1(\Omega)$ as test function in~\eqref{eq:PbApprox} to deduce
    \[
    -\int_\Omega u_\varepsilon \varphi\operatorname{div} z_{u_{\varepsilon}} = \int_\Omega \lambda h_{\varepsilon}(u_\varepsilon) u_\varepsilon f_\varepsilon \varphi + \int_\Omega g(x,u_\varepsilon) u_\varepsilon \varphi.
    \]
    Using~\eqref{eq:Pairing} this is equivalent to
    \begin{equation*}
        \int_\Omega (z_{u_{\varepsilon}},Du_\varepsilon) \varphi + \int_\Omega u_\varepsilon\, z_{u_{\varepsilon}}\cdot \nabla\varphi = \int_\Omega \lambda h_{\varepsilon}(u_\varepsilon)u_\varepsilon f_\varepsilon \varphi + \int_\Omega g(x,u_\varepsilon) u_\varepsilon \varphi.  
    \end{equation*}
    
    We want to pass to the limit as $\varepsilon\to 0^+$. In the first term we take into account that $(z_{u_{\varepsilon}},Du_\varepsilon) = |Du_\varepsilon|$ as measures in $\Omega$ and then we use the lower semicontinuity with respect to the $L^1$-convergence of the functional $u\mapsto \int_\Omega \varphi|Du|$. In the second term we pass to the limit simply using that $u_\varepsilon\to u$ in $L^1(\Omega)$ and $z_{u_{\varepsilon}}\to z$ *-weakly in $L^\infty(\Omega)^N$. With respect to the third term, we use Scheff\'{e}'s Lemma to strengthen the convergence~\eqref{eq:Proof_Main_5} to a strong convergence in $L^1(\Omega)$ and thus one easily passes to the limit due to boundedness of $u_\varepsilon$ in $L^\infty(\Omega)$. Finally, one deals with the last term in a simple way. Then, one gets 
    \begin{equation}
    \label{eq:Proof_Main_10}
    \int_\Omega \varphi |Du| \leq -\int_\Omega uz\cdot \nabla \varphi + \int_\Omega \lambda h(u) u f \varphi + \int_\Omega g(x,u) u \varphi.
    \end{equation}

    By Step 3, we can take $u\varphi\in BV(\Omega)\cap L^\infty(\Omega)$ as test function in~\eqref{eq:PbMain} to deduce that
    \[
    -\int_\Omega \varphi u \operatorname{div} z = \int_\Omega \lambda h(u)u f\varphi + \int_\Omega g(x,u)u \varphi,
    \]
    which together with~\eqref{eq:Proof_Main_10} yields, using~\eqref{eq:Pairing}, to
    \[
    \int_\Omega \varphi |Du| \leq \int_\Omega (z,Du)\varphi,\ \forall 0\leq \varphi\in C_c^1(\Omega).
    \]
    Since the reverse inequality is immediate as $\|z\|_{L^\infty(\Omega)^N}\leq 1$, we have that
    \[
    \int_\Omega \varphi |Du| = \int_\Omega (z,Du)\varphi,\ \forall 0\leq \varphi\in C_c^1(\Omega).
    \]
    Therefore, we get
    \[
    \int_\Omega \varphi |Du| = \int_\Omega (z,Du)\varphi, \forall \varphi\in C_c^1(\Omega)
    \]
    and thus we conclude that $(z,Du)=|Du|$ as measures in $\Omega$.

	\smallskip
	
    \textbf{Step 5.} Boundary condition \eqref{def:bordo}.

    Let us define $\sigma:=\max\{1,\gamma\}$. We take  $u_\varepsilon^\sigma \in BV(\Omega) \cap L^\infty(\Omega)$ as test function in~\eqref{eq:PbApprox} to get
    \[
    -\int_\Omega u_\varepsilon^\sigma \operatorname{div} z_{u_{\varepsilon}} = \int_\Omega \lambda h_{\varepsilon}(u_\varepsilon) u_\varepsilon^\sigma f_\varepsilon + \int_\Omega g(x,u_\varepsilon) u_\varepsilon^\sigma.
    \]

    Taking into account that it follows from Lemma~\ref{lem:Composition} that $(z_{u_{\varepsilon}},u_\varepsilon^\sigma)=|Du_\varepsilon^\sigma|$ as measures in $\Omega$ and that $u_\varepsilon^\sigma [z_{u_{\varepsilon}},\nu] = -u_\varepsilon^\sigma$ $\mathcal{H}^{N-1}$-a.e. on $\partial\Omega$, we can apply~\eqref{eq:Green} to obtain
    \begin{equation}
    \label{eq:Proof_Main_11}
    \int_\Omega |Du_\varepsilon^\sigma| + \int_{\partial \Omega} u_\varepsilon^\sigma \ \dH = \int_\Omega \lambda h_{\varepsilon}(u_\varepsilon) u_\varepsilon^\sigma f_\varepsilon + \int_\Omega g(x,u_\varepsilon) u_\varepsilon^\sigma.
    \end{equation}

    The choice of $\sigma$ and the boundedness of $u_\varepsilon$ in $L^\infty(\Omega)$ allow us, taking into account~\eqref{eq:hyp_h} and~\eqref{eq:hyp_g}, to pass to the limit as $\varepsilon\to 0^+$ on the right-hand of~\eqref{eq:Proof_Main_11} using the Lebesgue Theorem. Using the lower semicontinuity with respect to the $L^1(\Omega)$-convergence of the $BV(\Omega)$-norm in the left-hand of~\eqref{eq:Proof_Main_11}, we deduce
    \[
    \int_\Omega |Du^\sigma| + \int_{\partial\Omega} u^\sigma \ \dH \leq \int_\Omega \lambda h(u)u^\sigma f + \int_\Omega g(x,u) u^\sigma = -\int_\Omega u^\sigma \operatorname{div}z = \int_\Omega (z,Du^\sigma) - \int_{\partial\Omega} u^\sigma[z,\nu] \ \dH.
    \]
    Using that $(z,Du^\sigma)=|Du^\sigma|$ as measures in $\Omega$, we get
    \[
    \int_{\partial\Omega} u^\sigma(1+[z,\nu]) \ \dH \leq 0.
    \]
    Since $\|[z,\nu]\|_{L^\infty(\partial\Omega)}\leq 1$, we deduce that $u^\sigma(1+[z,\nu])=0$ $\mathcal{H}^{N-1}$-a.e. on $\partial\Omega$ and, as $u\in BV(\Omega)$, we conclude that $u(1+[z,\nu])=0$ $\mathcal{H}^{N-1}$-a.e. on $\partial\Omega$.
\end{proof}

\subsection{Non-existence for $\lambda$ large}\mbox{}
\medskip

In order to prove the non-existence result, we will make use of the first eigenfunction of the 1-Laplacian which, recall, is briefly presented in Section \ref{sec:eigenvalue}. Here the key is to ``put'' the right-hand of~\eqref{eq:PbMain} over $\lambda_{1,B}$, with $\lambda_{1,B}$ being the first eigenvalue for the Dirichlet 1-Laplacian on some ball $B\subset\subset \Omega$. 

\begin{proof}[Proof of Theorem~\ref{th:CC_nonexist}]
    \mbox{}
    Let $B$ be the ball on which hypothesis~\eqref{eq:hyp_g_nonex} holds and let $\lambda_{1,B}$ be the first eigenvalue of the 1-Laplacian in $B$. We denote by $\phi$ a positive eigenfunction associated to $\lambda_{1,B}$ in $B$. Observe that $\phi$ satisfies~\eqref{eq:PbEigen} in $B$ and that $\phi$ is a positive constant in $B$, namely $\phi\equiv \alpha>0$ in $B$. Without changing the notation, we extend $\phi$ to $\Omega$ by assigning it the value 0 in $\Omega\setminus B$. Then, we have $\left.\phi\right|_{\partial\Omega}=0$.

    Arguing by contradiction, assume that there exists a solution $u$ of~\eqref{eq:PbMain} for some $\lambda>0$ with associated vector field $z_u$. Taking $\phi$ as test function in~\eqref{eq:PbMain}, we get
    \begin{equation}\label{eq:Proof_NonEx_1}
        -\int_\Omega \phi \operatorname{div}z_u = \int_\Omega \lambda h(u)f\phi + \int_\Omega g(x,u)\phi = \alpha \int_B \lambda h(u)f + \alpha \int_B g(x,u).
    \end{equation}

    Let us focus on the left-hand of~\eqref{eq:Proof_NonEx_1}. Taking into account that $(z_u,D\phi)\leq |D\phi|$ as measures in $\Omega$ and that $(z_\phi, D \phi)=|D\phi|$ as measures in $B$, where $z_\phi$ is the vector field associated to $\phi$, we obtain
    \begin{equation}
        \begin{split}\label{eq:Proof_NonEx_2}
        -\int_\Omega \phi \operatorname{div}z_u 
        &\overset{\eqref{eq:Green}}{=} \int_\Omega (z_u,D\phi) \leq \int_\Omega |D\phi| 
        = \int_B |D\phi| + \int_{\partial B} \phi \ \dH
        = \int_B (z_\phi,D\phi) + \int_{\partial B} \phi \ \dH \\
        &\overset{\eqref{eq:Green}}{=} -\int_B \phi \operatorname{div}z_\phi + \int_{\partial B} \phi[z_\phi,\nu] \ \dH + \int_{\partial B} \phi \ \dH
        \\
        &= -\int_B \phi \operatorname{div}z_\phi + \int_{\partial B} \phi(1+[z_\phi,\nu]) \ \dH.
        \end{split}
    \end{equation}
    On the one hand, since $\phi$ satisfies~\eqref{eq:PbEigen} and $\phi\equiv \alpha$ in $B$, we have $-\int_B \phi \operatorname{div} z_\phi = \alpha \lambda_{1,B} |B|$. On the other hand, we know that $\phi(1+[z_\phi,\nu])=0 \text{ $\mathcal{H}^{N-1}$-a.e. on }  \partial\Omega$. Thus, from~\eqref{eq:Proof_NonEx_2} we deduce that
    \begin{equation}\label{eq:Proof_NonEx_3}
        -\int_\Omega \phi \operatorname{div}z_u \leq \alpha \lambda_{1,B} |B|.
    \end{equation}
    With respect to the right-hand  of~\eqref{eq:Proof_NonEx_1}, using \eqref{eq:hyp_g_nonex}, we have that
    \begin{equation} \label{eq:Proof_NonEx_4}
        \alpha\int_B \lambda h(u)f + \alpha \int_B g(x,u) \geq \alpha \int_B (c\lambda h(u) + \kappa(u)).
    \end{equation}
    From~\eqref{eq:Proof_NonEx_1},~\eqref{eq:Proof_NonEx_3} and~\eqref{eq:Proof_NonEx_4}, we get that
    \begin{equation*}
        \alpha \lambda_{1,B} |B| \geq \alpha \int_B (c \lambda h(u) + \kappa(u)).
    \end{equation*}
    
    Since $\lim_{s\to \infty} \kappa(s) =\infty$, we know that there exists some $s_0>0$ such that $\kappa(s)> \lambda_{1,B}$ for every $s\geq s_0$. Moreover, by~\eqref{eq:hyp_h_nonex} we know that $h(s)$ is greater than a constant $c_0>0$ for every $s\in[0,s_0]$.  Thus, we deduce
    \begin{equation*}
        \begin{split}
        \lambda_{1,B} |B| &\geq \int_B (c\lambda h(u) + \kappa(u)) = \int_{B\cap \{u\leq s_0\}} (c\lambda h(u) + \kappa(u)) + \int_{B\cap \{u > s_0\}} (c\lambda h(u) + \kappa(u))\\
        &\geq c \int_{B\cap \{u\leq s_0\}} \lambda h(u) + \int_{B\cap \{u > s_0\}} \kappa(u) \geq c c_0 \lambda |B\cap\{u \leq s_0\}| + \lambda_{1,B} |B\cap\{u > s_0\}|.
        \end{split}
    \end{equation*}
    Since $u$ is almost everywhere finite, one can always fix $s_0$ large enough in order to have $|B\cap\{u \leq s_0\}|>0$. Taking $\tilde\lambda =\frac{\lambda_{1,B}}{c c_0}$, for every $\lambda>
    \tilde\lambda$ we have that
    \begin{equation*}
        \lambda_{1,B} |B| \geq c c_0 \lambda |B\cap\{u \leq s_0\}| + \lambda_{1,B} |B\cap\{u > s_0\}| > \lambda_{1,B} (|B\cap\{u \leq s_0\}| + |B\cap\{u > s_0\}|) = \lambda_{1,B} |B|
    \end{equation*}
    and this is a contradiction. Therefore, for $\lambda\geq\tilde\lambda$ no solution to~\eqref{eq:PbMain} can exist.
\end{proof}

\appendix

\section{A density result}
\label{sec:app1}

Here we state a density result proved in~\cite[Theorem~2]{Dem1}. For the sake of completeness, we include here its proof.  

\begin{proposition}
\label{prop:MP_Dens}
     Suppose that $u\in BV(\Omega)$ is almost 1-harmonic with associated vector field $z$. Then, there exists an associated vector field $\tilde z$ with $\operatorname{div} \tilde z = \operatorname{div} z$ and a sequence $u_\varepsilon$ with $u_\varepsilon\in W_0^{1,1+\varepsilon}(\Omega)$ such that
    \begin{enumerate}[i)]
        \item $u_\varepsilon\to  u$ in $L^{\frac{N}{N-1}}(\Omega)$,\\[-1.5mm]
        \item $\int_\Omega |\nabla u_\varepsilon|^{1+\varepsilon} \to \int_\Omega |Du| + \int_{\partial\Omega} |u|\ \dH$,\\[-1.5mm]
        \item $|\nabla u_\varepsilon|^{\varepsilon-1} \nabla u_\varepsilon \rightharpoonup \tilde z$ in $L^q(\Omega)$ for every $q<\infty$,\\[-1.5mm]
        \item $\operatorname{div}\big(|\nabla u_\varepsilon|^{\varepsilon-1} \nabla u_\varepsilon \big) \to \operatorname{div} \tilde z$ in $L^N(\Omega)$.
    \end{enumerate}
\end{proposition}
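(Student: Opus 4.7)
\medskip
\textbf{Proof sketch (plan).} The natural approach is the classical $(1+\varepsilon)$-Laplace approximation. Set $f := -\operatorname{div} z \in L^N(\Omega)$, and define $u_\varepsilon \in W_0^{1,1+\varepsilon}(\Omega)$ as the unique weak solution to
\begin{equation*}
-\Delta_{1+\varepsilon} u_\varepsilon = f \ \text{ in } \Omega, \qquad u_\varepsilon = 0 \ \text{ on } \partial\Omega,
\end{equation*}
which exists by standard monotone methods, and put $\tilde z_\varepsilon := |\nabla u_\varepsilon|^{\varepsilon-1}\nabla u_\varepsilon$. The crucial observation is that by construction $\operatorname{div}\tilde z_\varepsilon = -f = \operatorname{div} z$ identically for every $\varepsilon>0$, so that (iv) is obtained \emph{for free} for any weak-$\ast$ limit $\tilde z$: we merely have to produce $\tilde z$ with $\operatorname{div}\tilde z = \operatorname{div} z$, and the convergence in $L^N$ is trivial as the divergences form a constant sequence equal to $-f$.

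\medskip
The key a priori estimate comes from testing the equation with $u_\varepsilon$ itself and integrating by parts against $z$ (which is legitimate since $u_\varepsilon$ has zero trace, $z\in L^\infty$, and $\operatorname{div} z\in L^N$):
\begin{equation*}
\int_\Omega |\nabla u_\varepsilon|^{1+\varepsilon} = \int_\Omega f u_\varepsilon = -\int_\Omega u_\varepsilon\operatorname{div} z = \int_\Omega z\cdot\nabla u_\varepsilon \le \int_\Omega |\nabla u_\varepsilon|.
\end{equation*}
Combined with H\"older, this yields $\int_\Omega |\nabla u_\varepsilon|^{1+\varepsilon}\le |\Omega|^{\varepsilon/(1+\varepsilon)}\cdot (\int_\Omega |\nabla u_\varepsilon|^{1+\varepsilon})^{1/(1+\varepsilon)}$, hence a uniform bound on the $(1+\varepsilon)$-energy, and in particular on $\int_\Omega |\nabla u_\varepsilon|$. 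Since $|\tilde z_\varepsilon| = |\nabla u_\varepsilon|^\varepsilon$, a further H\"older estimate gives $\tilde z_\varepsilon$ bounded in every $L^q(\Omega)$, $q<\infty$, with $\limsup_\varepsilon \|\tilde z_\varepsilon\|_{L^q}\le |\Omega|^{1/q}$. Thus, up to subsequences, $u_\varepsilon\to u$ in $L^{N/(N-1)}(\Omega)$ by BV compactness (establishing (i)) and $\tilde z_\varepsilon\rightharpoonup \tilde z$ in $L^q$ for every $q<\infty$ (establishing (iii)), with $\|\tilde z\|_{L^\infty(\Omega)^N}\le 1$.

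\medskip
Property (ii) is then read off the fundamental identity $\int_\Omega |\nabla u_\varepsilon|^{1+\varepsilon} = \int_\Omega f u_\varepsilon$: since $f\in L^N$ and $u_\varepsilon\to u$ in $L^{N/(N-1)}$, the right-hand side passes to $\int_\Omega f u$, and applying the Green formula \eqref{eq:Green} to $-\operatorname{div} z = f$ tested with $u$, together with the pairing $(z,Du)=|Du|$ and the boundary condition $|u|+u[z,\nu]=0$, one has
\begin{equation*}
\int_\Omega f u = \int_\Omega (z,Du) - \int_{\partial\Omega} u[z,\nu]\,\dH = \int_\Omega |Du| + \int_{\partial\Omega} |u|\,\dH,
\end{equation*}
which is precisely the limit required in (ii).

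\medskip
The most delicate point, which I regard as the principal obstacle, is the identification of the limit of $u_\varepsilon$ with the prescribed $u$ (rather than with some other almost 1-harmonic function having divergence $-f$): the limiting 1-Laplace problem with datum $f$ generally admits multiple BV-solutions, and one must argue that the $(1+\varepsilon)$-approximation, coupled with the specific information encoded in the given $z$ (namely the pairing and the boundary trace condition through the almost 1-harmonic hypothesis), selects precisely $u$. The expected mechanism is a comparison/rigidity argument leveraging the $L^N$-regularity of $\operatorname{div} z$ together with the lower semicontinuity of the 1-energy, so that the limit function must coincide with $u$ up to the flexibility of choosing $\tilde z\ne z$ with the same divergence.
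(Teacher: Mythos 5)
Your plan founders on exactly the point you flag as ``the principal obstacle'', and no comparison/rigidity argument can close it: if you define $u_\varepsilon$ as the solution of the \emph{unpenalized} problem $-\Delta_{1+\varepsilon}u_\varepsilon=f$ with $f=-\operatorname{div}z$, the sequence $u_\varepsilon$ (up to subsequences) converges to \emph{one} particular $BV$ limit, while the data $f$ in general admit many distinct almost 1-harmonic functions with associated fields having the same divergence. The eigenvalue problem of Section~\ref{sec:eigenvalue} already shows this: for $f\equiv\lambda_1(\Omega)$ both $u\equiv 0$ and every multiple of $\chi_C$ ($C$ a Cheeger set) are almost 1-harmonic with $\operatorname{div}z=-\lambda_1$, so a single approximating scheme depending only on $f$ cannot produce a sequence converging to an arbitrarily prescribed such $u$. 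The $L^N$-regularity of $\operatorname{div}z$ does not restore uniqueness, so the identification step is not a technical refinement but a genuinely missing mechanism. The paper's proof resolves this by building the prescribed $u$ into the approximation: $u_\varepsilon$ is taken as the minimizer of the penalized functional in \eqref{eq:Proof_Dens_2}, whose term $\frac{N-1}{N}\int_\Omega|v-u|^{\frac{N}{N-1}}$ forces $u_\varepsilon\to u$ strongly in $L^{\frac{N}{N-1}}(\Omega)$ and simultaneously yields \textit{ii)}. The price is that the Euler--Lagrange equation \eqref{eq:Proof_Dens_11} contains the extra term $|u_\varepsilon-u|^{\frac{N}{N-1}-2}(u_\varepsilon-u)$, so \textit{iv)} is no longer ``for free'' as in your scheme, but it follows easily since this term tends to $0$ in $L^N(\Omega)$.

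A second, smaller omission: the proposition asks that $\tilde z$ be an \emph{associated} vector field for $u$, i.e.\ one must also verify $(\tilde z,Du)=|Du|$ as measures and $|u|+u[\tilde z,\nu]=0$ $\mathcal{H}^{N-1}$-a.e.\ on $\partial\Omega$, not merely $\|\tilde z\|_{L^\infty(\Omega)^N}\le 1$ and $\operatorname{div}\tilde z=\operatorname{div}z$. In the paper this is done at the end of the proof by testing \eqref{eq:Proof_Dens_11} with $u_\varepsilon\varphi$ and passing to the limit (for the pairing), and then testing \eqref{eq:Proof_Dens_12} with $u$ and using \eqref{eq:Green} (for the boundary condition); your sketch stops before either verification. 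Your preliminary energy estimate and the H\"older bound giving boundedness of $|\nabla u_\varepsilon|^{\varepsilon-1}\nabla u_\varepsilon$ in every $L^q(\Omega)$ are fine and essentially identical to \eqref{eq:Proof_Dens_9}, but they are not the crux of the result.
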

\begin{proof}
By density, one can take for every $\varepsilon>0$ a function $v_\varepsilon\in W_0^{1,1+\varepsilon}(\Omega)$ such that
\begin{equation}
    \label{eq:Proof_Dens_1}
    \left| \int_\Omega |\nabla v_\varepsilon|^{1+\varepsilon} - \int_\Omega |Du| - \int_{\partial\Omega} |u|\ \dH\right| < \varepsilon \quad \text{and} \quad \int_\Omega |v_\varepsilon-u|^{\frac{N}{N-1}} < \varepsilon.
\end{equation}
We denote $f:=-\operatorname{div} z \in L^N(\Omega)$ and we define
\begin{equation}
    \label{eq:Proof_Dens_2}
    \mu_\varepsilon := \inf_{v\in W_0^{1,1+\varepsilon}(\Omega)} \left\{ \frac{1}{1+\varepsilon}\int_\Omega |\nabla v|^{1+\varepsilon} + \frac{N-1}{N} \int_\Omega |v-u|^{\frac{N}{N-1}} - \int_\Omega f v \right\}.
\end{equation}

Since the functional associated to this minimizing problem is weakly lower semicontinuous and coercive, the infimum in~\eqref{eq:Proof_Dens_2} is attained at some $u_\varepsilon\in W_0^{1,1+\varepsilon}(\Omega)$. We will prove that $u_\varepsilon$ is a sequence that satisfies all the convergences appearing in the statement of the theorem.

First, using~\eqref{eq:Proof_Dens_1}, from~\eqref{eq:Proof_Dens_2} we get that
\begin{equation}
\begin{aligned}
\label{eq:Proof_Dens_3}
    \limsup_{\varepsilon\to0^+}\mu_\varepsilon &\leq \lim_{\varepsilon\to0^+} \left( \frac{1}{1+\varepsilon} \int_\Omega |\nabla v_\varepsilon|^{1+\varepsilon} + \frac{N-1}{N} \int_\Omega |v_\varepsilon-u|^{\frac{N}{N-1}} - \int_\Omega f v_\varepsilon \right) 
    \\
    &= \int_\Omega |Du| + \int_{\partial\Omega}|u| \ \dH - \int_\Omega f u = 0,
\end{aligned}
\end{equation}
where the last equality is due to
\[
\int_\Omega f u = -\int_\Omega u \operatorname{div} z \overset{\eqref{eq:Green}}{=} \int_\Omega (z,Du) - \int_{\partial \Omega} u[z,\nu] \ \dH = \int_\Omega |Du| + \int_{\partial\Omega}|u| \ \dH.
\]

Next, we prove that $u_\varepsilon$ is bounded in $BV(\Omega)$. Using Young's and H\"{o}lder's inequalities we obtain 
\begin{equation}
\begin{split}
\label{eq:Proof_Dens_4}
    \int_\Omega f u_\varepsilon = \int_\Omega f(u_\varepsilon - u) + \int_\Omega fu 
    & \leq \frac{N-1}{{2}N} \int_\Omega |u_\varepsilon-u|^{\frac{N}{N-1}} + \frac{2^{N-1}}{N} \|f\|^N_{L^N(\Omega)} + \|f\|_{L^N(\Omega)} \|u\|_{L^{\frac{N}{N-1}}(\Omega)}\\
    &:=  \frac{N-1}{{2}N} \int_\Omega |u_\varepsilon-u|^{\frac{N}{N-1}} + C.
\end{split}
\end{equation}
As $u_\varepsilon$ minimizes~\eqref{eq:Proof_Dens_2}, we get
\begin{equation}
\label{eq:Proof_Dens_5}
    \frac{1}{1+\varepsilon}\int_\Omega |\nabla u_\varepsilon|^{1+\varepsilon} + \frac{N-1}{N} \int_\Omega |u_\varepsilon-u|^{\frac{N}{N-1}} - \int_\Omega fu_\varepsilon = \mu_\varepsilon
\end{equation}
and, taking into account~\eqref{eq:Proof_Dens_4}, we have
\begin{equation}
\label{eq:Proof_Dens_8}
\frac{1}{1+\varepsilon}\int_\Omega |\nabla u_\varepsilon|^{1+\varepsilon} + \frac{N-1}{{2}N} \int_\Omega |u_\varepsilon-u|^{\frac{N}{N-1}} \leq \mu_\varepsilon + C.
\end{equation}
Since $\mu_\varepsilon$ is upper bounded by ~\eqref{eq:Proof_Dens_3}, this implies that $u_\varepsilon$ is bounded in $BV(\Omega)$. Then, $u_\varepsilon$ has a subsequence, not relabeled, such that $\nabla u_\varepsilon \to Dv$ *-weakly in $\mathcal{M}(\Omega)^N$ for some $v\in BV(\Omega)$; moreover, $u_\varepsilon \rightharpoonup v$ in $L^{\frac{N}{N-1}}(\Omega)$. Taking limits in~\eqref{eq:Proof_Dens_5}, by the lower semicontinuity of the $BV(\Omega)$ norm and by~\eqref{eq:Proof_Dens_3} we deduce
\begin{equation}
\label{eq:Proof_Dens_6}
    \int_\Omega |Dv| + \int_{\partial\Omega} |v| \ \dH + \frac{N-1}{N} \int_\Omega |v-u|^{\frac{N}{N-1}} - \int_\Omega fv \leq \limsup_{\varepsilon\to 0^+} \mu_\varepsilon\leq 0.
\end{equation}
Observe that one always has
\begin{equation}
\label{eq:Proof_Dens_7}
    \int_\Omega fv = -\int_\Omega v \operatorname{div}  z = \int_\Omega ( z,Dv) - \int_{\partial\Omega} v [z,\nu] \ \dH \leq \int_\Omega |Dv| + \int_{\partial\Omega} |v| \ \dH,
\end{equation}
where we have used~\eqref{eq:Green} and $\| z\|_{L^\infty(\Omega)^N}\leq 1$. Joining~\eqref{eq:Proof_Dens_6} and~\eqref{eq:Proof_Dens_7} we obtain, first, that $\int_\Omega |v-u|^{\frac{N}{N-1}}=0$ and thus $v=u$, and then that $\int_\Omega |\nabla u_\varepsilon|^{1+\varepsilon} \to \int_\Omega |Du| + \int_{\partial\Omega} |u| \ \dH$. This proves \textit{i)} and \textit{ii)}.

Now, we focus our attention on proving \textit{iii)}. Since $\into |\nabla u_\varepsilon|^{1+\varepsilon}$ is uniformly bounded by~\eqref{eq:Proof_Dens_8}, using the Holder inequality we have for every $1\leq q<\frac{1+\varepsilon}{\varepsilon}$ that
\begin{equation}
    \label{eq:Proof_Dens_9}
    \into \big| |\nabla u_\varepsilon|^{\varepsilon-1} \nabla u_\varepsilon \big|^q = \into |\nabla u_\varepsilon|^{\varepsilon q} \leq \left( \into |\nabla u_\varepsilon|^{1+\varepsilon} \right)^{\frac{\varepsilon q}{1+\varepsilon}} |\Omega|^{1-\frac{\varepsilon q}{1+\varepsilon}} \leq C^{\frac{\varepsilon q}{1+\varepsilon}} |\Omega|^{1-\frac{\varepsilon q}{1+\varepsilon}}.
\end{equation}
This implies that $|\nabla u_\varepsilon|^{\varepsilon-1}\nabla u_\varepsilon$ is bounded in $L^q(\Omega)^N$ with respect to $\varepsilon$. Then, there exists $\tilde z_q\in L^q(\Omega)^N$ such that, up to subsequences,
\[
|\nabla u_\varepsilon|^{\varepsilon-1}\nabla u_\varepsilon \rightharpoonup \tilde z_q \text{ weakly in }L^q(\Omega)^N.
\]
A standard diagonal argument shows that there exists a vector field $\tilde z$ independent of $q$ such that, up to subsequences,
\begin{equation}
    \label{eq:Proof_Dens_10}
    |\nabla u_\varepsilon|^{\varepsilon-1}\nabla u_\varepsilon \rightharpoonup \tilde z \text{ weakly in }L^q(\Omega)^N, \forall q<\infty.
\end{equation}
Taking limits as $\varepsilon\to 0^+$, it follows from the weak lower semicontinuity in~\eqref{eq:Proof_Dens_9} that
\[
\|\tilde z\|_{L^q(\Omega)^N} \leq |\Omega|^{\frac{1}{q}},
\]
and thus taking limits as $q\to\infty$ we get that $\|\tilde z\|_{L^\infty(\Omega)^N} \leq 1$.

In order to prove $iv)$, observe that since functions $u_\varepsilon$ are solutions to the minimization problem~\eqref{eq:Proof_Dens_2}, they satisfy (recall $f=-\operatorname{div}z$) the associated Dirichlet problem to the equation
\begin{equation}
\label{eq:Proof_Dens_11}
    -\operatorname{div}(|\nabla u_\varepsilon|^{\varepsilon-1} \nabla u_\varepsilon) + |u_\varepsilon - u|^{\frac{N}{N-1}-2} (u_\varepsilon - u) = -\operatorname{div}z.
\end{equation}
By~\eqref{eq:Proof_Dens_10} and since $u_\varepsilon\rightharpoonup u$ in $L^{\frac{N}{N-1}}(\Omega)$, one can easily pass to the limit in the weak formulation of~\eqref{eq:Proof_Dens_11} to get
\begin{equation}
\label{eq:Proof_Dens_12}
    -\operatorname{div}\tilde z = -\operatorname{div}z
\end{equation}
and, as $|u_\varepsilon - u|^{\frac{N}{N-1}-2} (u_\varepsilon - u)\to 0$ in $L^N(\Omega)$, one also has
\[
-\operatorname{div}(|\nabla u_\varepsilon|^{\varepsilon-1} \nabla u_\varepsilon) \to -\operatorname{div} z = -\operatorname{div} \tilde z \quad \text{ in } L^N(\Omega).
\]

It just remains to prove that $\tilde z$ is an associated vector field to $u$. First, given $0\leq \varphi\in C_c^1(\Omega)$ we multiply~\eqref{eq:Proof_Dens_11} by $u_\varepsilon \varphi$ getting that
\begin{equation*}
    \into |\nabla u_\varepsilon|^{1+\varepsilon}\varphi + \into |\nabla u_\varepsilon|^{\varepsilon-1} \nabla u_\varepsilon \nabla \varphi \ u_\varepsilon + \into |u_\varepsilon - u|^{\frac{N}{N-1}-2} (u_\varepsilon - u) u_\varepsilon \varphi = - \into u_\varepsilon\varphi \operatorname{div} \tilde z.
\end{equation*}
We can easily pass to the limit in the last three integrals and, using the lower semicontinuity in the first one, we obtain
\begin{equation*}
    \into |Du|\varphi + \into u \tilde z \cdot \nabla \varphi \leq - \into u\varphi \operatorname{div} \tilde z \overset{\eqref{eq:Pairing}}{=} \int_\Omega (\tilde z, Du) \varphi + \into u \tilde z \cdot \nabla \varphi.
\end{equation*}
In this way, $|Du|\leq (\tilde z,Du)$ as measures in $\Omega$, and, since $\|\tilde z\|_{L^\infty(\Omega)^N} \leq 1$, we conclude that $|Du| = (\tilde z,Du)$ as measures in $\Omega$. Finally, multiplying~\eqref{eq:Proof_Dens_12} by $u$ and using~\eqref{eq:Green}, we have
\[
\into (\tilde z, Du) + \int_{\partial \Omega} u[\tilde z,\nu] \ \dH = \into (z, Du) + \int_{\partial \Omega} u[ z,\nu] \ \dH.
\]
Since $\int_{\partial \Omega} u[z,\nu] \ \dH = - \int_{\partial \Omega}|u| \ \dH$ and $(\tilde z, Du) = (z,Du) = |Du|$ as measures in $\Omega$, we can say that
\[
\int_{\partial \Omega} \big( |u| + u[\tilde z,\nu] \big) \ \dH = 0
\]
and this implies $|u|+u[\tilde z,\nu]=0$ $\mathcal{H}^{N-1}$-a.e. on $\partial\Omega$ since $\|[\tilde z,\nu]\|_{L^\infty(\partial\Omega)}\leq 1$.
\end{proof}

\section{An auxiliary problem with an absorption term}
\label{sec:app2}

With the aim of providing a self-contained work, here we present with the existence of a solution to
\begin{equation}
    \label{eq:PbAbs}
	\begin{cases}
		\dis -\Delta_1 u + l(x,u) = f(x) & \text{in}\;\Omega,\\
		u=0 & \text{on}\;\partial\Omega,
	\end{cases}
\end{equation} 
with $f$ belonging to $L^N(\Omega)$ and $l\colon \Omega\times \R\to \R$ being a general Carathe\'{o}dory function. Let us emphasize that this existence result is the key for proving Theorem~\ref{th:SS}. As we will see, the presence of the absorption term has a regularizing effect, allowing the existence of solution without any smallness assumption on $f$ (to be comapred with~\cite{CT, MST1} where $l=0$ and with \cite{O} where an absorption term appears).
\medskip

More specifically, on the absorption term we assume that
\begin{equation}
    \label{eq:hyp_abs_l_inf}
    \lim_{s\to \pm \infty} l(x,s) = \pm \infty \text{ uniformly in } x \quad \text{and} \quad l(x,s)s \geq 0,\ \forall s\in\R
\end{equation}
and that for every $s_0>0$ one has  that
\begin{equation}
    \label{eq:hyp_abs_l_reg}
    \max_{s\in[-s_0,s_0]}|l(x,s)|\in L^{1}(\Omega).
\end{equation}

 Observe that, when $l(x,s)$ is non-decreasing, hypothesis~\eqref{eq:hyp_abs_l_reg} simply becomes $l(x,s)\in L^1(\Omega)$ for all $s\in \R$ fixed. To prove existence for~\eqref{eq:PbAbs}, the strategy is approximate by $p$-Laplacian type problems and then tend $p$ to 1. Therefore, for $p>1$ we consider the approximated problems
\begin{equation}
    \label{eq:PbAbsApprox}
	\begin{cases}
		\dis -\Delta_p u_p + l(x,u_p) = f & \text{in}\;\Omega,\\
		u_p=0 & \text{on}\;\partial\Omega.
	\end{cases}
\end{equation} 

By standard arguments, one can show that these problems~\eqref{eq:PbAbsApprox} have a solution $u_p\in W_0^{1,p}(\Omega)\cap L^\infty(\Omega)$. Indeed, one can proceed as follows. First, one approaches~\eqref{eq:PbAbsApprox} by truncating $l(x,s)$; the existence of solution in the $W_0^{1,p}(\Omega)$-space for these new approximated problems follows immediately by a fixed point argument (observe that $N>(p^*)'$). Then, Stampacchia's Theorem (see that $N>\frac{N}{p}$) gives a uniform $L^\infty(\Omega)$-bound that allows to pass to the limit thanks to~\eqref{eq:hyp_abs_l_reg}, getting rid of the truncation on $l(x,s)$ and obtaining thus a solution to~\eqref{eq:PbAbsApprox} in $W_0^{1,p}(\Omega)\cap L^\infty(\Omega)$.
\medskip

In the following we show the existence of a bounded solution to~\eqref{eq:PbAbs}. Since most of the proof is similar to the one of Theorem~\ref{th:SS}, we will only emphasize the points where the major differences appear.

\begin{theorem}
\label{th:Abs}
    Let $f\in L^N(\Omega)$ and assume that $l(x,s)$ verify~\eqref{eq:hyp_abs_l_inf} and~\eqref{eq:hyp_abs_l_reg}. Then, there exists a solution $u\in BV(\Omega) \cap L^\infty(\Omega)$ to problem~\eqref{eq:PbAbs} with $l(x,u)\in L^1(\Omega)$.
\end{theorem}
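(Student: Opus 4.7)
The plan is to let $p\to 1^+$ in the approximating $p$-Laplace problems \eqref{eq:PbAbsApprox}, whose solutions $u_p\in W_0^{1,p}(\Omega)\cap L^\infty(\Omega)$ are granted as described above. Setting $z_p:=|\nabla u_p|^{p-2}\nabla u_p$, the goal is to extract, up to subsequences, a limit pair $(u,z)$ with $u_p\to u$ in $L^q(\Omega)$ for $1\le q<\infty$ and $z_p\rightharpoonup z$ weakly in $L^q(\Omega)^N$ for every $q<\infty$, which will provide the solution to \eqref{eq:PbAbs}.

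The main obstacle, and the first step, is obtaining a $p$-uniform $L^\infty(\Omega)$ bound for $u_p$; the Stampacchia iteration for the plain $p$-Laplacian alone produces constants that blow up as $p\to 1^+$, so the absorption term $l$ must do the decisive work. I would fix $h>0$ large enough that $\mathcal{S}_1\|f\chi_{\{|f|>h\}}\|_{L^N(\Omega)}<1$ and pick, via \eqref{eq:hyp_abs_l_inf}, $k_0=k_0(h)$ so that $|l(x,s)|\ge h$ whenever $|s|\ge k_0$ uniformly in $x$. Testing \eqref{eq:PbAbsApprox} with $G_k(u_p)$ for $k\geq k_0$ and splitting $\int fG_k(u_p)$ according to $\{|f|\leq h\}$ and $\{|f|>h\}$, in the spirit of Step 3 of Theorem~\ref{th:SS}, gives
\[
\int_\Omega |\nabla G_k(u_p)|^p \leq \|f\chi_{\{|f|>h\}}\|_{L^N(\Omega)}\,\|G_k(u_p)\|_{L^{N/(N-1)}(\Omega)}.
\]
Combining this with the continuous embedding $L^{p^*}\hookrightarrow L^{N/(N-1)}$ and the $W_0^{1,p}$-Sobolev inequality, whose constants converge respectively to $1$ and to $\mathcal{S}_1$ as $p\to 1^+$, a standard Stampacchia iteration (whose coefficient is forced to stay bounded precisely by the smallness assumption on $h$) produces $\|u_p\|_{L^\infty(\Omega)}\le K$ for all $p$ close to $1$.

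Once this bound is available, testing \eqref{eq:PbAbsApprox} with $u_p$ and using $l(x,u_p)u_p\geq 0$ together with $|\int fu_p|\leq \|f\|_{L^1(\Omega)}\|u_p\|_{L^\infty}$ gives $\int|\nabla u_p|^p\leq C$ uniformly, hence a uniform $BV$ bound by H\"older; a compactness argument then yields $u\in BV(\Omega)\cap L^\infty(\Omega)$ with $u_p\to u$ in every $L^q(\Omega)$. For the vector field, the estimate $\int |z_p|^q = \int |\nabla u_p|^{q(p-1)} \leq C^{q(p-1)/p}|\Omega|^{1-q(p-1)/p}$, valid whenever $q<p/(p-1)$, provides a uniform $L^q$ bound for any fixed $q$ once $p$ is near $1$; a diagonal argument produces $z\in L^q(\Omega)^N$ for all $q<\infty$, and since the right-hand side tends to $|\Omega|^{1/q}$ as $p\to 1^+$, the limit satisfies $\|z\|_{L^q(\Omega)^N}\le |\Omega|^{1/q}$, whence $\|z\|_{L^\infty(\Omega)^N}\le 1$.

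The remaining steps follow the blueprint of Theorem~\ref{th:SS}. Passing to the limit in the weak formulation of \eqref{eq:PbAbsApprox} tested against $\varphi\in C_c^1(\Omega)$ is straightforward, with dominated convergence (justified by \eqref{eq:hyp_abs_l_reg} and the $L^\infty$ bound) handling the absorption term and weak $L^q$ convergence handling $z_p$, thus yielding the distributional equation. To identify $(z,Du)=|Du|$, I would test against $u_p\varphi$ for $0\leq\varphi\in C_c^1(\Omega)$ and combine Jensen's inequality $\int|\nabla u_p|^p\varphi\geq (\int\varphi)^{1-p}(\int|\nabla u_p|\varphi)^p$ with the $L^1$-lower semicontinuity of $u\mapsto \int\varphi|Du|$ to get $\liminf_p\int|\nabla u_p|^p\varphi\geq\int\varphi|Du|$; comparison with the limit equation tested against $u\varphi$ closes the gap via \eqref{eq:Pairing}. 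The boundary condition is obtained analogously by testing with $u_p$ itself, viewed in $BV(\R^N)$ extended by zero, so that $\liminf_p\int_\Omega|\nabla u_p|^p\geq\int_\Omega|Du|+\int_{\partial\Omega}|u|\dH$, and then comparing with the limit equation tested against $u$ forces $|u|+u[z,\nu]=0$ $\mathcal{H}^{N-1}$-a.e. on $\partial\Omega$. Finally, $l(x,u)\in L^1(\Omega)$ is immediate from $u\in L^\infty(\Omega)$ and \eqref{eq:hyp_abs_l_reg}.
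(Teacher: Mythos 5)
Your proposal is correct and follows the paper's own strategy in all essentials: approximate by \eqref{eq:PbAbsApprox}, prove a $p$-uniform $L^\infty$ bound using the absorption term plus the smallness of $\|f\chi_{\{|f|>h\}}\|_{L^N(\Omega)}$, deduce the $BV$ bound by testing with $u_p$, and pass to the limit along the blueprint of Theorem~\ref{th:SS} (the paper compresses this last part into a reference to that proof and to~\cite{MST1}; your Jensen-plus-lower-semicontinuity identification of $(z,Du)=|Du|$ via~\eqref{eq:Pairing}, the extension-by-zero treatment of the boundary term, and dominated convergence through~\eqref{eq:hyp_abs_l_reg} are exactly what that blueprint uses). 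The one genuine difference is the implementation of the $L^\infty$ estimate. The paper deliberately avoids the $W_0^{1,p}$ Sobolev inequality: after absorbing $h\int_\Omega|G_k(u_p)|$ it applies Young's inequality, $\int_\Omega|\nabla G_k(u_p)|\le\int_\Omega|\nabla G_k(u_p)|^p+|\{u_p>k\}|$, so that only the $p$-independent $W^{1,1}$ embedding with constant $\mathcal{S}_1$ enters, and the resulting level-set decay~\eqref{eq:Proof_Abs_6} feeds Stampacchia's lemma with constants manifestly independent of $p$. You instead keep the exponent $p$ and invoke $W_0^{1,p}\hookrightarrow L^{p^*}$ together with $L^{p^*}\hookrightarrow L^{N/(N-1)}$; this can be made to work, but the step you leave implicit is precisely the delicate one: carrying out the iteration from $\int_\Omega|\nabla G_k(u_p)|^p\le\|f\chi_{\{|f|>h\}}\|_{L^N(\Omega)}\|G_k(u_p)\|_{L^{N/(N-1)}(\Omega)}$ (with H\"older on the level set, whose exponent is $\frac{N-1}{N}-\frac{1}{p^*}=\frac{p-1}{p}$) yields $\|\nabla G_k(u_p)\|_{L^p(\Omega)}\le A_p^{1/(p-1)}\,|\{|u_p|>k\}|^{1/p}$, where $A_p$ is $\|f\chi_{\{|f|>h\}}\|_{L^N(\Omega)}$ times the two embedding constants; it is only because $A_p\le\theta<1$ for $p$ close to $1$ (strict smallness of the tail of $f$ together with boundedness/convergence of the Talenti constant as $p\to1^+$) that the factor $A_p^{1/(p-1)}\le\theta$ remains bounded — otherwise the $1/(p-1)$ power is exactly where the iteration blows up, as you yourself anticipate. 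So your route is viable and slightly more direct once that estimate is written out, while the paper's Young-inequality detour buys complete independence from any quantitative information on the $p$-dependent Sobolev constant.
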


\begin{proof}
We start proving that $u_p$ is uniformly bounded in $L^\infty(\Omega)$.
We take $G_k(u_p)\in W_0^{1,p}(\Omega)\cap L^\infty(\Omega)$ as test function in~\eqref{eq:PbAbsApprox}, obtaining
\begin{equation}
    \label{eq:Proof_Abs_1}
    \int_\Omega |\nabla G_k(u_p)|^p + \int_\Omega l(x,u_p) G_k(u_p) = \int_\Omega f G_k(u_p).
\end{equation}

Let $h>0$ be a number that will be fixed latex. Due to hypothesis~\eqref{eq:hyp_abs_l_inf}, there exists some $\overline{k}>0$ such that
\begin{equation*}
    \inf_{|s|\in [k,\infty)} |l(x,s)| \geq h,\ \forall k\geq \overline{k}.
\end{equation*}
From now on, we will assume that $k\geq \overline{k}$. Then, for the second integral of~\eqref{eq:Proof_Abs_1} one has
\begin{equation}
    \label{eq:Proof_Abs_2}
    h \int_\Omega |G_k(u_p)| \leq \inf_{|s|\in [k,\infty)} |l(x,s)| \int_\Omega |G_k(u_p)| \leq \int_\Omega l(x,u_p) G_k(u_p).
\end{equation}

For the right-hand side of~\eqref{eq:Proof_Abs_1}, using H\"{o}lder and Sobolev inequalities we have
\begin{equation}
    \label{eq:Proof_Abs_3}
    \begin{split}
    \int_\Omega f G_k(u_p) &\leq \int_{\{|f|\leq h\}} |f| |G_k(u_p)| + \int_{\{|f|>h\}} |f| |G_k(u_p)| \\
    &\leq h \int_\Omega |G_k(u_p)| + \|f \chi_{\{|f|>h\}}\|_{L^N(\Omega)} \|G_k(u_p)\|_{L^{\frac{N}{N-1}}(\Omega)} \\
    &\leq h \int_\Omega |G_k(u_p)| + \|f \chi_{\{|f|>h\}}\|_{L^N(\Omega)} \mathcal{S}_1 \int_\Omega |\nabla G_k(u_p)|.
    \end{split}
\end{equation}

Joining~\eqref{eq:Proof_Abs_1},~\eqref{eq:Proof_Abs_2} and~\eqref{eq:Proof_Abs_3}, we deduce
\begin{equation}
    \label{eq:Proof_Abs_5}
    \int_\Omega |\nabla G_k(u_p)|^p \leq \|f \chi_{\{|f|>h\}}\|_{L^N(\Omega)} \mathcal{S}_1 \int_\Omega |\nabla G_k(u_p)|.
\end{equation}

On the other hand, using Young's inequality we get
\begin{align}
    \label{eq:Proof_Abs_4}
    \int_\Omega |\nabla G_k(u_p)| \leq \frac{1}{p}\int_\Omega |\nabla G_k(u_p)|^p + \frac{p-1}{p} |\{u_p>k\}| \leq \int_\Omega |\nabla G_k(u_p)|^p + |\{u_p>k\}|,
\end{align}
so~\eqref{eq:Proof_Abs_5} and~\eqref{eq:Proof_Abs_4} give
\begin{equation}
    \label{eq:Proof_Abs_7}
    \int_\Omega |\nabla G_k(u_p)| \leq \|f \chi_{\{|f|>h\}}\|_{L^N(\Omega)} \mathcal{S}_1 \int_\Omega |\nabla G_k(u_p)| + |\{u_p>k\}|.
\end{equation}

Now, we fix $h$ large enough in order to have
\begin{equation*}
    \|f \chi_{\{|f|>h\}} \|_{L^N(\Omega)} \mathcal{S}_1 < 1.
\end{equation*}

This allows to deduce from~\eqref{eq:Proof_Abs_7} that it holds
\begin{equation*}
    \int_\Omega |\nabla G_k(u_p)| \leq \frac{|\{u_p>k\}|}{1-\|f \chi_{\{|f|>h\}}\|_{L^N(\Omega)} \mathcal{S}_1},\ \forall k\geq \overline{k}.
\end{equation*}

Using Sobolev's inequality, for $l>k$ one obtains
\[
|l-k| \, |\{u_p>l\}|^\frac{N-1}{N} \leq
\left( \int_\Omega |G_k(u_p)|^\frac{N}{N-1} \right)^\frac{N-1}{N} \leq \frac{ \mathcal{S}_1 |\{u_p>k\}|}{1-\|f \chi_{\{|f|>h\}}\|_{L^N(\Omega)} \mathcal{S}_1},\ \forall l>k\geq \overline{k},
\]
which is equivalent to
\begin{equation}
    \label{eq:Proof_Abs_6}
     |\{u_p>l\}| \leq
    \left( \frac{\mathcal{S}_1}{1-\|f \chi_{\{|f|>h\}}\|_{L^N(\Omega)} \mathcal{S}_1} \right)^\frac{N}{N-1} \frac{|\{u_p>k\}|^\frac{N}{N-1}}{|l-k|^\frac{N}{N-1}}, \ \forall l>k\geq \overline{k}.
\end{equation}

Estimate~\eqref{eq:Proof_Abs_6} allows us to apply Stampacchia's Lemma to deduce that $u_p$ is uniformly bounded in $L^\infty(\Omega)$.

This uniform $L^\infty(\Omega)$-bound of $u_p$ allows us to obtain in a standard way a bound in $BV(\Omega)$. Indeed, taking  $u_p\in W_0^{1,p}(\Omega) $ as test function in~\eqref{eq:PbAbsApprox} and using the sign condition~\eqref{eq:hyp_abs_l_inf} of $l(x,s)$, one gets
\begin{equation*}
\int_\Omega |\nabla u_p|^p \leq \int_\Omega f u_p \leq C,
\end{equation*}
and Young's inequality implies
\begin{equation}
\label{eq:Proof_Abs_8}
\int_\Omega |\nabla u_p| \leq \frac{1}{p} \int_\Omega |\nabla u_p|^p + \frac{p-1}{p} |\Omega| \leq C+|\Omega|.
\end{equation}
The boundedness of $u_p$ in $BV(\Omega)$ follows immediately from~\eqref{eq:Proof_Abs_8} since $u_p$ is also bounded in $L^\infty(\Omega)$.

At this point, one can pass to the limit following the same steps as in Theorem~\ref{th:SS} (see also~\cite[Theorem~4.3]{MST1}).
\end{proof}

\begin{remark}

Observe that this proof can not be carried on when $f$ lies into the wider Marcinkiewicz space $L^{N,\infty}(\Omega)$. The reason is that, in general, $\|f \chi_{\{|f|>h\}} \|_{L^{N,\infty} (\Omega)}$ does not go to zero when $h$ goes to infinity. To see this, it suffices to consider the function $f(x)=|x|^{-1}$ in $\Omega = B_R(0)$. This function is in $L^{N,\infty}(\Omega) \setminus L^N(\Omega)$ and $\|f \chi_{\{|f|>h\}} \|_{L^{N,\infty} (\Omega)}= \omega_N^{1/N}$ for all $h>0$, where $\omega_N$ denotes the measure of the $N$-dimensional unit ball.

Then, one may wonder if this result can be extended for functions $f\in L^{N,\infty}(\Omega)$. The answer is negative, as the following example shows. For $N>2$, consider problem  
\begin{equation}
\label{eq:Prob_Example}
	\begin{cases}
		\dis -\Delta_1 u + u = \frac{M}{|x|} & \text{in}\; B_R(0),\\
		u=0 & \text{on}\;\partial B_R(0),
	\end{cases}
\end{equation} 
where $M>0$. Observe that solutions to this problem must be non-negative, so the boundary condition becomes either $u=0$ or $[z,\nu]=-1$ on $\partial\Omega$. We look for radial decreasing solutions, i.e., $u(x)=g(|x|)$ with $g'(s)<0$ for $0\leq s \leq R$. In this case, $z(x)=\frac{Du}{|Du|} = -\frac{x}{|x|}$ and $-\operatorname{div} z = \frac{N-1}{|x|}$, so the boundary condition is always fulfilled independently of the value of $u$ at the border. Then, the equation becomes
\[
-\operatorname{div}z + u = \frac{N-1}{|x|} + g(|x|) = \frac{M}{|x|}
\]
and thus $u(x) = g(|x|) = \frac{M-N+1}{|x|}$. Therefore, this unbounded function is the unique solution to~\eqref{eq:Prob_Example} in $BV(\Omega)$ when $M>N-1$ (recall that $u$ has to be radially decreasing). 

Finally, we stress that, since for $M<N-1$ function $u\equiv 0$ is a solution to $-\Delta_1 u =\frac{M}{|x|}$ in $B_R(0)$ with Dirichlet boundary conditions, then $u\equiv 0$ is the unique solution to~\eqref{eq:Prob_Example} (see~\cite[Remark~3.3]{CT} or~\cite[Theorem~3.2]{MST1}).
\triang
\end{remark}

\section*{Acknowledgements}
The first author has been funded by Junta de Andaluc\'ia (grant FQM-194),
by the Spanish Ministry of Science and Innovation, Agencia Estatal de Investigaci\'on (AEI)
and Fondo Europeo de Desarrollo Regional (FEDER) (grant PID2021-122122NB-I00) and by
the FPU predoctoral fellowship of the Spanish Ministry of Universities (FPU21/04849).
The second and the third author  have been  partially supported by the Gruppo Nazionale per l’Analisi Matematica, la Probabilità e le loro Applicazioni (GNAMPA) of the Istituto Nazionale di Alta Matematica (INdAM).

\section*{Conflict of interest declaration}

The authors declare no competing interests.

\section*{Data availability statement}
 We do not analyse or generate any datasets, because our work
proceeds within a theoretical and mathematical approach. One
can obtain the relevant materials from the references below.

\end{document}